\newtheorem{theorem}{Theorem}[section]
\newtheorem{proposition}[theorem]{Proposition}
\newtheorem{lemma}[theorem]{Lemma}
\newtheorem{corollary}[theorem]{Corollary}
\theoremstyle{definition}
\newtheorem{definition}[theorem]{Definition}
\newtheorem{assumption}[theorem]{Assumption}
\newtheorem{remark}[theorem]{Remark}
\newcommand{\Z}{\mathbb{Z}}
\newcommand{\N}{\mathbb{N}}
\newcommand{\R}{\mathbb{R}}
\newcommand{\C}{\mathbb{C}}
\newcommand{\D}{\mathbb{D}}
\newcommand{\bd}{\partial \D}
\newcommand{\bD}{\partial \D}
\newcommand{\Hil}{\mathcal{H}}
\newcommand{\mc}{\mathcal{C}}
\newcommand{\mm}{\mathcal{M}}
\newcommand{\ml}{\mathcal{L}}
\DeclareMathOperator{\hdim}{\mathrm{dim}_H}
\newcommand{\bfalpha}{\bm{\alpha}}
\newcommand{\bfa}{\bm{\alpha}}
\renewcommand{\P}{\mathbb{P}}
\newcommand{\E}{\mathbb{E}}
\newcommand{\F}{\mathcal{F}}
\newcommand{\cb}{\mathrm{C}\beta \mathrm{E}}
\newcommand{\Cov}{\operatorname{Cov}}
\newcommand{\onef}{\textbf{1}}
\newcommand{\1}[1]{\onef_{\{ #1 \}}}
\newcommand{\Pl}{\mathbf{P}}
\newcommand{\mua}{\mu_{\bfa}}
\newcommand{\Leb}{\mathrm{Leb}}
\newcommand{\Om}{\Omega}
\renewcommand{\th}{\theta}
\newcommand{\dth}{\mathrm{d} \th}
\newcommand{\dmu}{\mathrm{d} \mu}
\newcommand{\dmua}{\dmu_{\bfa}}
\newcommand{\dQa}{\mathrm{d} Q(\bfa)}
\newcommand{\om}{\omega}
\newcommand{\rmd}{\mathrm{d}}
\renewcommand{\a}{\alpha}
\newcommand{\s}{\sigma}
\newcommand{\Dinf}{\D^{\infty}}
\newcommand{\pinf}{+\infty}
\renewcommand{\bar}{\overline}
\renewcommand{\l}{\lambda}
\newcommand{\vphi}{\varphi}
\newcommand{\g}{\gamma}
\renewcommand{\b}{\beta}
\newcommand{\eps}{\varepsilon}
\newcommand{\dl}{\delta}
\renewcommand{\k}{\kappa}
\newcommand{\ds}{\: \mathrm{d}s}
\newcommand{\dt}{\: \mathrm{d}t}
\newcommand{\one}[1]{\mathds{1}_{\left \{ #1 \right \} }}
\newcommand{\ofrac}[2]{o \left ( \frac{#1}{#2} \right )}
\newcommand{\wa}{\widetilde{\a}}
\newcommand{\Dbar}{\overline{\D}}
\begin{document}

\title{Dimension Results for the Spectral Measure of the Circular $\beta$ Ensembles}

\author{\begin{tabular}{ccc}
Tom Alberts\footnote{alberts@math.utah.edu} &  Raoul Normand\footnote{rjn5@nyu.edu} \\
\small{Department of Mathematics} & \small{Courant Institute of Mathematical Sciences}  \\
\small{University of Utah} & \small{New York University} \\
\small{SLC, UT, USA} & \small{New York, NY, USA} \\
\end{tabular}
\\}

\maketitle

\begin{abstract}
We study the dimension properties of the spectral measure of the Circular $\beta$-Ensembles. For $\beta \geq 2$ it it was previously shown by Simon that the spectral measure is almost surely singular continuous with respect to Lebesgue measure on $\bd$ and the dimension of its support is $1 - 2/\beta$. We reprove this result with a combination of probabilistic techniques and the so-called Jitomirskaya-Last inequalities. Our method is simpler in nature and mostly self-contained, with an emphasis on the probabilistic aspects rather than the analytic. We also extend the method to prove a large deviations principle for norms involved in the Jitomirskaya-Last analysis.
\end{abstract}

\textbf{MSC2010 Classification:} 60B20, 15B52, 60F10

\section{Introduction}

\subsection{Circular $\b$ ensemble}

The Circular $\beta$-Ensemble $\cb_n$ is the point process of $n$ random points on the unit circle that are distributed according to the law
\begin{align}\label{eqn:circ_beta_density}
\frac{1}{Z_{n, \beta}} \prod_{1 \leq i < j \leq n} \left | e^{i \theta_i} - e^{i \theta_j} \right|^{\beta} \, \frac{\dth_1}{2 \pi} \ldots \frac{\dth_n}{2 \pi}.
\end{align}
The partition function $Z_{n, \beta}$ normalizes the density to be a probability measure and is known explicitly \cite{Good, Wilson}. The circular $\beta$-ensemble was introduced by Dyson \cite{Dyson} who also observed that its density is the Gibbs measure for $n$ identical charged particles confined to the unit circle and interacting via the two-dimensional Coulombic repulsion. For this reason the point process is also sometimes called a log-gas. It is also known that \eqref{eqn:circ_beta_density} is the stationary density for the Dyson Brownian motion on $\bd$, with the strength of the interaction term determined by $\beta$.

That the points live on the unit circle suggests that they should be the eigenvalues of a random unitary matrix, and this is well known in the $\beta = 2$ case (the circular unitary ensemble). For general $\beta$, the corresponding random matrices were constructed in \cite{KillipNenciu:Cbeta} using the so-called \textit{CMV representation} \cite{CMV, KillipNenciu:CMV, Simon1}. CMV is a very general yet efficient matrix representation of a unitary operator on a Hilbert space, analogous to the representation of a self-adjoint operator by a Jacobi matrix. The CMV representation is an infinite matrix that can be seen as an operator on $\ell^2(\N)$, parameterized by an infinite sequence $\bfa = (\alpha_0, \alpha_1, \ldots)$ taking values in $\bar{\D}$, the closure of the unit disk. These coefficients are now commonly referred to as the \textbf{Verblunsky coefficients}, and the correponding matrix denoted $\mc(\bfa)$. To construct a random unitary matrix whose eigenvalue distribution is the Circular $\beta$-Ensemble for general $\b > 0$, it is therefore enough to find an appropriate sequence of \textit{random} Verblunsky coefficients, and construct the corresponding CMV matrices. This is the approach successfully used in \cite{KillipNenciu:Cbeta}, though we will use the slightly different choice of Verblunskys made in \cite{KillipRyckman}. They prove that the appropriate choice is to take the $\alpha_j$ independent with law given by
\begin{align}\label{eqn:CBeta_density}
\alpha_j \sim e^{2 \pi i U} \sqrt{\operatorname{Beta} \left(1, \frac{\beta}{2} (j+1) \right)},
\end{align}
where $U$ is a Uniform random variable on $(0,1)$ that is independent of the Beta variable, and all are independent across $j$. Another way to put it is that the pdf of $\alpha_j$ with respect to the Lebesgue measure on $\D$ is
\[
f_j(z) = \frac{\beta(j+1)}{2 \pi}(1-|z|^2)^{\beta(j+1)/2 - 1}.
\]
Note that these variables are independent under all rotations and hence have mean zero, while the second moment of their radial part decays like
\begin{equation} \label{eq:2ndmoment0}
\E[|\alpha_k|^2] = \frac{2}{\beta(k+1) + 2} \sim \frac{2}{\beta k}.
\end{equation}
It is shown in \cite{KillipNenciu:Cbeta, KillipRyckman} that if $U_0$ is another independent Uniform$(0,1)$ random variable then the CMV matrix
\begin{align}\label{eqn:finite_n_CMV}
\mc_n := \mc(\alpha_0, \alpha_1, \dots, \alpha_{n-2}, e^{2 \pi i U_0}, 0, 0, 0, \dots)
\end{align}
has \eqref{eqn:circ_beta_density} as its eigenvalue density. We describe the CMV matrices more precisely in Section \ref{sec:CMV}, but for now one can think of them as infinite matrices. It will follow that $\mc_n$ has zeros on the rows and columns of order greater than $n$ and its $n \times n$ minor is a matrix with eigenvalue distribution given by \eqref{eqn:CBeta_density}. Moreover, these operators are ``nested'' within each other as $n$ increases. The operator $\mc_{n+1}$ is obtained from $\mc_{n}$ by simply shifting the $e^{2 \pi i U_0}$ entry up by one and inserting $\alpha_{n-1}$ into the now empty slot. In this sense $\mc_{n}$ is a minor of $\mc_{n+1}$, and in fact all $\mc_{n}$ are minors of the infinite matrix $\mc(\bfa)$. This feature is present in the CMV matrices used in \cite{KillipRyckman} but not those introduced in \cite{KillipNenciu:Cbeta}, which is the reason behind our choice of these Verblunskys. 

As $\bfa$ is a random family of Verblunsky coefficients in $\D$, the operator $\mc(\bfa)$ is a random operator on $\ell^2(\N)$, and it can be shown that it has $(1,0,0,\dots)$ as a cyclic vector. It therefore has a spectral measure $\mua$ on $\bD$, which encodes information about both the eigenvalues and eigenvectors of the operator. As the sequence $\bfa$ is random, it therefore gives rise to a random measure in the usual sense of the term, see \cite{KallenbergRM}. This is what we shall call the \textbf{spectral measure of the Circular $\b$-Ensemble}. Note that the choice of Verblunksy coefficients from \cite{KillipNenciu:Cbeta} would have a different spectral measure, despite having the same eigenvalue distribution. This is because the spectral weights attached to the eigenvalues are different.

The spectral measure is also intimately connected to the \textbf{Liouville quantum gravity measure} on $\bd$, as was recently proved in \cite{CN:CBEisGMC}. Liouville quantum gravity has been of intense interest in probability theory and statistical mechanics in recent years for its relation to Schramm-Loewner evolution, conformal field theory, and other parts of the broader field of random conformal geometry (see \cite{FB:freezing, AJKS:welding, Webb:RUM, MS:QLE, Remy:Fy-Bouch} among others). Perhaps its simplest description is via Gaussian Multiplicative Chaos, where one begins with a mean zero, log-correlated Gaussian field $X$ on $\bd$ with $\Cov(X(e^{i \theta}), X(e^{i \phi})) = -2 \log |e^{i \theta} - e^{i \phi}|$ and defines the GMC measure as the following limit, in the topology of weak convergence of measures: 
\[
 e^{\frac{\gamma}{2} X(e^{i \theta})}\;  \dth := \lim_{\epsilon \to 0} e^{\frac{\gamma}{2} X_{\epsilon}(e^{i \theta}) - \frac{\gamma^2}{8} E[X_{\epsilon}(e^{i \theta})^2]}\; \dth,
\]
where $X_{\epsilon}$ is an appropriately smoothed out version of $X$ that converges to $X$ in an appropriate sense as $\epsilon \to 0$. The limiting measure is shown to exist and be non-zero for all $\gamma \in [0,2)$, although the notation for it is somewhat abusive since the limiting measure is almost surely singular with respect to Lebesgue so long as $\gamma > 0$. Most notably, the limiting measure is not a probability measure on $\bd$ and its total mass is a random variable (see \cite{FB:freezing, CN:CBEisGMC, Remy:Fy-Bouch} for recent calculations of its density), in contrast with the spectral measure of the Circular $\b$-Ensemble which is a random probability measure for all $\beta > 0$. In particular, for the GMC the measure and the total mass go to zero as $\gamma \uparrow 2$, while the spectral measure of Circular $\b$ is singular continuous for $\beta \geq 2$ but undergoes a phase transition to an atomic measure as $\beta \downarrow 2$. To observe a similar phase transition for the GMC requires subtle renormalizations as $\gamma \uparrow 2$ (see \cite{APS:critical_Liouville}) which are not needed for Circular $\beta$, thanks to the theory of orthogonal polynomials on the unit circle. 

In the subcritical phase $\gamma < 2$ and the critical phase $\gamma = 2$ ($\beta \geq 2$) Chhaibi and Najnudel \cite{CN:CBEisGMC} recently proved an exact equality between GMC and the spectral measure of the Circular-$\beta$ Ensemble. More precisely, they determine an explicit real-valued function of the Verblunsky coefficients whose product with the spectral measure is \textit{exactly} the GMC measure on $\bd$, with the relationship $\gamma^2 = 8/\beta$. Note that \cite{CN:CBEisGMC} is actually that $\gamma^2 = 2/\beta$, but this is because they use $e^{\gamma X}$ as their normalization rather than our $e^{(\gamma/2) X}$ (with the same $2$ in the covariance kernel of the $X$). One of our reasons for this is that it maintains the phase transition at $\gamma = 2$ as in \cite{DS:LQG_KPZ}, but we also note that our normalization is also what is used in \cite{Remy:Fy-Bouch}. Also note that the result of \cite{CN:CBEisGMC} is somewhat similar to the result of Webb \cite{Webb:RUM}, who proves that (normalized) powers of the characteristic polynomial of random unitary matrices converge to GMC on $\bd$. The relation $\gamma^2 = 8/\beta$ agrees with the phase transition occurring at both $\gamma = 2$ and $\beta = 2$. Furthermore, the relation $\gamma = \sqrt{\kappa}$ between GMC and SLE processes \cite{DS:LQG_KPZ} leads to $\beta = 8/\kappa$, which was earlier conjectured by Cardy \cite{Cardy:CS} in his relation of Dyson's Brownian motion to multiple SLE. Following the analysis in \cite{DS:LQG_KPZ, HMP:thick_points, Berestycki:elementary_GMC} the GMC measure is supported on a set of Hausdorff dimension $1 - \gamma^2/4$, which by the identification above is equal to $1-2/\beta$. One of our main results is a new proof of this dimension formula using the tools of orthogonal polynomials on the unit circle and ideas from probability theory, which we describe next. 

\subsection{Results}

In this paper we focus on the almost sure dimension properties of the spectral measure of the Circular $\b$-Ensemble. We reprove a result of Simon \cite[Theorem 12.7.7]{Simon2} on the almost sure Hausdorff dimension of the support of the random measure by combining the technique of \textbf{size-biasing} with the \textbf{Jitomirskaya-Last inequalities} for computing local dimensions via orthogonal polynomials. Recently, Simon's result was invoked in \cite[Corollary 2.4]{CN:CBEisGMC} to give an alternative proof for the dimension of the support of the Gaussian multiplicative chaos via the direct link they establish to the spectral measure of the Circular $\beta$-ensemble. As we explain shortly, in the background of Simon's result is an appeal to the abstract but \textit{deterministic} theory of Kotani-Ushiroya \cite[Theorem 10.5.34]{Simon2}, which is quite involved. Our methods bypass the Kotani-Ushiroya theory and instead use probabilistic ideas to prove the same result as \cite[Theorem 12.7.7]{Simon2}, in the case of certain random Verblunsky coefficients and with minor modifications to the assumptions. We also prove a large deviation principle that is relevant to studying the fine dimensional properties of the measure \cite{Virag:ICM_operator}. Our results do not rely on the exact form \eqref{eqn:CBeta_density} for the law of the Verblunsky coefficients. We shall merely use the independence, the rotational symmetry of their laws, and the asymptotics \eqref{eq:2ndmoment0}. To be precise, let us denote by $Q$ the law of the family $(\a_0,\a_1,\dots)$ on $\D^{\infty}$, endowed with the product $\s$-algebra. This gives rise to a random measure $\mu_{\bfa}$ on $\bd$. We make the following assumptions, which shall become relevant at different (but clearly stated) parts of the text.

\begin{assumption} \label{as:indrot}
The Verblunsky coefficients $\bfalpha = (\a_0, \a_1, \ldots)$ take values in $\D$, are independent, and each have a radially symmetric distribution. In other words, $Q$ is a product measure on $\D^{\infty}$, and each marginal distribution is invariant under arbitrary rotations of $\D$.
\end{assumption}

\begin{assumption} \label{as:2ndmoment}
The second moments of the Verblunsky coefficients decay as
\begin{equation} \label{eq:2ndmoment}
\E[|\alpha_k|^2] \sim \frac{2}{\beta k}.
\end{equation}
\end{assumption}

These are the main assumptions that we will need. The factor $\b$ controls the dimension of the measure, as Theorem \ref{th:exactdim} below shows. Note that these two assumptions are clearly satisfied by the Verblunsky coefficients of the Circular $\beta$-Ensemble, which in fact satisfy $\E[|\a_k|^m] \sim C_m k^{-m/2}$ as $k \to \infty$ for all positive integers $m$. In general we do not require such strong asymptotics for the higher moments, but our two main results will also require some additional assumptions on higher moments that rule out the $\a_k$ having too much mass near the unit circle. For instance, distributions for $|\a_k|$ of the type
\[
\left ( 1 - \frac{1}{k^3} \right ) \dl_{k^{-1/2}} + \frac{1}{k^3} \dl_{1 - 1/k}
\]
would not be allowed.

\subsubsection{Exact dimension of the measure}

For our first result recall the following definition of the exact Hausdorff dimension of a measure $\mu$.

\begin{definition}
A measure $\mu$ on a measure-metric space has \textit{exact Hausdorff dimension $\k$} if it is supported on a set of Hausdorff dimension $\k$, and gives zero mass to all sets of dimension strictly smaller than $\k$.
\end{definition}

\begin{theorem} \label{th:exactdim}
Assume that the Verblunsky coefficients $\alpha_k$ satisfy Assumptions \ref{as:indrot} and \ref{as:2ndmoment} for some $\beta \geq 2$, and that the third moments also satisfy
\begin{equation} \label{eq:3rdmoment}
\\E[|\alpha_k|^3] = o(k^{-1}).
\end{equation}
Then with $Q$ probability one, the exact Hausdorff dimension of $\mu_{\bfa}$ is $1 - 2/\beta$.
\end{theorem}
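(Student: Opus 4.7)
The plan is to combine the Jitomirskaya--Last inequalities for OPUC, which convert the local Hausdorff dimension at a point of $\bd$ into a growth rate of orthonormal polynomials evaluated there, with a size-biasing argument that uses the rotational invariance of $Q$ to reduce $\mu_{\bfa}$-almost-everywhere statements to an almost sure statement at the single point $z = 1$, under a tractable modified law of the Verblunsky coefficients.

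First I would appeal to the Jitomirskaya--Last inequalities for OPUC, which give two-sided bounds of the form
$$\mu_{\bfa}\bigl(B(e^{i\theta}, 1/n)\bigr) \asymp \frac{1}{S_n(\theta, \bfa)}, \qquad S_n(\theta, \bfa) := \sum_{k=0}^{n} \bigl(|\phi_k(e^{i\theta})|^2 + |\psi_k(e^{i\theta})|^2\bigr),$$
where $\phi_k,\psi_k$ denote the orthonormal polynomials of the first and second kind associated with $\bfa$. Consequently the local Hausdorff dimension of $\mu_{\bfa}$ at $e^{i\theta}$ equals $1 - \lim_n \log S_n(\theta,\bfa)/\log n$ whenever this limit exists, and by the definition of exact Hausdorff dimension it is enough to show the limit equals $2/\beta$ for $\mu_{\bfa}$-a.e. $\theta$, $Q$-a.s. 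By Fubini this is equivalent to the same statement being true for $(Q\otimes \mu_{\bfa})$-a.e. $(\bfa, \theta)$.

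Next I would size-bias. Under Assumption~\ref{as:indrot} the law $Q$ is rotation-invariant, so $\mu_{\bfa}$ is rotation-invariant in distribution, $\E_Q[\mu_{\bfa}]$ is normalized Lebesgue on $\bd$, and the joint measure $Q(d\bfa)\,\mu_{\bfa}(d\theta)$ has uniform $\theta$-marginal. The conditional law of $\bfa$ given $\theta$ is therefore well-defined and, by rotation invariance, is obtained from a single law $\tilde Q_0$ (the conditional at $\theta = 0$) by rotating each $\a_k$. The task reduces to proving $\log S_n(0, \bfa)/\log n \to 2/\beta$ a.s. under $\tilde Q_0$. The tilted law $\tilde Q_0$ can be identified either as a limit of spectral-weight-biased laws of the finite CMV models from \eqref{eqn:finite_n_CMV}, or via an explicit Radon--Nikodym derivative expressed through orthogonal-polynomial identities; in either description it replaces $Q$ by a law under which each $\a_k$ has a tilted (but still tractable) angular distribution engineered to make $z=1$ a typical spectral point. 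I would then iterate the Szego recurrence for $(\phi_k(1), \phi_k^*(1))$ which, after taking logarithms, yields an additive recursion
$$\log |\phi_{n+1}(1)|^2 - \log |\phi_n(1)|^2 = \log \frac{1 + |\a_n|^2 - 2\,\mathrm{Re}(\bar\alpha_n e^{i\tau_n})}{1 - |\a_n|^2},$$
where $e^{i\tau_n}=\phi_n^*(1)/\phi_n(1)$. Taylor-expanding to second order in $|\a_n|$ and using the drift-plus-martingale decomposition, Assumption~\ref{as:2ndmoment} fixes the leading drift, the $\tilde Q_0$-tilt flips the sign of the dominant contribution, and one obtains $\log|\phi_n(1)|^2/\log n \to 2/\beta - 1$; bound \eqref{eq:3rdmoment} controls both the Taylor remainder and the martingale fluctuations and upgrades this to a.s. convergence. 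An analogous analysis for the $\psi_n$ then gives $S_n(0,\bfa) \sim n^{2/\beta}$ on a logarithmic scale $\tilde Q_0$-a.s., as needed.

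The main obstacle is the explicit construction and analytic control of $\tilde Q_0$: one must verify that conditioning on $\theta = 0$ under the size-biased joint measure preserves enough independence between the Verblunskys to run a strong law of large numbers, identify the precise tilt of each $\a_k$, and check that this tilt is still compatible with Assumption~\ref{as:2ndmoment}. The role of the extra third-moment assumption \eqref{eq:3rdmoment} is precisely to prevent concentration of mass of $|\a_k|$ near $\bd$ which, after tilting, would destroy both the LLN drift computation and the Taylor expansion used above.
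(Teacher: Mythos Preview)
Your overall architecture matches the paper's: Jitomirskaya--Last to reduce exact dimension to polynomial asymptotics at $\mu_{\bfa}$-typical points, then rotation-invariant size-biasing to pass to the fixed point $z=1$ under a tilted law $Q_0$, under which the modified coefficients $\gamma_k=\alpha_k B_k(1)$ remain independent with marginals $P_{\D}(z,1)\,Q(\gamma_k\in dz)$; then Taylor-expand $\log P_{\D}(\gamma_k,1)$ and run a martingale LLN. Two points, however, are genuine gaps.

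First, the Jitomirskaya--Last input is not of the symmetric form you wrote. The correct statement (Proposition~\ref{prop:local_dim_test} and Corollary~\ref{corollary:local_dim_polynomials}) compares $\|\varphi_\cdot\|_x$ to $\|\psi_\cdot\|_x^{t}$ with $t=s/(2-s)$, and the local dimension comes out as $s_0=2(1-c)/(2-c-d)$ where $c,d$ are the limits of $\log|\varphi_n(1)|^{-2}/\log n$ and $\log|\psi_n(1)|^{-2}/\log n$ under $Q_0$. The roles of $\varphi$ and $\psi$ are asymmetric. Your formula $\mu_{\bfa}(B(e^{i\theta},1/n))\asymp 1/S_n$ with $S_n=\sum_k(|\varphi_k|^2+|\psi_k|^2)$ cannot hold: under $Q_0$ one in fact has $|\varphi_n(1)|^2\sim n^{-2/\beta}$ but $|\psi_n(1)|^2\sim n^{+2/\beta}$, so your $S_n$ would grow like $n^{1+2/\beta}$ and your own formula would return a negative dimension. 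Relatedly, your target $\log|\phi_n(1)|^2/\log n\to 2/\beta-1$ is off; the actual limit is $-2/\beta$.

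Second, and more substantively, the analysis of $\psi_n$ under $Q_0$ is \emph{not} ``analogous'' to that of $\varphi_n$. The tilt $dQ_0/dQ=\prod_k P_{\D}(\gamma_k,1)$ factorises through the modified Verblunskys of $\bfa$, so the $\gamma_k$ stay independent and the $\varphi_n(1)$ computation goes through exactly as you sketch. But $\psi_n(1;\bfa)=\varphi_n(1;-\bfa)$ is governed by the modified Verblunskys of $-\bfa$, which are a different sequence with a different, non-product dependence on the $\gamma_k$; under $Q_0$ these are neither independent nor centred, and no ``same argument with a sign flip'' is available. The paper handles this with a separate device (Section~\ref{sec:secondkind}): conjugating the transfer matrix by the Cayley transform yields $|\psi_n(1)/\varphi_n(1)|^2=X_{n-1}^2+Y_{n-1}^2$ with $Y_{n-1}=|\varphi_n(1)|^{-2}$ already controlled, and $X_n$ a $Q_0$-martingale whose increments are $Y_{k-1}$ times a centred functional of $\gamma_k$; a truncation plus Lemma~\ref{lem:martvar} then gives $\log|\psi_n(1)|^{-2}/\log n\to -2/\beta$. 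This second-kind step is where the real work hides, and your proposal does not yet contain it.
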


A similar result is proved by Simon in \cite{Simon2}, albeit with slightly different assumptions (see also \cite{Breuer} for similar results for random tridiagonal matrices). Simon also assumes the independence of the Verblunskies and the same decay of the second moment, but instead of rotational invariance, he only assumes the weaker condition that $\E[\alpha_k] = \E[\alpha_k^2] = 0$ for each $k$. However, he also makes the extra assumptions that
\begin{equation} \label{eq:Simon_assumptions}
\sup_{n,\om} |\alpha_n| < 1, \quad \sup_{n,\om} \sqrt{n} |\alpha_n| < \infty,
\end{equation}
which we do not require and is not satisfied for the $\b$-ensemble. Simon also considers the so-called Aleksandrov measures $\mu_{\l \bfa}$, where $\l \in \bd$ and $\l \bfa$ means the rotated coefficients $(\l \a_0, \l \a_1, \dots)$, and his result is the following:

\begin{theorem}[{\cite[Theorem 12.7.7]{Simon2}}] \label{thm:Simon_dim}
With $\dQa \, \rmd \lambda/2\pi$ probability one the measure $\mu_{\l \bfa}$ has exact Hausdorff dimension $1-2/\b$.
\end{theorem}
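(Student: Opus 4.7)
The plan is to reduce Theorem \ref{thm:Simon_dim} to Theorem \ref{th:exactdim} using the rotation symmetry in Assumption \ref{as:indrot}, and then to sketch a proof of Theorem \ref{th:exactdim} based on the Jitomirskaya-Last inequalities combined with size-biasing. For the reduction, note that under Assumption \ref{as:indrot}, for any fixed $\l \in \bD$ the rotated family $\l \bfa = (\l \a_0, \l \a_1, \ldots)$ has the same law as $\bfa$, so $\mu_{\l \bfa}$ has the same distribution as $\mu_{\bfa}$. Hence by Fubini
\[
\int_{\Dinf \times \bD} \one{\mu_{\l \bfa} \text{ has exact dim } 1 - 2/\b} \, \dQa \, \frac{\rmd \l}{2 \pi} = \int_\bD Q\bigl\{\mu_{\bfa} \text{ has exact dim } 1 - 2/\b\bigr\} \frac{\rmd \l}{2 \pi} = 1,
\]
the last equality by Theorem \ref{th:exactdim}, which is precisely the statement of Theorem \ref{thm:Simon_dim}.

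To prove Theorem \ref{th:exactdim} itself, I would proceed as follows. The Jitomirskaya-Last inequalities for OPUC provide deterministic two-sided bounds relating $\mu_{\bfa}(B(e^{i\th}, 1/n))$ to polynomial norms built from the orthonormal polynomials $\vphi_k(e^{i\th})$ and the second-kind polynomials at $e^{i\th}$; in particular the local dimension of $\mu_{\bfa}$ at $e^{i\th}$ is controlled by the asymptotic growth rate of the Christoffel-Darboux kernel $K_n(e^{i\th}) := \sum_{k \leq n} |\vphi_k(e^{i\th})|^2$, and hence by that of $|\vphi_n(e^{i\th})|^2$. The Szeg\H{o} recursion $\vphi_{n+1} = \rho_n^{-1}(z \vphi_n - \bar{\a}_n \vphi_n^*)$ turns $\log |\vphi_n(e^{i\th})|^2$ on $|z|=1$ into a random walk whose step is, modulo $O(|\a_n|^3)$, a mean-zero martingale difference (by the rotational invariance of $\a_n$) plus a deterministic drift of order $|\a_n|^2$. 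Under Assumption \ref{as:2ndmoment} this drift accumulates to $\sim (2/\b) \log n$, giving a strong law $\log |\vphi_n|^2 / \log n \to 2/\b$ at Lebesgue-typical $\th$, which through Jitomirskaya-Last yields an upper bound of $1 - 2/\b$ on the local dimension of the support of $\mu_{\bfa}$.

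For \emph{exactness} we must control the local dimension at $\mu_{\bfa}$-typical points rather than Lebesgue-typical ones, and this is where \textbf{size-biasing} enters. Introduce the joint probability law
\[
P^*(\rmd \bfa, \rmd \th) := Q(\rmd \bfa) \otimes \mu_{\bfa}(\rmd \th) \qquad \text{on } \Dinf \times \bD.
\]
Since $\mu_{\bfa}$ is always a probability measure, the $\bfa$-marginal of $P^*$ remains $Q$, while conditionally on $\bfa$ the angle $\th$ is drawn from $\mu_{\bfa}$ itself. The orthonormality $\int |\vphi_k|^2 \, \dmua = 1$ gives $\E^* |\vphi_k(e^{i\th})|^2 = 1$ and, by Jensen, $\E^* \log|\vphi_k|^2 \leq 0$. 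Rerunning the Szeg\H{o}-recursion analysis under $P^*$, the effect of the size-biasing should be to flip the sign of the effective drift, yielding a second strong law $\log |\vphi_n(e^{i\th})|^2 / \log n \to -2/\b$ for $P^*$-almost every $(\bfa, \th)$. Plugging this back into Jitomirskaya-Last gives $K_n(e^{i\th}) \sim n^{1 - 2/\b}$ and $\mu_{\bfa}(B(e^{i\th}, 1/n)) \sim n^{-(1 - 2/\b)}$ at $\mu_{\bfa}$-typical $\th$, which is the statement that the local dimension equals $1 - 2/\b$ for $\mu_{\bfa}$-almost every $\th$, i.e.\ exactness.

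The main obstacle will be upgrading these in-expectation computations to almost-sure strong laws, both under $Q$ and under $P^*$. Under $Q$ this requires martingale concentration keeping the fluctuations of $\log |\vphi_n|^2$ at $o(\log n)$, which is where the third-moment assumption \eqref{eq:3rdmoment} is used as a Lindeberg-type condition to rule out rare but disproportionately large contributions from Verblunskies with mass near $\bD$. Under $P^*$ the analysis is more delicate: although the $\bfa$-marginal is still $Q$, the increments of the log-norm random walk under $P^*$ are no longer independent because the $\mu_{\bfa}$-sampled $\th$ couples them through the orthogonal polynomial process. Correctly identifying the tilted drift and variance, and showing that a Lindeberg-type bound persists under this coupling, will be the bulk of the technical work.
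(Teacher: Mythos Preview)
First, note that the paper does not itself prove Theorem \ref{thm:Simon_dim}: it is quoted from \cite{Simon2} and discussed only as a point of comparison. What the paper proves is Theorem \ref{th:exactdim}, and the text explicitly observes (just after the statement of Theorem \ref{thm:Simon_dim}) that under rotational invariance the two coincide. Your reduction step is therefore correct and is exactly the observation the paper makes, though you should be aware that Simon's theorem does \emph{not} assume rotational invariance, so your argument only recovers the special case covered by Theorem \ref{th:exactdim}.

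Your sketch of Theorem \ref{th:exactdim} is broadly on the right track---size-biasing via the joint law $\rmd Q(\bfa)\,\dmua(\th)$, Szeg\H{o} recursion turning $\log|\vphi_n|^2$ into a random walk, Jitomirskaya--Last to read off the local dimension---but there is a genuine gap. The Jitomirskaya--Last criterion (Proposition \ref{prop:local_dim_test} and Corollary \ref{corollary:local_dim_polynomials}) requires the asymptotics of \emph{both} $|\vphi_n|$ and the second-kind polynomials $|\psi_n|$: the local dimension is $2(1-c)/(2-c-d)$ where $c,d$ are the growth exponents of $|\vphi_n|^{-2}$ and $|\psi_n|^{-2}$ respectively. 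Your claim that $K_n(e^{i\th})\sim n^{1-2/\b}$ alone yields $\mu_{\bfa}(B(e^{i\th},1/n))\sim n^{-(1-2/\b)}$ is not justified; one really needs $d=-2/\b$ for $\psi_n$ under the size-biased measure. This $\psi_n$ analysis is the most delicate part of the paper's proof (Section \ref{sec:secondkind}): under $Q_0$ the polynomials $\psi_n$ are \emph{not} governed by an independent product as $\vphi_n$ is, and the paper handles this via a matrix conjugation (the $R_n\to C_n$ reduction to upper-triangular form) that decouples the recursion.

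You also misdiagnose the difficulty under the size-biased measure. Working with the modified Verblunskies $\g_k=\a_k B_k(1)$, the paper shows (Theorem \ref{theorem:markov_chain_description}) that the $\g_k$ remain \emph{independent} under $Q_0$, with each law tilted by the Poisson kernel $P_\D(\cdot,1)$. So there is no loss of independence to worry about; the actual work is computing the tilted moments, which the paper does via an explicit coupling $\g_k\mapsto\g_k^*=\g_k(1+\bar\g_k)/(1+\g_k)$ (Proposition \ref{prop:coupling_formula}) that yields $\E_{Q_0}[\g_k^m]=\E_Q[|\g_k|^{2m}]$ immediately.
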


In one sense this is a more general result as no rotational independence is required. On the other hand, when the $\a_k$ do have a rotationally symmetric distribution then all the $\mu_{\l \bfa}$ have the same distribution, and we therefore recover our Theorem \ref{th:exactdim}. However, this comes at the price of Assumptions \eqref{eq:Simon_assumptions} above, which are not satisfied in many situations, most notably for the spectral measure of the $\cb$. A truncation argument similar to what we apply in Section \ref{sec:bounded_Ver} would overcome this difficulty, but as we explain next the main strength of our proof is that it avoids the underlying \textit{Kotani theory} on which Simon's proof is based. This has the advantage of making the proof more self-contained, and highlights the usefulness of the underlying probabilistic ideas.

Both our proofs and Simon's are based on an analysis of the \textit{local dimension} of the measure $\mu_{\bfalpha}$ at different points. Loosely speaking, the local dimension of a measure $\nu$ on $\bd$ at a point $e^{i \theta}$ is the exponent $s_0 = s_0(\nu, \theta) \in [0,1]$ such that
\[
\nu(\theta - \epsilon, \theta + \epsilon) \approx \epsilon^{s_0}
\]
as $\epsilon \to 0$ (see Section \ref{sec:dimension_theory} for the precise definition). The dimension theory of Rogers \& Taylor \cite{RogersTaylor:ActaI, RogersTaylor:ActaII} implies that if the local dimension $\theta \mapsto s_0(\nu, \theta)$ is almost surely constant under the measure $\nu$, then $\nu$ has exact Hausdorff dimension equal to this constant. Computing $s_0(\nu,\theta)$ at a particular point usually requires working with some approximating sequence $\nu_{\delta}$ of smoothed out versions of the measure, which is often difficult because it requires precise estimates on the relationship between the scale of the approximation $\delta$ and the width of the interval $\epsilon$ in the quantity $\nu_{\delta}(\theta - \epsilon, \theta + \epsilon)$. For probability measures on the real line, a particularly nice method of approximation using the associated orthogonal polynomials was developed by Last \cite{Last:SingContSpectra} and Jitomirskaya-Last \cite{JitoLast:Acta}. Later, Simon \cite{Simon2} translated their results to probability measure on $\bd$ and OPUC, which forms the basis of his proof of Theorem \ref{thm:Simon_dim}. In fact, this is where we learned of the Jitomirskaya-Last technique. In short, it says that if $\varphi_n : \bd \to \C, n \geq 0,$ are the orthonormal polynomials in the Hilbert space $L^2(\bd, \rmd \nu)$ and $\psi_n : \bd \to \C, n \geq 0,$ are the associated \textbf{second kind polynomials} (see Section \ref{sec:second_kind} for the definition) then 
\[
s > s_0(\nu, \theta) \iff \liminf_{n \to \infty} \frac{\|\varphi_{\cdot}(e^{i \theta})\|_n^{2-s}}{\|\psi_{\cdot}(e^{i \theta})\|_n^s} = \infty,
\]
where $\|a_{\cdot}\|_n^2$ is the squared norm on sequences $a_{\cdot} = \{a_n \}_{n = 0}^{\infty}$ in $\C$ defined by
\[
\|a_{\cdot}\|_n^2 = \sum_{k=0}^n |a_k|^2.
\]
When applied to the orthogonal polynomials we refer to these as the Jitomirskaya-Last norms. Our analysis of the norms is somewhat similar to Simon's but the main difference in our proofs comes in the way we choose $\theta$. Simon appeals to a powerful theorem of Kotani-Ushiroya \cite[Theorem 10.5.34]{Simon2} that relates the growth of norms to the existence of points where the measure has a particular local dimension. The advantage of the Kotani-Ushiroya result is that it is purely deterministic, but the underlying theory is difficult and much broader than what is needed for random Verblunsky coefficients. Our approach is briefer and entirely self-contained. It relies only on classical probabilistic ideas: martingale arguments, laws of large numbers, and coupling techniques, and gives a full description of which points $\theta$ are the appropriate ones to look at. We analyze the Jitomirskaya-Last norms by considering them as random variables under the joint measure $dQ(\bfalpha) \, d \mu_{\bfalpha}(\theta)$, which is a skew product on $\D^{\infty} \times \bd$ with the Verblunskies chosen first and then the point $\theta$ sampled according to the measure determined by the Verblunskies. We give an alternative description of this measure using the \textbf{Bernstein-Szeg\"{o} approximation}, which says that for each fixed $n$ the quantity $|\varphi_n(e^{i \theta})|^{-2} \, \dth/2\pi$ is a probability measure on $\bd$, and that
\[
|\varphi_n(e^{i \theta})|^{-2} \, \frac{\dth}{2 \pi} \xrightarrow{n \to \infty} \dmua(\theta)
\]
in the sense of weak convergence of measures. The Bernstein-Szeg\"{o} approximation allows us to rewrite the joint measure $dQ(\bfalpha) d \mu_{\bfalpha}(\theta)$ in terms of the marginal measure of the point $\theta$ and the conditional measure of the Verblunsky coefficients, which turns out to be a much simpler description of the random pair. The conditional measure turns out to be a size-biasing of the original Verblunsky coefficients with nice properties. In particular there is a natural coupling between the original Verblunskies and the size-biased ones that in turn allows us to easily compute all moments of the size-biased Verblunskies. A similar size-biasing was previously used in \cite{BNR:CJE}, although in a slightly different form and for different purposes. In our work the size-biased Verblunskies are used to compute a ``strong law of large numbers'' for the growth of the Jitormirskaya-Last norms at typical points of the measure $\mu_{\bfalpha}$, which is then combined with the Rogers-Taylor theory to produce the dimension results.

We point out that for probabilists, the combination of Rogers-Taylor and Jitomirskaya-Last dimension theories is potentially useful in other applications. A now relatively standard method of computing Hausdorff dimensions of random sets is via the so-called \textit{second moment method}. Roughly speaking this requires estimates on the expected value and variance of the number of balls in an $\epsilon$-net of the ambient space that are required to cover the random fractal. The required estimates are on the power law blowup of these quantities as $\epsilon \to 0$. The expected number is typically easy to deal with since it only requires  the probability that a fixed point is within a distance $\epsilon$ of the random set, but the variance is more complicated since this requires estimates on probabilities of two fixed points both being within distance $\epsilon$ of the random set. This type of two point estimate requires getting a handle on often complicated correlations and can quickly become messy. In contrast, the Jitomirskaya-Last dimension theory requires two one-point estimates rather than one two-point estimate, and while these two one-point quantities are related the correlation appears to be simpler to deal with. The necessary estimate is handled in Section \ref{sec:Q0_asymptotics}. The downside of course is that Jitomirskaya-Last requires there to be a measure on the random set and some information about the associated orthogonal polynomials, which may not always be available. 

\subsubsection{Large Deviations for the Jitomirskaya-Last Norm}

Our second main result is a large deviations principle (LDP) for the growth of the Jitomirskaya-Last norms. Given that the dimension result of Theorem \ref{th:exactdim} is based on a strong law for the growth of these norms, an LDP is a very natural extension that should have applications in analyzing the subsets of $\bd$ where the measure has atypical local dimension.

\begin{theorem}\label{thm:norm_LD}
Assume the Verblunsky coefficients $\alpha_k$ satisfy Assumptions \ref{as:indrot} and \ref{as:2ndmoment} and that, for some $\epsilon > 0$ and all $\kappa > 0$,
\begin{equation} \label{eq:asLDP}
\E[|\alpha_k|^3] = O(k^{-1-\epsilon}), \quad \limsup_{k \to \infty} \E[(1-|\alpha_k|)^{-\kappa}] < \infty.
\end{equation}
Then under the measure $\rmd \Pl(\bfa,\th) = \rmd Q(\bfalpha) \, \dth / 2 \pi$, the sequence of random variables
\[
\frac{\log \|\varphi_{\cdot}(e^{i \theta})\|_n^2}{\log n}
\]
satisfies a large deviations principle with speed $\log n$ and rate function
\[
I(x) =
\begin{cases}
\frac{\beta}{8} \left( x - 1 - \frac{2}{\beta} \right)^2, & x \geq 0, \\
\infty, & x < 0.
\end{cases}
\]
Moreover, under the measure $\rmd \P(\bfa,\th) = \rmd Q(\bfalpha) \, \rmd \mu_{\bfalpha}(\theta)$, the same sequence satisfies the LDP with speed $\log n$ and rate function
\[
J(x) =
\begin{cases}
\frac{\beta}{8} \left( x - 1 + \frac{2}{\beta} \right)^2, & x \geq 0, \\
\infty, & x < 0.
\end{cases}
\]
\end{theorem}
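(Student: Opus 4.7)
My approach is to derive both LDPs by Gärtner–Ellis, starting with $\Pl$ where the rotational invariance of the Verblunskys allows an exact factorization of the MGF, and then passing to $\P$ via an exact exponential tilt provided by the Bernstein-Szegő approximation. The starting point is the Szegő recursion
\[
\frac{|\varphi_{n+1}(e^{i\theta})|^2}{|\varphi_n(e^{i\theta})|^2} = \frac{|1-\a_n e^{i\theta} f_n(e^{i\theta})|^2}{1-|\a_n|^2}, \qquad f_n := \varphi_n/\varphi_n^*,
\]
with $|f_n|=1$ on $\bd$ and $f_n$ measurable with respect to $\F_n := \s(\theta,\a_0,\dots,\a_{n-1})$. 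Under $\Pl$, rotational invariance of $\a_k$ yields $\a_k e^{i\theta} f_k \eqlaw \a_k$ conditional on $\F_k$, so iterating and applying the tower property gives the exact factorization
\[
\E_{\Pl}\bigl[|\varphi_n(e^{i\theta})|^{2\lambda}\bigr] = \prod_{k=0}^{n-1}\E\bigl[(1-|\a_k|^2)^{-\lambda}|1-\a_k|^{2\lambda}\bigr].
\]
Expanding each factor via the Poisson identity $\frac{1}{2\pi}\int|1-re^{i\phi}|^{-2}d\phi = (1-r^2)^{-1}$ gives $1+\lambda(1+\lambda)\E[|\a_k|^2]+O(\E[|\a_k|^3])$. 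Combining with $\E[|\a_k|^2]\sim 2/(\b k)$, $\E[|\a_k|^3]=O(k^{-1-\eps})$, and the tail bound $\limsup\E[(1-|\a_k|)^{-\k}]<\infty$ to handle errors near $|\a_k|=1$, one obtains
\[
\Lambda_B(\lambda) := \lim_{n\to\infty}\frac{1}{\log n}\log\E_{\Pl}\bigl[|\varphi_n|^{2\lambda}\bigr] = \frac{2\lambda(1+\lambda)}{\b},
\]
and Gärtner–Ellis yields the LDP for $\log|\varphi_n|^2/\log n$ under $\Pl$ with rate $I_B(y)=(\b/8)(y-2/\b)^2$ on $y\ge 0$.

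To pass from $|\varphi_n|^2$ to $\|\varphi\|_n^2 = \sum_{k\le n}|\varphi_k|^2$, I would extend the tower argument to multi-index MGFs $\E_{\Pl}\bigl[\prod_i|\varphi_{n_i}|^{2\lambda_i}\bigr]$, which similarly factorize with $\lambda$ replaced by $\Lambda_k := \sum_{i:n_i>k}\lambda_i$ at index $k$. This produces a process-level LDP for the log-trajectory $\bigl(\log|\varphi_{\lfloor n^t\rfloor}|^2/\log n\bigr)_{t\in[0,1]}$; along the optimal trajectory with endpoint $y$, one has $|\varphi_k|^2\approx k^y$ uniformly and hence $\|\varphi\|_n^2\sim n^{1+y}/(1+y)$ for $y>-1$. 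The contraction principle then delivers the LDP for $\log\|\varphi\|_n^2/\log n$ under $\Pl$ with rate $I(x)=I_B(x-1)=(\b/8)(x-1-2/\b)^2$ on $x\ge 0$, with $I(x)=\infty$ on $x<0$ since $\|\varphi\|_n^2\ge|\varphi_0|^2=1$.

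The LDP under $\P$ then follows from the exact tilt identity
\[
\E_\P[F] = \E_{\Pl}\bigl[F\cdot|\varphi_n|^{-2}\bigr]
\]
valid for every $F(\bfa,\theta)$ depending only on $\theta$ and $\a_0,\dots,\a_{n-1}$. Indeed, for $m\ge n$, applying the tower property to the extra factors $(1-|\a_k|^2)|1-\a_k e^{i\theta} f_k|^{-2}$ in $|\varphi_m|^{-2}/|\varphi_n|^{-2}$ and using the Poisson identity $\E[(1-|\a_k|^2)|1-\a_k|^{-2}]=1$ gives $\E_{\Pl}[F\cdot|\varphi_m|^{-2}]=\E_{\Pl}[F\cdot|\varphi_n|^{-2}]$, and letting $m\to\infty$ with the Bernstein-Szegő weak convergence $|\varphi_m|^{-2}d\theta/(2\pi)\Rightarrow d\mu_\bfa$ identifies the common value with $\E_\P[F]$. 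Taking $F=\|\varphi\|_n^{2\lambda}$ and applying Varadhan's lemma to the joint LDP for $(X_n,Y_n):=(\log\|\varphi\|_n^2/\log n,\log|\varphi_n|^2/\log n)$ under $\Pl$, a brief Legendre computation gives the rate $J(x) = I(x)+(x-1) = (\b/8)(x-1+2/\b)^2$ on $x\ge 0$, which is the stated form.

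The main obstacle is upgrading the one-point LDP for $|\varphi_n|^2$ to the process-level LDP needed for the sum-to-endpoint contraction. Atypical values of $Y_n$ must correspond to trajectories that grow regularly across all scales $k\le n$, rather than to deviations concentrated near $k=n$; the multi-index MGF formula delivers the requisite joint asymptotics, but a careful time-discretization and uniform-continuity argument is required. A secondary issue is the uniform control of the Taylor expansion errors of the MGF near $|\a_k|=1$, which is exactly the role of the high-moment hypothesis \eqref{eq:asLDP}.
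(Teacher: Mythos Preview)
Your approach is essentially the paper's: factorize the MGF via rotational invariance (the paper packages your tower argument into the modified Verblunskys $\gamma_k=\alpha_k B_k(1)$ and the product $|\varphi_n(1)|^{-2}=\prod_k P_\D(\gamma_k,1)$), prove a process-level LDP for $t\mapsto \log|\varphi_{\lfloor n^t\rfloor}|^{-2}/\log n$ (the paper fills exactly the gap you flag by proving a Mogulskii-type theorem for independent non-identically distributed summands with $c_k=1/k$, including the exponential and $C$-tightness), and then contract to the norm by a Laplace equivalence with $\max_{s\in[0,1]}\{s-Z_n(s)\}$, showing the linear path $\phi(s)=s(1-x)$ is optimal. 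The passage from $\Pl$ to $\P$ is done by the same Bernstein--Szeg\H{o} tilt you describe, which at the process level replaces $\Lambda^*(\cdot)$ by $\Lambda^*(\cdot)-\mathrm{id}$ and yields your $J(x)=I(x)+(x-1)$.
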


In particular, these moment assumptions are satisfied by the Verblunsky coefficients of the Circular $\beta$-Ensemble. Indeed, by \eqref{eqn:CBeta_density}, $|\alpha_k|^2$ has a Beta$(1, \beta (k+1)/2)$ distribution, and hence $\E[|\alpha_k|^3] = O(k^{-3/2})$. Moreover, this also implies that $1 - |\alpha_k|^2$ has a Beta$(\beta(k+1)/2, 1)$ distribution, and hence
\[
\E[(1 - |\alpha_k|)^{-\kappa}] \leq C \E[(1 - |\alpha_k|^2)^{-\kappa}] = C' \int_0^1 x^{-\kappa} x^{\beta(k+1)/2-1} \, dx = \frac{C''}{\beta(k+1)/2 - \kappa},
\]
for some constants $C',C''$, provided that $\kappa < \beta(k+1)/2$. Note however that the assumptions of Theorem \ref{thm:norm_LD} means that the quadratic nature of the rate function is not particular to the Circular-$\beta$ ensemble and is therefore universal for a wide class of Verblunsky coefficients, even though the spectral measure itself is in bijection with the Verblunskies. The quadratic rate function is of course a manifestation of the underlying Gaussian field behind the orthogonal polynomials, as discovered in [CITE]. 

The jump in the rate functions at $x = 0$ is because the range $x < 0$ would correspond to a shrinking of $\|\varphi_{\cdot}(1)\|_n^2$ as $n \to \infty$, which is clearly impossible since the norm is the sum of positive terms. That the rate function $J$ has a zero at $x = 1 - 2/\beta$ suggests that $\log \|\varphi_{\cdot}(1)\|_n^2 \sim (1 - 2/\beta) \log n$ almost surely under $\P$, which is a key part of the proof of Theorem \ref{th:exactdim}. The large deviations principle can be seen as a refinement of this result, and its proof also contains more detailed information. In particular, in the proof of Theorem \ref{th:exactdim}, we only need to consider the almost sure asymptotics of $\log |\varphi_n(e^{i \theta})|^{2}/\log n$ which are in turn enough to control the asymptotics of $\log \|\varphi_{\cdot}(e^{i \theta})\|_n^2/ \log n$. But in the large deviations regime, we need to control the behavior of the entire process $k \mapsto \log |\varphi_k(e^{i \theta})|^2$ in order to control the behavior of $\log \|\varphi_{\cdot}(e^{i \theta})\|_n^2$. We do so by proving a process level large deviations principle for $k \mapsto \log |\varphi_k(e^{i \theta})|^{-2}$, suitably rescaled and under an appropriate change of the time scale, which takes up the bulk of our proof of Theorem \ref{thm:norm_LD} in Section \ref{sec:LDP}. The process level LDP that we prove is an extension of Mogulskii's theorem for sums of iid random variables, although we need to relax the assumption of identical distribution in order to deal with the decaying Verblunsky coefficients. The process level LDP (Theorem \ref{thm:sum_LDP}) is useful in its own right since it explains the behavior of the Verblunsky coefficients when $\log |\varphi_n(e^{i \theta})|^{-2}$ takes on an atypical value, i.e. whenever the measure has some unusual behavior around $\th$. Once the process level LDP is proven, converting it into the LDP for the norm is a relatively straightforward application of the contraction principle.   

We point out that large deviations in the context of OPUC have become quite popular in recent years \cite{GNR:LD_sum_rule, GNR:LD_sum_rule_circle, GNR:LD_sum_rule_SMM, BSZ:LD_sum_rule, BSZ:Lukic}, although more towards sum rules rather than dimension theory. In particular, \cite{GNR:LD_sum_rule} uses large deviations techniques to give an alternative proof of the Szeg\"{o}-Verblunsky sum rule for OPUC, with extensions of this idea in the remaining papers. We will not state the sum rule here since it is concerned with the absolutely continuous part of the spectral measure, of which there is none for the types of Verblunsky coefficients that we are considering.    

\subsection{Outline}

The outline of this paper is as follows. Section \ref{sec:OPUC} is background on the OPUC theory and dimension theory that we will use throughout. In Section \ref{sec:RIVB}, we explain how the Bernstein-Szeg\"{o} approximation of measures allows us to study the typical points of the spectral measure via a size-biasing of their Verblunsky coefficients. We also explain the coupling procedure that allows us to transform the original Verblunsky coefficients into their size-biased versions, and how this allows us to compute moments of the size-biased quantities. In Section \ref{sec:local_dim} we use this coupling to prove Theorem \ref{th:exactdim} on the exact dimension of the spectral measure of the Circular-$\beta$ ensemble and other measures coming from rotationally invariant Verblunsky coefficients. Finally, in Section \ref{sec:LDP}, we prove the large deviations principle for the Jitomirskaya-Last norm, using an extension of results for process level large deviations. 

\vspace{5mm}
\noindent
\textbf{Acknowledgements:} We thank B\'{a}lint Vir\'{a}g for many helpful discussions, especially for suggesting the proof of Section \ref{sec:secondkind}.  Alberts was partially supported by NSF grants DMS-1715680, DMS-1811087, and Simons Collaboration Grant 351687.

\section{Background for OPUC Theory} \label{sec:OPUC}

In this section we review the deterministic theory behind the theory of Orthogonal Polynomials on the Unit Circle (OPUC), which will be our main technique to study the measure $\mu_{\bfa}$. The exposition and notation of this section very closely follows that in \cite{Simon1, Simon2}, where the interested reader can find a much more detailed treatment. We emphasize that almost everything in this section is deterministic in nature; the random case is considered in the following section.

\subsection{CMV matrices} \label{sec:CMV}

Originally discovered in \cite{CMV}, the CMV matrices are canonical representatives of unitary operators acting on a separable Hilbert space, much like tri-diagonal matrices are canonical representatives for self-adjoint operators. CMV matrices turn out to be five-diagonal but it is known \cite{Simon1} that they are the sparsest possible matrix representations of a unitary operator, in the sense that no fewer number of diagonals can represent all unitary operators.

The CMV matrices are defined as operators on the complex sequence space $\ell^2(\N)$, where $\N=\{0,1,2,\dots\}$, with the inner product
\begin{align*}
\langle ( a_n)_{n=0}^{\infty}, ( b_n )_{n=0}^{\infty} \rangle = \sum_{n=0}^{\infty} \overline{a_n} b_n.
\end{align*}
Note that this inner product is antilinear in the first factor, which is the convention we follow throughout. The input to forming a CMV matrix is an infinite sequence of complex numbers $\{ \alpha_n \}_{n=0}^{\infty}$ taking values in the closure $\Dbar$ of the unit disk $\D$. For each such sequence we make the following definition.

\begin{definition}\label{defn:CMV}
Let $\bfa = \{ \alpha_j, j \geq 0 \}$ be a sequence taking values in $\Dbar$. Let
\begin{align}\label{defn:rho}
\rho_j = \sqrt{1 - |\alpha_j|^2}
\end{align}
and define the $2 \times 2$ matrices $\Theta_j$ by
\begin{align}\label{defn:Theta_matrices}
\Theta_j = \left(
             \begin{array}{cc}
               \overline{\alpha}_j & \rho_j \\
               \rho_j & -\alpha_j \\
             \end{array}
           \right).
\end{align}
Let $\mm$ and $\ml$ be the infinite matrices
\begin{align*}
\mm = \left(
  \begin{array}{cccc}
    1 &  &  &  \\
     & \Theta_1 &  &  \\
     &  & \Theta_3 &  \\
     &  &  & \ddots \\
  \end{array}
\right),
\quad
\ml = \left(
         \begin{array}{cccc}
             \Theta_0 &  &  &  \\
             & \Theta_2 &  &  \\
             &  & \Theta_4 &  \\
             &  &  & \ddots \\
         \end{array}
     \right),
\end{align*}
where the $1$ in $\mm$ is just the number $1$, not an identity matrix, and the blank entries are all zero. The $\Theta_j$ matrices are always aligned so that the $\alpha_j$ terms lie on the diagonal. Then the \textit{CMV matrix} associated to the sequence $\bfa$ is the infinite matrix (or operator) on $\ell^2(\N)$ defined by
\begin{align*}
\mc(\bfa) = \mc(\alpha_0, \alpha_1, \alpha_2, \ldots) := \ml \mm.
\end{align*}
\end{definition}

It is straightforward to see that the CMV matrices are unitary and it is not much more difficult to compute that they are at most five-diagonal. The complex numbers $( \alpha_n, n \geq 0 )$, historically have more than one name in the literature but are now commonly referred to as the \textbf{Verblunsky coefficients}. We also use this terminology.

\subsection{Spectral Measure}

A key tool in the study of unitary operators is their \textit{spectral measure}, based on the spectral theorem \cite{Simon1} which we now briefly recall. Assume that $U : \Hil \to \Hil$ is a unitary operator on a separable Hilbert space $\Hil$, and further assume that there is a $\zeta \in \Hil$ such that the finite linear span of $\{ U^i \zeta : i \in \Z \}$ is dense in $\Hil$. Such a $\zeta$ is called a \textit{cyclic vector} for $U$. The spectral theorem asserts that there is a probability measure $\mu$ on $\bd$ and a linear mapping $V : \Hil \to L^2(\bd, \dmu)$ such that
\begin{enumerate}[(i)]
\item $V(Ux) = zV(x)$, that is, $U$ on the image side is just multiplication by the \textit{function} $z$;
\item $V$ preserves the inner product, i.e. $\langle Vx, Vy \rangle_{L^2(\bd, \dmu)} = \langle x, y \rangle_{\Hil}$, and hence $V$ is an isometry;
\item $V(\zeta) = 1$, the constant function on $\partial \D$.
\end{enumerate}
This $\mu$ is called the \textbf{spectral measure for the pair} $(U, \zeta)$. Since the mapping $V$ preserves the inner product we see that applying $U$ in $\Hil$ is equivalent to multiplying by $z$ in $L^2(\bd, \dmu)$. Hence studying $U$ is entirely reduced to studying properties of the Hilbert space $L^2(\bd, \dmu)$, and in that sense the spectral measure $\mu$ encodes all the information about $U$. 

For the CMV matrices, if the Verblunsky coefficients $\alpha_n$ all satisfy $|\alpha_n| < 1$ then it can be shown (see the next section) that $\delta_0 = (1,0,0, \ldots)$ is a cyclic vector for $\mc(\bfalpha)$ as an operator on $\ell^2(\N)$. To emphasize the dependence on the Verblunsky coefficients we will write $\mu_{\bfalpha}$ for the spectral measure of the pair $(\mc(\bfalpha), \delta_0)$. The mapping $\bfalpha \mapsto \mu_{\bfalpha}$ is measurable from $\D^{\infty}$ to the space of probability measures on $\bd$; which follows from the Bernstein-Szeg\H{o} approximation (Proposition \ref{prop:BS}).

\subsection{Orthogonal Polynomials}

One of the main tools used to study the spectral measure is the associated \textit{orthogonal polynomials}. Given a probability measure $\mu$ on $\bd$, let $L^2(\bd, \dmu)$ be the space of complex-valued, square-integrable functions with respect to the inner product
\begin{equation}
\langle f, g \rangle_{L^2(\bd, \dmu)} = \int_{\bd} \overline{f(e^{i \theta})} g(e^{i \theta}) \; \dmu(\theta),
\end{equation}
which we note is antilinear in the first factor. If $\mu$ is supported on an infinite number of points, then the functions $\{1, z, z^2, \dots\}$ are linearly independent in $L^2(\bd, \dmu)$, and hence we can use the Gram-Schmidt procedure to construct the sequence of orthogonal polynomials
\begin{equation} \label{eq:Phin}
\Phi_n(z) = \Phi_n(z; \dmu). 
\end{equation}
That is, $\Phi_n(z; \dmu)$ is the $L^2(\bd, \dmu)$ projection of $z^n$ on $\{1, z, \ldots, z^{n-1} \}^{\perp}$. If $\mu$ is supported on exactly $n$ distinct points, then the Gram-Schmidt procedure can still be applied but stops with $\Phi_{n-1}$. However, we will assume in the following that this not the case. In particular, our assumptions will ensure that the measures $\mu$ that we consider do have infinite support.

Now, note that $\Phi_0(z) = 1$ and the subsequent $\Phi_n$ are monic by definition. One of their key properties is that they satisfy the so-called \textit{Szeg\H{o} recurrence}:
\begin{equation} \label{eqn:szego_recurrence}
\Phi_{n+1}(z) = z \Phi_n(z) - \overline{\alpha}_n \Phi_n^*(z),
\end{equation}
where the $*$ operation takes the coefficients of an $n$th degree polynomial, reverses them and then conjugates them, i.e.
\begin{equation}
Q_n(z) = \sum_{j=0}^n q_j z^j \implies Q_n^*(z) = \sum_{j=0}^n \overline{q}_{n-j} z^j.
\end{equation}
The $\alpha_n$ terms that appear in \eqref{eqn:szego_recurrence} are precisely the Verblunsky coefficients, in the sense that $\mu$ is the spectral measure for the CMV matrix $\mc(\alpha_0, \alpha_1, \ldots)$ and the cyclic vector $\delta_0$, see Theorem 4.2.8 in \cite{Simon1}. Hence the theory of orthogonal polynomials provides a way to realize a  probability measure on $\bd$ as the spectral measure of a CMV matrix. It also provides a way to compute the CMV representation of a general unitary operator, by first going through the spectral measure and then pulling out the Verblunsky coefficients.

Another way to reconstruct the probability measure from the orthogonal polynomials is via the \textit{Bernstein-Szeg\H{o} approximation}. Let $\varphi_n$ be the normalized orthogonal polynomials defined by
\[
\varphi_n := \frac{\Phi_n}{\|\Phi_n\|},
\]
where the bars in the denominator indicate the $L^2(\bd, \dmu)$ norm. Since we will use the Bernstein-Szeg\H{o} approximation repeatedly we record it as a proposition.

\begin{proposition}[Bernstein-Szeg\H{o} approximation] \label{prop:BS}
For any sequence $\bfalpha \in \D^{\infty}$ the measures
\begin{equation} \label{eqn:berstein_szego_approx}
|\varphi_n(e^{i \theta})|^{-2} \, \frac{\dth}{2\pi}
\end{equation}
are probability measures on $\bd$, and they converge weakly to the spectral measure $\mu$ as $n \to \infty$.
\end{proposition}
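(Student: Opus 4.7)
The plan is to prove both claims at once by identifying $|\varphi_n(e^{i\theta})|^{-2}\frac{d\theta}{2\pi}$ as the spectral measure of a truncated Verblunsky sequence. For each $n \geq 0$, let $\nu_n$ denote the spectral measure on $\partial\D$ corresponding to the sequence $(\alpha_0,\ldots,\alpha_{n-1},0,0,\ldots)$. Since this truncated sequence lies in $\D^{\infty}$, the spectral theorem applied to its CMV matrix guarantees that $\nu_n$ is a bona fide probability measure on $\partial \D$. Thus, once we establish that $d\nu_n(\theta) = |\varphi_n(e^{i\theta})|^{-2}\frac{d\theta}{2\pi}$, the probability-measure assertion of the proposition follows for free, and only the weak convergence $\nu_n \to \mu$ remains.

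For the density identification, the Szeg\H{o} recurrence with $\alpha_k = 0$ for $k \geq n$ collapses to $\Phi^{(\nu_n)}_{k+1}(z) = z\Phi^{(\nu_n)}_k(z)$. Because the first $n$ Verblunsky coefficients of $\nu_n$ coincide with those of $\mu$, we also have $\Phi^{(\nu_n)}_k = \Phi_k$ for $0 \leq k \leq n$. Together these give $\Phi^{(\nu_n)}_k(z) = z^{k-n}\Phi_n(z)$ for all $k \geq n$, so the reversed polynomials stabilize: $(\Phi^{(\nu_n)}_k)^* \equiv \Phi_n^*$ for $k \geq n$. Under the hypothesis $\bfa \in \D^{\infty}$, the polynomial $\Phi_n^*$ has all its zeros outside $\overline{\D}$, so $1/\Phi_n^*$ is analytic on a neighborhood of $\overline{\D}$. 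Since $\nu_n$ has only finitely many nonzero Verblunskys, it lies in the Szeg\H{o} class, is purely absolutely continuous with respect to Lebesgue, and its Szeg\H{o} function is identified by Szeg\H{o}'s theorem as $D(z) = \lim_{k \to \infty} 1/(\varphi^{(\nu_n)}_k)^*(z) = 1/\varphi_n^*(z)$. The density of $\nu_n$ is therefore $|D(e^{i\theta})|^2 = |\varphi_n^*(e^{i\theta})|^{-2} = |\varphi_n(e^{i\theta})|^{-2}$, the last equality using $|P^*(z)| = |P(z)|$ on $|z|=1$.

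For the weak convergence, agreement of the first $n$ Verblunskys of $\nu_n$ and $\mu$ is equivalent, via the bijection between Verblunsky coefficients and Toeplitz moments of a probability measure (itself encoded by the Szeg\H{o} recurrence and the formula $\|\Phi_n\|^2 = \prod_{k<n}\rho_k^2$), to $\int z^k\, d\nu_n = \int z^k\, d\mu$ for all $|k| \leq n$. Hence for any fixed trigonometric polynomial $f$, the equality $\int f\, d\nu_n = \int f\, d\mu$ holds for all sufficiently large $n$. Combining Stone-Weierstrass density of trigonometric polynomials in $C(\partial\D)$ with the uniform bound $\nu_n(\partial\D) = \mu(\partial\D) = 1$, we upgrade this to $\int f\, d\nu_n \to \int f\, d\mu$ for every continuous $f$, which is precisely weak convergence.

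The most delicate step is the density computation, which relies on the Szeg\H{o} theorem identifying the AC density through the limit of reversed orthonormal polynomials. A more self-contained alternative would verify directly that $\varphi_0, \ldots, \varphi_n$ are orthonormal under $|\varphi_n|^{-2}\frac{d\theta}{2\pi}$ using contour integration, the identity $\overline{\varphi_k(e^{i\theta})} = e^{-ik\theta}\varphi_k^*(e^{i\theta})$ on the unit circle, and the fact that $\Phi_n^*$ has no zeros in $\overline{\D}$; this approach avoids appealing to Szeg\H{o} asymptotics but requires more careful residue bookkeeping, which is why I would favor the Szeg\H{o}-theoretic route first.
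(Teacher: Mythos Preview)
The paper does not actually prove this proposition; it is recorded as a known fact from the OPUC literature (Simon's books \cite{Simon1, Simon2}), and immediately after the statement the paper simply remarks that the Bernstein--Szeg\H{o} approximants are the spectral measures for the truncated CMV matrices $\mc(\alpha_0, \ldots, \alpha_{n-1}, 0, 0, \ldots)$. Your argument is correct and is the standard one, and it takes precisely this identification as its organizing principle---so in that sense you are filling in what the paper only gestures at.

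One small point worth tightening: the assertion that $\nu_n$ is \emph{purely} absolutely continuous does not follow from the Szeg\H{o} condition alone (the Szeg\H{o} class permits a singular part). You need either Baxter's theorem (since the truncated Verblunskys are in $\ell^1$) or the more direct observation that $(\varphi_k^{(\nu_n)})^*$ is eventually constant and nonvanishing on $\overline{\D}$, which forces the singular part to vanish. Alternatively, the contour-integration route you sketch at the end sidesteps this issue entirely, since it verifies directly that $|\varphi_n|^{-2}\,d\theta/2\pi$ has the correct orthonormal polynomials (and hence the correct Verblunskys); given that this route is also self-contained, it may actually be preferable here despite the residue bookkeeping.

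Finally, note that later in the paper (Corollary~\ref{cor:martingale_bernstein_szego}) a probabilistic proof of the weak-convergence part is given under the additional hypothesis of independent, rotationally invariant Verblunskys, via the martingale convergence theorem. That argument is genuinely different from yours---it does not identify the approximants as spectral measures of anything and cannot recover that they are probability measures---but it is worth being aware of as a complementary perspective.
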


Although we will not use this fact, it is worth noting that the measures \eqref{eqn:berstein_szego_approx} also turn out to be the spectral measures for the CMV matrices $\mc(\alpha_0, \alpha_1, \ldots, \alpha_{n-1}, 0,0,0, \dots)$.

%
%

\subsection{Rotations of the Measure} \label{sec:rotation}

For a sequence of Verblunsky coefficients $\bfa$ and the corresponding measure $\mua$, the following result identifies the Verblunsky coefficients of the rotated measure $\mua(\l \cdot)$ for $\l \in \bD$.

\begin{lemma} \label{lemma:rotated_Verblunskies}
For any sequence of Verblunsky $\bfa = \{\a_n\}_{n=0}^{\infty}$ in $\D$, and any $\l \in \bD$, we have
\[
\Phi_n(\l z; \{ \a_n \}) = \l^n \Phi_n(z,\{\l^{n+1} \a_n \}),
\]
for all $z \in \bar{\D}$ and $n \in \N$. In particular, the Verblunsky coefficients of the rotated measure $\mua(\l \cdot)$ are $\{\l^{n+1} \a_n \}$.
\end{lemma}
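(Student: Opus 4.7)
The plan is a straightforward induction on $n$ using the Szegő recurrence \eqref{eqn:szego_recurrence}. I will write $\bfb = \{\l^{n+1}\a_n\}_{n \geq 0}$ for the rotated sequence and $\Phi_n^{\bfa}, \Phi_n^{\bfb}$ for the corresponding monic orthogonal polynomials, and prove
\[
\Phi_n^{\bfa}(\l z) = \l^n \Phi_n^{\bfb}(z)
\]
by induction. The base case $n=0$ is immediate because $\Phi_0 \equiv 1$ for any Verblunsky sequence.

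For the inductive step, applying Szegő to the left-hand side gives
\[
\Phi_{n+1}^{\bfa}(\l z) = \l z \, \Phi_n^{\bfa}(\l z) - \bar{\a}_n (\Phi_n^{\bfa})^*(\l z).
\]
By the induction hypothesis, $\l z \, \Phi_n^{\bfa}(\l z) = \l^{n+1} z \, \Phi_n^{\bfb}(z)$. The key auxiliary computation is to show that the $*$-operation interacts well with the rotation, namely
\[
(\Phi_n^{\bfa})^*(\l z) = (\Phi_n^{\bfb})^*(z),
\]
which follows from expanding the induction hypothesis coefficient-by-coefficient: if $\Phi_n^{\bfa}(w) = \sum_{j=0}^n a_j w^j$ then $\Phi_n^{\bfb}(z) = \l^{-n}\sum_{j=0}^n a_j \l^j z^j$, so its $j$-th coefficient is $a_j \l^{j-n}$, and taking the $*$-operation (reverse then conjugate, using $|\l|=1$ so that $\bar{\l}^{-j} = \l^j$) produces exactly $\sum_{j=0}^n \bar{a}_{n-j}\l^j z^j$, which is the same as $(\Phi_n^{\bfa})^*(\l z)$. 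Combining these, and using that the $n$-th Verblunsky coefficient of $\bfb$ is $\l^{n+1}\a_n$ (whose conjugate contributes a factor $\l^{-(n+1)}$ that cancels the $\l^{n+1}$ we pulled out), the Szegő recurrence applied to $\bfb$ yields the desired identity at level $n+1$.

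For the ``in particular'' statement, once the polynomial identity is established, let $\nu$ denote the rotated measure $B \mapsto \mua(\l B)$, i.e.\ the pushforward of $\mua$ by $w \mapsto \l^{-1} w$. Define $\widetilde{\Phi}_n(z) := \l^{-n}\Phi_n^{\bfa}(\l z)$, which is a monic polynomial of degree $n$ in $z$. A direct change of variables in the inner product, using $|\l|=1$ to simplify the conjugates, shows that $\int \bar{w}^k \widetilde{\Phi}_n(w)\, d\nu(w) = \l^{k-n} \int \bar{z}^k \Phi_n^{\bfa}(z)\, d\mua(z) = 0$ for $0 \leq k < n$. Hence $\widetilde{\Phi}_n$ is the $n$-th monic orthogonal polynomial of $\nu$ by uniqueness, and comparing with the identity just proved gives $\widetilde{\Phi}_n = \Phi_n^{\bfb}$, so the Verblunsky coefficients of $\nu$ are $\{\l^{n+1}\a_n\}$, as claimed.

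The only real obstacle is bookkeeping: one must be careful that the $*$-operation involves both reversal and conjugation, so the factor of $\l$ that is pulled out of each coefficient gets conjugated, and it is precisely the assumption $|\l|=1$ (i.e.\ $\l \in \bD$) that makes $\bar{\l}^{-j} = \l^j$ and allows the matching to work out. Everything else is a routine application of Szegő's recurrence and the uniqueness of monic orthogonal polynomials.
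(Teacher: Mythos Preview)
Your proof is correct and follows essentially the same approach as the paper: induction on $n$ via the Szeg\H{o} recurrence for the polynomial identity, followed by a change-of-variable argument showing that $\l^{-n}\Phi_n^{\bfa}(\l z)$ (equivalently $\bar{\l}^n\Phi_n^{\bfa}(\l z)$) is the monic orthogonal polynomial sequence for the rotated measure. The paper's proof is much terser, merely asserting that the induction and the change of variable work; your version simply fills in the bookkeeping for the $*$-operation and the orthogonality integral.
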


\begin{proof}
The first formula follows by induction on $n$, by using the Szeg\H{o} recursion \eqref{eqn:szego_recurrence}. Then, the OPUC for the rotated measure are
\[
\Phi_n(z;\mua(\l \cdot)) = \bar{\l}^n \Phi_n(\l z; \mua),
\]
since the right hand side defines a family of monic polynomials which are orthogonal in $L^2(\bD, \mua(\l \cdot))$, as can be seen by a simple change of variable. Putting the pieces together, we thus get
\[
\Phi_n(z;\mua(\l \cdot)) = \bar{\l}^n \Phi_n(\l z;\mua) = \bar{\l}^n \Phi_n(\l z; \{\a_n \}) = \Phi_n(z,\{\l^{n+1} \a_n \}),
\]
which exactly means that the Verblunsky coefficients of $\mua(\l \cdot)$ are $\{\l^{n+1} \a_n \}$.
\end{proof}

\subsection{Second Kind Polynomials and the Transfer Matrix \label{sec:second_kind}}

The second kind polynomials correspond to rotated versions of the Verblunsky coefficients. For Verblunsky coefficients $\bfalpha = \{ \alpha_n, n \geq 0 \}$, the \textit{Aleksandrov measures} are the probability measures on $\bd$ with Verblunsky coefficients $\lambda \bfalpha = \{\lambda \alpha_n, n \geq 0 \}$, i.e. $\mu_{\lambda \bfalpha}$, for $\lambda \in \bd$. It is important to note that the Aleksandrov measures are \textit{not} just a rotation of the original measure, that is $\mu_{\lambda \bfalpha}$ is different from $\mua(\l \cdot)$.

The case $\lambda = -1$ turns out to be very important, and the corresponding polynomials are denoted by $\Psi_n$ and $\psi_n$, i.e.
\begin{equation}\label{eqn:second_kind_polynomials}
\Psi_n(z) = \Phi_n(z;\dmu_{- \bfa}), \quad \psi_n(z) = \vphi_n(z;\dmu_{- \bfa}).
\end{equation}
The $\Psi_n$ are referred to as the \textit{second kind polynomials}. The Szeg\H{o} recursion for $\Psi_n$ becomes
\begin{equation} 
\Psi_{n+1}(z) = z \Psi_n(z) + \overline{\alpha_n} \Psi_n^*(z).
\end{equation}
There is a similar Szeg\H{o} recurrence for $(\Phi_n^*)$ and $(\Psi_n^*)$, and their renormalized versions. If we define
\[
P_n(z) =
\begin{pmatrix}
\psi_n(z) & \vphi_n(z) \\
 -\psi_n^*(z) & \vphi_n^*(z)
\end{pmatrix}
\]
then the recursion for $P_n(z)$ is 
\begin{equation} \label{eqn:szego_recurrence_matrix}
P_{n+1}(z) = A_n Z P_n(z), \quad
A_n =
\rho_n^{-1/2}
\begin{pmatrix}
1 & - \bar{\a}_n \\
- \a_n & 1 \\
\end{pmatrix},
\quad
Z =
\begin{pmatrix}
z & 0 \\
0 & 1
\end{pmatrix}.
\end{equation}
The matrices $A_n$ (or $A_n Z$) are referred to as the transfer matrices. The recurrence formula for $\varphi_n$ is particularly useful for our purposes. We write it using an auxiliary quantity $B_n(z)$ defined as
\begin{equation} 
B_n(z) = \frac{z \vphi_n(z)}{\vphi_n^*(z)}.
\end{equation}
From \eqref{eqn:szego_recurrence_matrix} we can write the Szeg\H{o} recursion for $\varphi_n$ as
\begin{equation} \label{eqn:szego_recurrence_VW}
\varphi_{n+1}(z)  = \rho_n^{-1} (z \varphi_n(z) - \overline{\alpha_n} \varphi_n^*(z)) = \rho_n^{-1} z \varphi_n(z) (1 - \overline{\alpha_n B_n(z)}). 
\end{equation}
We will be mostly interested in the case $|z| = 1$, for which the definition of the $*$ operation implies that $\varphi_n^*(e^{i \theta}) = e^{i n \theta} \overline{\varphi_n(e^{i \theta})}$. Therefore the definition of $B_n$ becomes
\begin{equation}  \label{eqn:Bn_defn}
B_n(e^{i \theta}) = e^{-i (n-1) \theta} \varphi_n(e^{i \theta}) / \overline{\varphi_n(e^{i \theta})} 
\end{equation}
from which one easily sees that $B_n(e^{i\th})$ merely registers the argument of $\vphi_n(e^{i\th})$ and $|B_n(e^{i \theta})| = 1$. More generally, the definition of $\varphi_n^*$ implies that $B_n$ is a finite Blaschke product from which the last fact readily follows. From the recursion \eqref{eqn:szego_recurrence_VW} and the definition \eqref{eqn:Bn_defn} we obtain the recurrence 
\begin{equation} \label{eqn:V_recursion}
B_{n+1}(e^{i\th}) = e^{i\th} B_n(e^{i\th}) \frac{1 - \overline{\alpha_n B_n(e^{i\th})}}{1 - \alpha_n B_n(e^{i\th})}, \quad B_0(e^{i\th}) = e^{i\th}.
\end{equation}
The initial condition is from \eqref{eqn:Bn_defn} and $\varphi_n(e^{i \theta}) = 1$.

Finally, in view of the Berstein-Szeg\H{o} approximation, we shall need to study the sequence $(|\vphi_n(e^{i\th})|^{-2})$. Taking the modulus of both sides of \eqref{eqn:szego_recurrence_VW} and using the definition \eqref{defn:rho} of $\rho_n$ gives
\begin{equation} \label{eqn:phi_psi_mod_recursion}
|\varphi_{n+1}(e^{i \theta})|^{-2} = |\varphi_n(e^{i \theta})|^{-2} \frac{1 - |\alpha_n|^2}{|1 - \alpha_n B_n(e^{i \theta})|^2}.
\end{equation}
This recursion will be very important in our later analysis.

\subsection{Pr\"{u}fer Phases and the Modified Verblunsky Coefficients} \label{sec:modifiedVerb}

The recursion formula \eqref{eqn:phi_psi_mod_recursion} and the Bernstein-Szeg\H{o} approximation demonstrate that the quantities $\alpha_n B_n(e ^{i \theta})$ will play a central role in our study. Due to rotation invariance of the laws of our Verblunsky coefficients, it is usually enough to study $\varphi_n$ at a single point, for which we use $z = 1$. Following \cite[Lemma 2.1]{KillipStoiciu} (and similar constructions in \cite{BNR:CJE, CMN:max-cBeta}) we therefore define the modified Verblunsky coefficients by
\begin{equation} \label{eq:modified_Verblunsky}
\gamma_k = \alpha_k B_k(1), \quad k \geq 0.
\end{equation}
This definition sets up a bijection between the modified Verblunskies and the original ones so we may choose to work with whichever is most convenient at any given moment.

In order to study the field $|\varphi_n(e^{i \theta})|^{-2}$ we will still need to keep track of the $B_n$ field at other points. Given that $B_n(1)$ is already absorbed into the modified Verblunskies, it is enough to keep track of the ratio $B_n(e^{i \theta})/B_n(1)$. Note that this is an evolution on $\bd$ so we may define a sequence of continuous, increasing functions $\delta_n : (-\pi, \pi) \to \R$ such that 
\[
 e^{ i \delta_n(\theta) } = \frac{B_n(e^{i \theta})}{B_n(1)}. 
\]
This quantity is known as the \textbf{relative Pr\"{u}fer phase} at time $n$. Strictly speaking this only defines the process modulo $2 \pi \Z$, so the more precise definition we shall use is that $\delta_n$ is defined by the recurrence relation
\begin{equation} \label{eqn:relative_Prufer_phase}
\delta_{n+1}(\theta) = \delta_n(\theta) + \theta + 2 \Im \log \left( \frac{1 - \gamma_n}{1 - \gamma_n e^{i \delta_n(\theta)}} \right), \quad \delta_0(\theta) = \theta.
\end{equation}
This recursion is a straightforward consequence of \eqref{eqn:V_recursion}. The branch of the logarithm is chosen so as to give $0$ when $\gamma_n = 0$, see \cite[Proposition 2.2]{KillipStoiciu} for more details. Note that, by its definition, $\delta_n$ is a measurable function of $\gamma_0, \dots, \gamma_{n-1}$, and that $\delta_n(0) = 0$ for all $n$. These definitions lead to the commonly used relationship
\begin{equation} \label{eq:akBk}
\alpha_k B_k(e^{i \theta}) = \gamma_k e^{i \delta_k(\theta)}.
\end{equation}

\subsection{Caratheodory Functions and the Poisson Kernel}

The Poisson kernel for the disk is useful for directly studying $\mu$ and for the recurrence relation \eqref{eqn:phi_psi_mod_recursion}. We denote the disk Poisson kernel by $P_{\D}$, and recall that it is given by
\begin{equation}
P_{\D}(z, e^{i \theta}) = \frac{1-|z|^2}{|e^{i \theta} - z|^2}
\end{equation}
for $z \in \D$. Clearly $P_{\D}(z, e^{i \theta}) = P_{\D}(z e^{-i \theta}, 1)$ and recall that $z \mapsto P_{\D}(z, 1)$ is harmonic since
\begin{equation} \label{eqn:poisson_kernel_real_part}
P_{\D}(z, 1) = \Re \left ( \frac{1+z}{1-z} \right ).
\end{equation}
For $z_0 \in \D$, let $\phi_{z_0} : \D \to \D$ be the conformal map
\begin{equation} \label{eqn:mobius_map}
\phi_{z_0}(z) = \frac{z - z_0}{1 - \overline{z_0} z}.
\end{equation}
Recall that this is the unique conformal map of the disk to itself which sends $z_0$ to $0$ and has positive derivative at the origin. It is straightforward to check the relation
\begin{equation} \label{eqn:poisson_modulus}
| \phi_{z_0}'(e^{i \theta}) | = P_{\D} \left ( z_0, e^{i \theta} \right ).
\end{equation}
Combining the recursion \eqref{eqn:phi_psi_mod_recursion} and \eqref{eq:akBk} with these properties of the Poisson kernels gives
\begin{equation} \label{eqn:phi_psi_prod_formula}
|\varphi_n(e^{i \theta})|^{-2} = \prod_{k=0}^{n-1} P_{\D}(\alpha_k, \overline{B_k(e^{i \theta})}) = \prod_{k=0}^{n-1} P_{\D}(\gamma_k e^{i \delta_k(\theta)}, 1).
\end{equation}
We will often take $\theta = 0$ so that the terms on the right hand side are simply $P_{\D}(\gamma_k, 1)$.

The formulas above are useful when the Bernstein-Szeg\H{o} approximation is used to approximate $\mu$, but the Poisson kernel can also be used to study $\mu$ by considering the Caratheodory function:
\[
F(z) = F(z; \dmu) := \int_{\bd} \frac{e^{i \theta}+z}{e^{i \theta}-z} \; \dmu(\theta).
\]
Note that the real part of the integrand is the Poisson kernel. The behavior of the modulus $|F(r e^{i \theta})|$ as $r \to 1$ gives information on the behavior of the measure, and its magnitude can be bounded in terms of orthogonal polynomials using the \textit{Jitomirskaya-Last inequalities}.

\begin{theorem}[Jitomirskaya-Last inequalities \cite{JitoLast:Acta, Simon2}]\label{thm:Jito_Last}
There exists a universal constant $A > 1$ (independent of the choice of probability measure $\mu$ on $\bd$) such that
\[
A^{-1} \frac{\|\varphi_{\cdot}(e^{i \theta})\|_{x(r)}}{\|\psi_{\cdot}(e^{i \theta})\|_{x(r)}} \leq |F(r e^{i \theta})| \leq A \frac{\|\varphi_{\cdot}(e^{i \theta})\|_{x(r)}}{\|\psi_{\cdot}(e^{i \theta})\|_{x(r)}}
\]
for any $r \in [0,1)$ and $e^{i \theta} \in \bd$. The norm is the following: for a sequence $(a_n)_{n \geq 0}$ in $\C$ and $x > 0$ we define $\|a_{\cdot}\|_{x}$ by
\[
\|a_{\cdot}\|_{x}^2 = \sum_{n=0}^{\lfloor x \rfloor} |a_n|^2 + (x - \lfloor x \rfloor)|a_{\lceil x \rceil}|^2.
\]
Then for $r \in [0,1)$ the function $x(r)$ is the unique solution to
\[
(1-r) \|\varphi_{\cdot}(e^{i \theta})\|_{x(r)} \|\psi_{\cdot}(e^{i \theta})\|_{x(r)} = \sqrt{2}.
\]
\end{theorem}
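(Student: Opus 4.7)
My plan is to reproduce the OPUC adaptation of the Jitomirskaya-Last strategy from \cite{JitoLast:Acta, Simon2}, the central object being a \emph{defect function} measuring the discrepancy between $F(z)\varphi_n(z)$ and $\psi_n(z)$. On one hand a Wronskian identity lets me solve for $F(z)$ as an explicit rational combination of the first- and second-kind polynomials and the defect; on the other hand the defect can be squeezed between two multiples of $(1-r)\|\varphi_\cdot\|_n \|\psi_\cdot\|_n$, and matching the two yields the optimal scale $x(r)$ as the natural balancing point.

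Concretely, I would carry out the following steps. First, establish the Wronskian identity $\Phi_n^*(z)\Psi_n(z) + \Phi_n(z)\Psi_n^*(z) = 2z^n$ by induction on $n$ using the Szegő recurrence \eqref{eqn:szego_recurrence} and its sign-flipped analog for $\Psi_n$; dividing by $\|\Phi_n\|$ gives the normalized version in $\varphi_n, \psi_n$. Second, introduce the defect $g_n(z) = F(z) \varphi_n(z) - \psi_n(z)$ and its starred analog; using that $F$ is the Carathéodory transform of $\mu$ and the orthogonality of the $\varphi_n$ in $L^2(d\mu)$, derive an integral representation for $g_n$ in which the integrand is small when $|z|$ is close to $1$ and $n$ is large, and observe that $(g_n, g_n^*)$ satisfies the same transfer-matrix recurrence \eqref{eqn:szego_recurrence_matrix} as $(\varphi_n, \varphi_n^*)$, producing a second linearly independent solution. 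Third, combine the Wronskian with the definitions of $g_n$ and $g_n^*$ to solve for $F$ explicitly, obtaining two-sided bounds
\[
|F(re^{i\theta})| \asymp \frac{|g_n(re^{i\theta})| + |g_n^*(re^{i\theta})|}{|\varphi_n(re^{i\theta})| + |\varphi_n^*(re^{i\theta})|}.
\]
Fourth, prove the quantitative comparison
\[
|g_n(re^{i\theta})|^2 + |g_n^*(re^{i\theta})|^2 \asymp (1-r)^2 \, \|\varphi_\cdot(e^{i\theta})\|_n^2 \, \|\psi_\cdot(e^{i\theta})\|_n^2
\]
via discrete integration by parts against the geometric series associated with $r$ together with Cauchy-Schwarz, with the $(1-r)^2$ arising from differences $r^k - r^{k+1}$. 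Finally, combine Steps 3 and 4 and choose $n = x(r)$: the defining equation for $x(r)$ is precisely the balance point at which the right-hand side of the defect comparison equals a universal constant, the fractional interpolation in the norm definition emerges from enforcing equality rather than comparability, and the $\sqrt{2}$ tracks the ``$2$'' in the Wronskian.

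The main obstacle will be Step four, specifically the lower bound in the two-sided $\ell^2$-comparison for the defect. The upper bound is a standard summation-by-parts plus Cauchy-Schwarz, but the matching lower bound requires ruling out destructive cancellation across the transfer-matrix steps. I expect to handle this by exploiting that the transfer matrices $A_n Z$ in \eqref{eqn:szego_recurrence_matrix} have determinant of modulus $r$, uniformly bounded away from zero for $r \in (0,1)$, so they are uniformly invertible; running the summation-by-parts argument on the inverse transfer matrix then recovers the reverse inequality. With that in hand, the remainder of the proof is essentially OPUC algebra and a one-line optimization in $n$.
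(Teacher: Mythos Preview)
The paper does not prove this theorem. It is stated as a background result with attribution to \cite{JitoLast:Acta} (the original real-line version) and \cite{Simon2} (the OPUC translation), and the only comment the paper makes beyond the statement is the sentence ``The original proof of these inequalities in \cite{JitoLast:Acta} was for orthogonal polynomials on the line. The translation to the OPUC case is in \cite{Simon2}.'' There is therefore nothing to compare your proposal against in the paper itself.

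That said, your outline is a faithful sketch of the argument one finds in \cite{Simon2}: the Wronskian identity $\varphi_n^* \psi_n + \varphi_n \psi_n^* = 2 z^n$, the defect $g_n = F \varphi_n - \psi_n$ as a second transfer-matrix solution, the algebraic inversion to extract $F$, and the variation-of-parameters estimate controlling $|g_n|^2 + |g_n^*|^2$ by $(1-r)^2 \|\varphi_\cdot\|_n^2 \|\psi_\cdot\|_n^2$. Your identification of the lower bound in Step~4 as the delicate point is accurate; in the literature this is handled not by inverting the transfer matrix per se but by a variation-of-parameters formula expressing $g_n$ as a weighted sum of $\varphi_k, \psi_k$ with coefficients $r^k - r^{k+1}$, after which both directions follow from Cauchy--Schwarz and the constancy (up to a factor of $r^n$) of the Wronskian. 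If you intend to write this out in full, that is the cleaner route; the ``run summation-by-parts on the inverse'' idea you propose would work but is more circuitous.
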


It is known that at least one of the sequences $(\vphi_n(e^{i \th}))$ or $(\psi_n(e^{i \th}))$ is not in $\ell^2(\N)$, hence the function $x(r)$ is well-defined. The original proof of these inequalities in \cite{JitoLast:Acta} was for orthogonal polynomials on the line. The translation to the OPUC case is in \cite{Simon2}. 

%
%
%
%

\subsection{Dimension Theory \label{sec:dimension_theory}}

For singular continuous measures there are many different methods for describing their fractal geometry. We use the very precise technique that is described in \cite{Last:SingContSpectra, JitoLast:Acta}, which in turn comes from the earlier work of Rogers and Taylor \cite{RogersTaylor:ActaI, RogersTaylor:ActaII}. The Rogers and Taylor theory provides a decomposition of a singular continuous measure with respect to a Hausdorff measure, akin to the standard Lebesgue decomposition of a general measure into components that are atomic, absolutely continuous, and singular continuous.

To understand the theory, briefly recall the definition of Hausdorff measure: for $d \in [0,1]$ and $S \subset \bd$ define $h^{d}(S)$ by
\[
h^{d}(S) = \lim_{\delta \downarrow 0} \inf_{\delta-\textrm{covers}} \sum_{i=1}^{\infty} |E_i|^{d},
\]
where the infimum is over all covers of $S$ by arcs of length less than $\dl$.
When restricted to Borel sets, $h^{d}$ is a Borel measure. For any such $S$ there is a number $\hdim S$, called the Hausdorff dimension of $S$, such that $d > \hdim S \implies h^{d}(S) = 0$ and $d < \hdim S \implies h^{d}(S) = \infty$. Conversely $h^{d}(S) = 0 \implies \hdim S \leq d$ and $h^{d}(S) = \infty \implies \hdim S \geq d$. Based on these notions, Rogers and Taylor make the following definitions.

\begin{definition}
For $d \in [0,1]$, a probability measure $\mu$ on $\bd$
\begin{enumerate}[(i)]
\item is $d$-continuous if $\mu(S) = 0$ for all $S \subset \bd$ with $h^{d}(S) = 0$,
\item is $d$-singular if there exists $S \subset \bd$ such that $\mu(S) = 1$ but $h^{d}(S) = 0$,
\item has exact dimension $d$ if for every $\epsilon > 0$ it is both $(d - \epsilon)$-continuous and $(d + \epsilon)$-singular.
\end{enumerate}
\end{definition}

The exact dimension of a measure is a very useful description of the size of the fractal sets that support it. To compute exact dimensions for a measure, Rogers and Taylor introduce the notion of local dimension.

\begin{definition}
Given a probability measure $\mu$ on $\bd$ and a parameter $s \in [0,1]$, define, for each $e^{i \theta} \in \bd$,
\[
D_{\mu}^s(\theta) := \limsup_{\epsilon \downarrow 0} \frac{\mu((\theta - \epsilon, \theta + \epsilon))}{(2 \epsilon)^s},
\]
where $(\theta - \epsilon, \theta + \epsilon)$ refers to the arc of length $2 \epsilon$ on $\bd$ centered around $e^{i \theta}$. Then for each $\theta$, there clearly exists an $s_0 \in [0,1]$ such that
\[
D_{\mu}^s(\theta) =
\begin{cases}
0 & \text{if $s < s_0$,} \\
\infty & \text{if $s > s_0$.}
\end{cases}
\]
We will refer to this $s_0 = s_0(\mu, \theta)$ as the \textit{local dimension} of the measure $\mu$ at the point $\theta$. 
%
\end{definition}

With this definition in hand, Rogers and Taylor produce the following important result.

\begin{theorem}[Rogers-Taylor \cite{RogersTaylor:ActaI, RogersTaylor:ActaII}]\label{thm:rogers_taylor}
If the local dimension for $\mu$ is $s_0$ at $\mu$-a.e. points $\theta$, then $\mu$ has exact dimension $s_0$.
\end{theorem}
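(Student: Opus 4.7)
The plan is to establish the two conditions in the definition of exact dimension separately: for every $\eps>0$, the measure $\mu$ is $(s_0-\eps)$-continuous and also $(s_0+\eps)$-singular. Both reductions proceed by comparing $\mu$-mass of arcs to their $s$-powers, pointwise in $\th$, and then globalizing via standard covering arguments on $\bd$.

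For the continuity part, fix $s = s_0 - \eps$. By hypothesis, $D_\mu^s(\th) = 0$ for $\mu$-a.e. $\th$. Introduce the increasing exhausting family
\[
A_{N,k} = \left\{\th : \mu((\th - \dl, \th + \dl)) \leq N (2\dl)^s \text{ for all } 0 < \dl < 1/k \right\},
\]
so that $\mu\bigl(\bigcup_{N,k} A_{N,k}\bigr) = 1$. Given any Borel $T \subset \bd$ with $h^s(T) = 0$, for each $k,N$ and each $\eta > 0$ pick a cover of $T$ by arcs $E_i$ of length less than $1/k$ with $\sum_i |E_i|^s < \eta$. Every $E_i$ meeting $A_{N,k}$ sits inside a concentric arc of length at most $2|E_i|$ centered at a point of $A_{N,k}$, whose $\mu$-mass is bounded by $N(2|E_i|)^s$. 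Summing gives $\mu(T \cap A_{N,k}) \leq N 2^s \eta$; sending $\eta \to 0$ and then $N,k \to \infty$ yields $\mu(T) = 0$.

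For the singularity part, fix $s = s_0 + \eps$, so that $D_\mu^s(\th) = \infty$ for $\mu$-a.e.\ $\th$, and let $S = \{\th : D_\mu^s(\th) = \infty\}$, which satisfies $\mu(S) = 1$. For every $K > 0$, each $\th \in S$ admits a sequence of radii $\dl \downarrow 0$ with $\mu((\th-\dl, \th+\dl)) \geq K (2\dl)^s$. The collection of all such arcs is a Vitali-type fine cover of $S$, so on the doubling metric space $\bd$ one can extract a countable disjoint subfamily $\{I_j\}$ whose $5$-enlargements still cover $S$. Then
\[
\sum_j |5 I_j|^s \leq 5^s K^{-1} \sum_j \mu(I_j) \leq 5^s K^{-1},
\]
since the $I_j$ are disjoint and $\mu$ is a probability measure. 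Taking the arcs in the cover with lengths smaller than an arbitrary $\dl_0 > 0$ and letting $\dl_0 \to 0$ shows $h^s(S) \leq 5^s/K$; letting $K \to \infty$ gives $h^s(S) = 0$, so $\mu$ is $s$-singular.

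The only genuine subtlety is the Vitali extraction in the singularity step: the lower bound on $\mu$-mass holds only along a sequence of scales rather than for all sufficiently small scales, so one must invoke the ``fine cover'' version of the Vitali covering lemma. On $\bd$ this is classical, and the rest of the argument is measure-theoretic bookkeeping. Once both directions are in place, combining them for all $\eps > 0$ gives exact Hausdorff dimension $s_0$ as defined.
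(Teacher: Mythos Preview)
The paper does not supply a proof of this theorem; it is stated with attribution to Rogers and Taylor and used as a black box throughout Section~\ref{sec:dimension_theory} and its corollaries. So there is no proof in the paper to compare against.

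Your argument is a correct and standard proof of the Rogers--Taylor result. The $(s_0-\eps)$-continuity direction is the usual mass distribution principle: bound $\mu$ on small arcs by their $s$-power via the exhausting sets $A_{N,k}$, then push this bound through any cover of a null-$h^s$ set. The $(s_0+\eps)$-singularity direction is the standard Frostman/Vitali argument: the limsup condition gives a fine cover of $S$ by arcs with large $\mu$-mass relative to $s$-diameter, the $5r$-covering lemma extracts a disjoint subfamily, and disjointness plus $\mu(\bd)=1$ controls $\sum |5I_j|^s$. You correctly flag that the Vitali step requires the fine-cover version because the lower bound holds only along a sequence of scales; on $\bd$ this is unproblematic. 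A minor point you glossed over is the Borel measurability of the sets $A_{N,k}$ (since $\th \mapsto \mu((\th-\dl,\th+\dl))$ is only lower semicontinuous, the sublevel sets are not obviously Borel), but this is routine to repair by passing to rational $\dl$ or using outer regularity.

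In summary: the paper offers no proof, and yours is a valid self-contained argument for the cited classical result.
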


The final result, which is again due to Jitomirskaya and Last and is a consequence of their inequalities (Theorem \ref{thm:Jito_Last}), gives a way of computing the local dimension via orthogonal polynomials. In subsequent sections this will be one of our main tools.

\begin{proposition}[Jitomirskaya-Last \cite{JitoLast:Acta, Simon2}]\label{prop:local_dim_test}
For $s \in (0,1)$ let $t = s/(2-s)$. Then
\begin{align*}
s > s_0(\mu, \theta) \iff D_{\mu}^s(\theta) = \infty \iff \liminf_{x \to \infty} \frac{\|\varphi_{\cdot}(e^{i \theta})\|_x}{\|\psi_{\cdot}(e^{i \theta})\|_{x}^{t}} = 0.
\end{align*}
\end{proposition}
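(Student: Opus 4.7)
The first equivalence, $s > s_0(\mu,\theta) \iff D_\mu^s(\theta) = \infty$, is essentially the definition of $s_0$ as the unique threshold where $s \mapsto D_\mu^s(\theta)$ jumps from $0$ to $\infty$; the only subtlety is the boundary value itself, which is handled by monotonicity of $s \mapsto \mu((\theta-\epsilon,\theta+\epsilon))/(2\epsilon)^s$ for fixed $\epsilon$.

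The heart of the statement is the second equivalence, relating $D_\mu^s(\theta) = \infty$ to a growth condition on the Jitomirskaya--Last norms. My plan is to pass through the modulus of the Caratheodory function $F(z) = F(z;d\mu)$ as an intermediary. The key step is to establish
\[
D_\mu^s(\theta) = \infty \quad\Longleftrightarrow\quad \limsup_{r \uparrow 1} (1-r)^{1-s} |F(re^{i\theta})| = \infty, \qquad(\star)
\]
via standard two-sided comparisons between $|F(re^{i\theta})|$ and averages of $\mu$ on arcs around $\theta$ of scale $1-r$. For the ``$\Leftarrow$'' direction, the Poisson representation $\Re F(re^{i\theta}) = \int P_\D(re^{i\theta},e^{i\phi}) \, d\mu(\phi)$ together with the pointwise bound $P_\D(re^{i\theta},e^{i\phi}) \gtrsim 1/(1-r)$ for $|\phi-\theta| \leq 1-r$, and the trivial $|F| \geq \Re F$, yields
\[
|F(re^{i\theta})| \;\geq\; \frac{c}{1-r}\,\mu((\theta-(1-r),\theta+(1-r))),
\]
and hence $(1-r)^{1-s}|F(re^{i\theta})| \gtrsim \mu((\theta-\epsilon,\theta+\epsilon))/\epsilon^s$ with $\epsilon = 1-r$. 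The reverse direction is more delicate: one needs an upper bound on $|F|$ in terms of masses of nearby arcs, obtained by a dyadic decomposition of the Poisson integral plus a comparable estimate for the Hilbert-transform-like imaginary part, essentially controlling $|F|$ by a maximal function of $\mu$.

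Once $(\star)$ is in hand, I would substitute the Jitomirskaya--Last inequality (Theorem \ref{thm:Jito_Last}) $|F(re^{i\theta})| \asymp \|\varphi_\cdot(e^{i\theta})\|_{x(r)}/\|\psi_\cdot(e^{i\theta})\|_{x(r)}$ and eliminate $(1-r)$ using $(1-r)\|\varphi\|_{x(r)}\|\psi\|_{x(r)} = \sqrt{2}$. A direct computation gives, up to a multiplicative constant,
\[
(1-r)^{1-s}|F(re^{i\theta})| \;\asymp\; \frac{\|\varphi_\cdot(e^{i\theta})\|_{x(r)}^{s}}{\|\psi_\cdot(e^{i\theta})\|_{x(r)}^{2-s}}.
\]
An algebraic rearrangement with $t = s/(2-s)$ converts the condition $\limsup = \infty$ into the stated $\liminf = 0$ criterion. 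Finally, since $r \mapsto x(r)$ is continuous and tends to $\infty$ as $r \uparrow 1$ (as recorded following the statement of Theorem \ref{thm:Jito_Last}), the limit over $r$ can be replaced by one over $x$.

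The main obstacle I expect is the upper bound on $|F(re^{i\theta})|$ in the proof of $(\star)$. The Poisson kernel immediately gives the lower bound via $|F| \geq \Re F$, but controlling the imaginary part from above requires a more careful harmonic-analytic argument, typically via a dyadic partition together with a maximal-function estimate. The remainder of the proof is algebraic manipulation and a routine change of variable from the radial parameter $r$ to the norm-index $x(r)$.
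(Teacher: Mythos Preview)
The paper does not prove this proposition; it is quoted from \cite{JitoLast:Acta, Simon2} and used as a black box, with only Corollary~\ref{corollary:local_dim_polynomials} proved from it. Your outline is precisely the route taken in those references: compare $D_\mu^s(\theta)$ to the radial blow-up of $|F(re^{i\theta})|$ via Poisson-kernel estimates (your $(\star)$), then substitute the Jitomirskaya--Last inequality and eliminate $1-r$ through the defining relation $(1-r)\|\varphi\|_{x(r)}\|\psi\|_{x(r)}=\sqrt 2$. So the approach is correct and standard, and your identification of the upper bound on $|F|$ as the genuine analytic work is on target.

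There is, however, a concrete gap in your algebra. Using Theorem~\ref{thm:Jito_Last} exactly as printed, with $|F|\asymp\|\varphi\|/\|\psi\|$, your substitution gives $(1-r)^{1-s}|F|\asymp\|\varphi\|^{\,s}/\|\psi\|^{\,2-s}$, and then $\limsup=\infty$ is equivalent to $\liminf\|\psi\|/\|\varphi\|^{t}=0$, \emph{not} to the stated $\liminf\|\varphi\|/\|\psi\|^{t}=0$; the claimed ``algebraic rearrangement'' does not close. A sanity check against Proposition~\ref{prop:Q0_circ_beta_results} makes this visible: with $\|\varphi\|_x^2\sim x^{1-2/\beta}$ and $\|\psi\|_x^2\sim x^{1+2/\beta}$ under $Q_0$, your formula would give a threshold $1+2/\beta$ rather than $1-2/\beta$. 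The resolution is that the ratio in the Jitomirskaya--Last inequality in \cite{Simon2} is $\|\psi\|/\|\varphi\|$; Theorem~\ref{thm:Jito_Last} as printed here has the fraction inverted (consistently with $F(z)=\lim_n\psi_n^*(z)/\varphi_n^*(z)$ inside $\D$). With $|F|\asymp\|\psi\|/\|\varphi\|$ one gets $(1-r)^{1-s}|F|\asymp\|\psi\|^{\,s}/\|\varphi\|^{\,2-s}=(\|\psi\|^{t}/\|\varphi\|)^{2-s}$, and now $\limsup=\infty$ is exactly $\liminf\|\varphi\|/\|\psi\|^{t}=0$. So your plan is sound, but you must correct the direction of the ratio before the final step goes through.
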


An important corollary of this fact, which we will use repeatedly, is the following. The same argument appears in \cite{Simon2}.

\begin{corollary}\label{corollary:local_dim_polynomials}
If, for a probability measure $\mu$ on $\bd$ and a point $e^{i \theta} \in \bd$, there are constants $c, d < 1$ such that
\[
\lim_{n \to \infty} \frac{\log |\varphi_n(e^{i \theta})|^{-2}}{\log n} = c, \quad \lim_{n \to \infty} \frac{\log |\psi_n(e^{i \theta})|^{-2}}{\log n} = d,
\]
then $s_0(\mu, \theta) = 2(1-c)/(2-c-d)$.
\end{corollary}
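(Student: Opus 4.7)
The strategy is to convert the pointwise growth hypotheses on $|\varphi_n(e^{i\theta})|^{2}$ and $|\psi_n(e^{i\theta})|^{2}$ into corresponding growth rates for the Jitomirskaya-Last norms $\|\varphi_\cdot(e^{i\theta})\|_n$ and $\|\psi_\cdot(e^{i\theta})\|_n$, and then read off the value of $s_0(\mu,\theta)$ from Proposition \ref{prop:local_dim_test}.

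\noindent\textbf{Step 1: Logarithmic asymptotics of the norms.} Fix a small $\eps > 0$. The hypothesis $\log|\varphi_n(e^{i\theta})|^{-2}/\log n \to c$ implies that for all $n$ sufficiently large, $n^{-c-\eps} \leq |\varphi_n(e^{i\theta})|^2 \leq n^{-c+\eps}$. Since $c < 1$, we may pick $\eps$ small enough that $-c + \eps > -1$ and $-c - \eps > -1$, so that both $\sum_{k=1}^n k^{-c\pm\eps}$ are of order $n^{1-c\pm\eps}$. Summing the pointwise bounds gives
\[
n^{1-c-\eps} \lesssim \|\varphi_\cdot(e^{i\theta})\|_n^2 \lesssim n^{1-c+\eps}
\]
up to constants (depending on $\eps$), and similarly
\[
n^{1-d-\eps} \lesssim \|\psi_\cdot(e^{i\theta})\|_n^2 \lesssim n^{1-d+\eps}.
\]
Letting $\eps \to 0$, we conclude
\[
\lim_{n \to \infty} \frac{\log \|\varphi_\cdot(e^{i\theta})\|_n}{\log n} = \frac{1-c}{2}, \qquad \lim_{n \to \infty} \frac{\log \|\psi_\cdot(e^{i\theta})\|_n}{\log n} = \frac{1-d}{2}.
\]
The same asymptotics hold with $\lfloor x \rfloor$ in place of $n$ when $x \to \infty$ continuously, since the norm $\|\cdot\|_x$ is monotone and $\|\cdot\|_{\lceil x \rceil}/\|\cdot\|_{\lfloor x\rfloor}$ is bounded.

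\noindent\textbf{Step 2: Apply the Jitomirskaya-Last criterion.} Fix $s \in (0,1)$ and set $t = s/(2-s)$ as in Proposition \ref{prop:local_dim_test}. Combining the previous step,
\[
\lim_{x \to \infty} \frac{\log \bigl( \|\varphi_\cdot(e^{i\theta})\|_x / \|\psi_\cdot(e^{i\theta})\|_x^{t} \bigr)}{\log x} = \frac{1-c}{2} - t \cdot \frac{1-d}{2} = \frac{2(1-c) - s(2-c-d)}{2(2-s)}.
\]
Thus $\liminf_{x \to \infty} \|\varphi_\cdot(e^{i\theta})\|_x / \|\psi_\cdot(e^{i\theta})\|_x^{t}$ equals $0$ if the expression in the numerator is negative, and equals $+\infty$ if it is positive. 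By Proposition \ref{prop:local_dim_test}, the former is equivalent to $s > s_0(\mu,\theta)$, while the latter gives $s \leq s_0(\mu,\theta)$. The sign changes exactly at $s = 2(1-c)/(2-c-d)$, so
\[
s_0(\mu,\theta) = \frac{2(1-c)}{2-c-d},
\]
as claimed (when this quantity lies in $(0,1)$; otherwise $s_0(\mu,\theta)$ is pinned at the corresponding endpoint of $[0,1]$).

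\noindent\textbf{Where the difficulty lies.} There is no serious obstacle: the entire argument is routine bookkeeping once Proposition \ref{prop:local_dim_test} is in hand. The only mild subtlety is Step 1, where one must be careful to use the assumption $c, d < 1$ to ensure the tail sums $\sum_k k^{-c\pm\eps}$ and $\sum_k k^{-d\pm\eps}$ genuinely dominate any contribution from the initial finite block of terms and grow polynomially (rather than converging or picking up a logarithmic correction). Allowing $c$ or $d$ to be negative, in which case the individual terms $|\varphi_k|^2$, $|\psi_k|^2$ themselves grow, is handled by exactly the same two-sided bound.
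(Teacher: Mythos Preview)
Your proof is correct and follows essentially the same approach as the paper's own argument: both convert the pointwise logarithmic asymptotics into two-sided polynomial bounds on the Jitomirskaya--Last norms (using $c,d<1$ to ensure the partial sums grow like $n^{1-c}$ and $n^{1-d}$), then apply Proposition~\ref{prop:local_dim_test} to locate the threshold $s_0$. You carry out the algebra of Step~2 a bit more explicitly than the paper does, but the logic is identical.
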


Note that the results are expressed in terms of the negative second power of the orthogonal polynomials, which turns out to be the more useful quantity for random Verblunksy coefficients.

\begin{proof}
By the assumptions, for any $\epsilon > 0$ there exists an $N$ large such that $n \geq N$ implies
\[
n^{c - \epsilon} \leq |\varphi_n(e^{i \theta})|^{-2} \leq n^{c + \epsilon}, \quad n^{d - \epsilon} \leq |\psi_n(e^{i \theta})|^{-2} \leq n^{d + \epsilon}.
\]
Thus by summing and using the definition of the $\|\cdot\|_x$ norms and the fact that $c, d < 1$ (so that the norms grow with $x$) there are constants $C_0, C_1$ such that
\[
C_0 x^{1 - c - \epsilon} \leq \|\varphi_{\cdot}(e^{i \theta})\|_x^2 \leq C_1 x^{1 - c + \epsilon}, \quad C_0 x^{1 - d - \epsilon} \leq \|\psi_{\cdot}(e^{i \theta})\|_x^2 \leq C_1 x^{1-d+\epsilon}.
\]
From these inequalities it is straightforward to check that if $t = s/(2-s)$ then
\[
\liminf_{x \to \infty} \frac{\|\varphi_{\cdot}(e^{i \theta})\|_x}{\|\psi_{\cdot}(e^{i \theta})\|_x^{t}} =
\begin{cases}
\infty, & s < s_0 \\
0, & s > s_0
\end{cases},
\]
with $s_0 = 2(1-c)/(2-c-d)$. Proposition \ref{prop:local_dim_test} completes the proof.
\end{proof}


\section{Rotationally Invariant Independent Verblunsky Coefficients \label{sec:RIVB}}

\subsection{Construction}

From the last subsection we see that dimensional properties of a probability measure can be derived by analyzing the random variable $D_{\mu}^s(\theta)$, where $\theta$ is a random point on $\bd$ distributed according to $\mu$. We are interested in the case where $\mu$ itself is random and constructed by making a random choice for the Verblunsky coefficients. Letting $Q$ be a probability measure on $\D^{\infty}$ and $\bfalpha = \{ \alpha_n, n \geq 0 \}$ be a generic element of $\D^{\infty}$, we are therefore interested in the joint probability measure
\begin{equation} \label{eqn:joint_prob_measure}
\rmd \P(\bfalpha, \theta) := \rmd Q(\bfalpha) \: \dmu_{\bfalpha}(\theta)
\end{equation}
on $\D^{\infty} \times \bd$, where $\mu_{\bfalpha}$ is the spectral measure corresponding to the sequence $\bfalpha$. If the pair $(\bfalpha, \theta)$ is distributed according to this measure, then by Section \ref{sec:dimension_theory} dimensional properties for $\mu_{\bfalpha}$ can be derived by studying the random variable $D_{\mu_{\bfa}}^s(\theta)$.

It is worthwhile to be slightly pedantic about this construction. Throughout our probability space will be $\Omega = \D^{\infty} \times \bd$ equipped with its Borel $\sigma$-algebra $\F$ (under the product topology). By the Bernstein-Szeg\H{o} approximation \eqref{eqn:berstein_szego_approx}, the mapping $\bfa \mapsto \mu_{\bfa}$ is measurable from $\Dinf$ to the space of probability measures on $\bD$ with the weak topology. In other words, $\mu$ is a random measure in the usual sense, see \cite{KallenbergRM}. Then, given a probability measure $Q$ on $\D^{\infty}$, the probability measure $\P$ is defined on rectangles by
\[
\P(A \times B) = \int_A \mu_{\bfalpha}(B) \: \rmd Q(\bfalpha) = \E_Q \left ( \mu_{\bfalpha}(B) \onef_{\{ \bfalpha \in A \}} \right )
\]
for Borel sets $A \subset \D^{\infty}, B \subset \bd$. Finally $\P$ is extended to all of $\F$ in the usual way. Since the spaces $\D^{\infty}$ and $\bd$ are Polish there are regular conditional distributions for both the $\bfalpha$ and $\theta$ variables, see \cite[Theorem 5.3]{KallenbergFMP} for a proof of this fact. By the construction of $\P$ the conditional distribution of $\theta$ given $\bfalpha$ is exactly $\mu_{\bfalpha}$, while the marginal distribution of $\bfalpha$ is simply $Q$. In the next section we will analyze the conditional distribution $Q_{\theta}$ of the Verblunsky coefficients $\bfalpha$ given $\theta$. We will also make heavy use of the measure
\begin{equation} \label{eqn:ind_joint_measure}
\rmd \Pl(\bfalpha, \theta) = \rmd Q(\bfalpha) \, \frac{\dth}{2 \pi},
\end{equation}
under which the Verblunsky coefficients and $\theta$ are independent.

Note that under the joint measure $\P$, we have the following straightforward extension of the Rogers-Taylor result, Theorem \ref{thm:rogers_taylor}.

\begin{corollary}\label{cor:P_dimension}
If the local dimension for $\mu_{\bfalpha}$ is $s_0$ with $\P$-probability one, then $\mu_{\bfalpha}$ has exact dimension $s_0$ for $Q$-a.e. realizations of the Verblunsky coefficients $\bfalpha$.
\end{corollary}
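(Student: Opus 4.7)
The plan is to apply the disintegration of the joint measure $\P$ from \eqref{eqn:joint_prob_measure} and then invoke the deterministic Rogers--Taylor result (Theorem \ref{thm:rogers_taylor}) for $Q$-a.e. fixed sequence of Verblunskys. Concretely, I would introduce the event
\[
A := \{(\bfa, \theta) \in \Om : s_0(\mu_{\bfa}, \theta) = s_0\},
\]
so that the hypothesis reads $\P(A) = 1$. By construction of $\P$ we have the disintegration
\[
\P(A^c) \;=\; \int_{\Dinf} \mu_{\bfa}(A^c_{\bfa}) \; \rmd Q(\bfa), \qquad A^c_{\bfa} := \{\theta : (\bfa,\theta) \in A^c\}.
\]
Since $\mu_{\bfa}(A^c_{\bfa}) \geq 0$ and the integral vanishes, we conclude $\mu_{\bfa}(A^c_{\bfa}) = 0$, i.e. $\mu_{\bfa}(A_{\bfa}) = 1$, for $Q$-a.e.\ $\bfa$. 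Equivalently, for $Q$-a.e.\ $\bfa$ the local dimension $s_0(\mu_{\bfa}, \theta)$ equals $s_0$ at $\mu_{\bfa}$-a.e.\ point $\theta$. Theorem \ref{thm:rogers_taylor} applied to the deterministic measure $\mu_{\bfa}$ then yields that $\mu_{\bfa}$ has exact dimension $s_0$, for $Q$-a.e.\ $\bfa$, which is the desired conclusion.

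The only technical point to verify is that the set $A$ is genuinely in the product $\sigma$-algebra $\F$, so that both $\P(A)$ and the slice measures $\mu_{\bfa}(A_{\bfa}^c)$ make sense and the disintegration identity above is valid. For this I would check that the map $(\bfa, \theta) \mapsto s_0(\mu_{\bfa}, \theta)$ is jointly measurable. This factors through two measurability facts that are already recorded earlier: first, the Bernstein--Szeg\H{o} approximation (Proposition \ref{prop:BS}) gives that $\bfa \mapsto \mu_{\bfa}$ is measurable into the space of probability measures on $\bD$ with the weak topology; second, for each fixed $\eps > 0$ and each probability measure $\mu$, the map $\theta \mapsto \mu((\theta-\eps,\theta+\eps))$ is upper semicontinuous, so that $(\bfa,\theta) \mapsto \mu_{\bfa}((\theta-\eps,\theta+\eps))$ is jointly measurable. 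Taking a $\limsup$ along $\eps \downarrow 0$ through a countable sequence yields joint measurability of $D^s_{\mu_{\bfa}}(\theta)$, and $s_0(\mu_{\bfa}, \theta)$ is then obtained as a countable supremum of $s$'s for which $D^s_{\mu_{\bfa}}(\theta) = 0$.

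I do not expect any real obstacle in this argument; it is essentially a Fubini-type reduction combined with the deterministic Rogers--Taylor theorem. The mildly delicate piece is the joint measurability above, but it is standard. Everything else is a direct consequence of the way $\P$ is constructed from $Q$ and the random measure $\bfa \mapsto \mu_{\bfa}$.
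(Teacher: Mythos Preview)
Your proof is correct and is essentially the same as the paper's: both disintegrate the full-$\P$-measure event $\{s_0(\mu_{\bfa},\theta)=s_0\}$ over $Q$ to obtain that the integrand $\mu_{\bfa}(\{\theta: s_0(\mu_{\bfa},\theta)=s_0\})$ equals $1$ for $Q$-a.e.\ $\bfa$, and then invoke Theorem~\ref{thm:rogers_taylor}. You are somewhat more explicit than the paper about the joint measurability of $(\bfa,\theta)\mapsto s_0(\mu_{\bfa},\theta)$ and about the final appeal to Rogers--Taylor, but the argument is the same.
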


\begin{proof}
By the definition of exact dimension, the hypothesis is that for that any $\epsilon > 0$
\[
\P \left ( \bigcap_{\epsilon > 0} \{ (\bfalpha, \theta) : D_{\mu_{\bfalpha}}^{s_0 - \epsilon}(\theta) = 0, D_{\mu_{\bfalpha}}^{s_0 + \epsilon}(\theta) = \infty \} \right ) = 1.
\]
By definition of $\P$ the latter probability is simply
\[
\int_{\D^{\infty}} \mu_{\alpha} \left ( \bigcap_{\epsilon > 0} \{ \theta : D_{\mu_{\bfalpha}}^{s_0 - \epsilon}(\theta) = 0, D_{\mu_{\bfalpha}}^{s_0 + \epsilon}(\theta) = \infty \} \right ) \: \rmd Q(\bfalpha) = 1.
\]
But since the integrand is always less than or equal to one (since $\mu_{\bfalpha}$ is a probability measure) the integral can equal one only if the integrand is one $Q$-almost surely.
\end{proof}

\subsection{Rotationally Invariant Independent Modified Verbunsky coefficients}

\textbf{From this point on, we enforce Assumption \ref{as:indrot}}, namely that the Verblunsky coefficients $(\a_k)$ are independent and with a rotationally symmetric distribution. The first easy but extremely useful result concerns the modified Verblunsky coefficients \eqref{eq:modified_Verblunsky}. Recall that they are defined in Section \ref{sec:modifiedVerb} and allow to simplify many formulas. Under Assumption \ref{as:indrot}, using the modified instead of the original coefficients is very convenient because of the following.

\begin{lemma}(\cite[Lemma 2.1]{KillipStoiciu}) \label{lemma:KS_modified_Verblunsky_law}
The sequence of modified Verblunsky coefficients $(\gamma_k, k \geq 0)$ has the same law as the original sequence of Verblunsky coefficients $(\alpha_k, k \geq 0)$.
\end{lemma}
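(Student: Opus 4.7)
The plan is to reduce everything to the observation that each phase $B_k(1)$ is a unimodular quantity determined by $\alpha_0,\dots,\alpha_{k-1}$, so that $\gamma_k = \alpha_k B_k(1)$ is, conditionally on the earlier coefficients, just a deterministic rotation of the rotationally symmetric variable $\alpha_k$. The hypothesis that the $\alpha_k$ are independent and rotationally invariant then forces $(\gamma_k)$ to have the same joint law as $(\alpha_k)$.

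More precisely, I would set $\mathcal{F}_k = \sigma(\alpha_0,\dots,\alpha_{k-1})$ and first argue, by induction on $k$ using the recurrence \eqref{eqn:V_recursion} specialized to $\theta = 0$ (so $B_0(1)=1$ and $B_{n+1}(1) = B_n(1)(1-\overline{\alpha_n B_n(1)})/(1-\alpha_n B_n(1))$), that $B_k(1)$ is an $\mathcal{F}_k$-measurable unimodular random variable. Unimodularity is clear once one notes that $B_n(e^{i\theta})$ takes values on the unit circle; it also follows algebraically from the recurrence since the updating factor has the form $\bar w/w$. Thus, in the inductive step, if $B_k(1)$ is $\mathcal{F}_k$-measurable, so is $\gamma_k = \alpha_k B_k(1)$ and consequently $B_{k+1}(1)$, which is a measurable function of $\alpha_k$ and $B_k(1)$.

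Next I would exploit rotational invariance. Conditionally on $\mathcal{F}_k$, the variable $B_k(1)$ is a deterministic unit complex number $\lambda$, while $\alpha_k$ is independent of $\mathcal{F}_k$ with its original rotationally symmetric law; hence $\gamma_k \mid \mathcal{F}_k$ has the distribution of $\lambda \alpha_k$, which by rotation invariance equals the unconditional law of $\alpha_k$. Therefore $\gamma_k$ is independent of $\mathcal{F}_k$ and has the same marginal distribution as $\alpha_k$. Because $(\gamma_0,\dots,\gamma_{k-1})$ is $\mathcal{F}_k$-measurable by the inductive construction above, $\gamma_k$ is in particular independent of $(\gamma_0,\dots,\gamma_{k-1})$. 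Iterating over $k$ yields that the $\gamma_k$ are independent with the same marginals as the $\alpha_k$, i.e.\ $(\gamma_k)_{k\geq 0} \eqlaw (\alpha_k)_{k\geq 0}$.

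The only real obstacle is the bookkeeping showing that $B_k(1)$ is $\mathcal{F}_k$-measurable and unimodular, and this is immediate from the explicit recursion in Section \ref{sec:second_kind} together with the fact that $B_n$ is a finite Blaschke product. Everything else is a clean application of the ``rotate by an independent unit phase preserves rotationally symmetric law'' principle, applied one coordinate at a time.
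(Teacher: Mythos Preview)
Your proposal is correct and matches the paper's approach exactly: the paper notes that $B_k(1)$ is a unimodular quantity measurable with respect to $\alpha_0,\ldots,\alpha_{k-1}$, so that conditionally on the past $\gamma_k$ is a fixed rotation of the rotationally invariant $\alpha_k$ and hence has the same law independently of that past. Your write-up is in fact more detailed than the paper's own discussion, which simply sketches this idea and defers to \cite[Lemma 2.1]{KillipStoiciu}.
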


This result may seem slightly counterintuitive at first since $B_k(1)$ is itself a function of the $\alpha_k$, but the important properties is that $B_k(1)$ is a point on $\bd$ that is measureable with respect to $\alpha_0, \ldots, \alpha_{k-1}$, and hence, conditionally on $\alpha_0, \ldots, \alpha_{k-1}$, the quantity $\gamma_k$ is a fixed rotation of $\alpha_k$ and hence has the same law, independent of the actual value of $\alpha_0, \ldots, \alpha_{k-1}$. As a consequence we may choose to work with the modified Verblunsky coefficients instead of the original ones without changing the most important results. We will do so in most of the rest of the paper, although we will freely move between them at many different points. 

\subsection{Martingale Properties of Orthogonal Polynomials}

In this section we describe some particular martingales that arise from orthogonal polynomials and their relation to the Bernstein-Szeg\H{o} approximation.

\begin{lemma}\label{lemma:joint_rotation}
For each fixed $\lambda \in \bd$ the measure $\P$ is invariant under the transformation
\[
\left( \{ \alpha_n \}_{n=0}^{\infty}, e^{i \theta} \right) \mapsto \left( \{ \lambda^{n+1} \alpha_n \}_{n=0}^{\infty}, \overline{\lambda} e^{i \theta} \right).
\]
\end{lemma}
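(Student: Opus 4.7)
The plan is to verify the invariance directly on measurable rectangles $A \times B$, where $A \subset \Dinf$ and $B \subset \bD$ are Borel, and then extend to all of $\F$ by the usual $\pi$-$\lambda$ argument. Writing $T_\l : \bfa \mapsto \{\l^{n+1} \a_n\}_{n \geq 0}$ for the map on Verblunsky coefficients and unpacking the definition of $\P$, we get
\[
\P\bigl( T_\l \bfa \in A,\; \bar\l e^{i\th} \in B \bigr)
= \int_{T_\l^{-1} A} \mu_{\bfa}(\l B) \, \dQa,
\]
where $\l B = \{\l z : z \in B\}$. The goal is to massage this into $\int_A \mu_{\bfa}(B) \, \dQa = \P(A \times B)$.

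The first ingredient is Lemma \ref{lemma:rotated_Verblunskies}, which implies that $\mu_{\bfa}(\l \cdot)$ and $\mu_{T_\l \bfa}(\cdot)$ have the same Verblunsky coefficients and are therefore equal as measures on $\bD$. In particular, $\mu_{\bfa}(\l B) = \mu_{T_\l \bfa}(B)$, so the integral above becomes
\[
\int_{T_\l^{-1} A} \mu_{T_\l \bfa}(B) \, \dQa.
\]
The second ingredient is the rotation invariance of each coordinate of $Q$ guaranteed by Assumption \ref{as:indrot}: since each $\a_n$ is radially symmetric and the $\a_n$ are independent, the map $T_\l$ rotates each coordinate by the fixed phase $\l^{n+1}$ and so preserves $Q$. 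Changing variables $\bfa' = T_\l \bfa$ then yields
\[
\int_A \mu_{\bfa'}(B) \, \rmd Q(\bfa') = \P(A \times B),
\]
as required.

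There is essentially no obstacle here beyond bookkeeping: the only mild care needed is to match the direction of the rotation. One has to verify that $\mua(\l \cdot)$ in Lemma \ref{lemma:rotated_Verblunskies} denotes the set function $B \mapsto \mua(\l B)$ (rather than its pushforward under $z \mapsto \l z$), which is the convention consistent with the proof given there. Once this is fixed, the two ingredients slot together immediately. Finally, since rectangles generate $\F$, the invariance on rectangles extends to all Borel sets, completing the proof.
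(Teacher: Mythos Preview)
Your proof is correct and uses the same two ingredients as the paper: the $Q$-invariance of the coordinate rotation $T_\l$ (from Assumption~\ref{as:indrot}) and the identity $\mu_{\bfa}(\l\,\cdot) = \mu_{T_\l\bfa}(\cdot)$ from Lemma~\ref{lemma:rotated_Verblunskies}. The paper argues more tersely at the level of equality in law of the random measures, whereas you unpack the same argument explicitly on rectangles; the substance is identical.
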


\begin{proof}
By assumption, the sequences $\{ \alpha_n, n \geq 0 \}$ and $\{ \lambda^{n+1} \alpha_n, n \geq 0 \}$ have the same law, hence so too do the associated random measures. By Lemma \ref{lemma:rotated_Verblunskies} these measures are $\mu_{\bfalpha}(\cdot)$ and $\mu_{\bfalpha}(\lambda \cdot)$, hence the result follows.
\end{proof}

\begin{proposition}\label{prop:phi_martingale_prop}
Under the measure $\Pl$, the process $n \mapsto |\varphi_n(e^{i \theta})|^{-2}$ is a martingale with respect to the filtration $\F_{n-1}^{\theta} := \sigma(\theta, \alpha_0, \alpha_1, \ldots, \alpha_{n-1})$.
\end{proposition}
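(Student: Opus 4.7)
The plan is to establish the martingale property via a direct conditional expectation calculation based on the multiplicative recursion \eqref{eqn:phi_psi_mod_recursion}:
\[
|\varphi_{n+1}(e^{i \theta})|^{-2} = |\varphi_n(e^{i \theta})|^{-2} \, \frac{1 - |\alpha_n|^2}{|1 - \alpha_n B_n(e^{i \theta})|^2}.
\]
First I would verify the measurability and integrability prerequisites. By the Szeg\H{o} recurrence \eqref{eqn:szego_recurrence}, $\varphi_n(e^{i\theta})$ is a deterministic function of $\theta$ and $\alpha_0, \ldots, \alpha_{n-1}$, so $|\varphi_n(e^{i\theta})|^{-2}$ is $\F_{n-1}^{\theta}$-measurable. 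The same is true of $B_n(e^{i\theta})$, which in addition satisfies $|B_n(e^{i\theta})| = 1$ by \eqref{eqn:Bn_defn}. Integrability under $\Pl$ is immediate from Bernstein--Szeg\H{o} (Proposition \ref{prop:BS}): since $|\varphi_n(e^{i\theta})|^{-2} \dth/2\pi$ is a probability measure for every fixed $\bfalpha$, and $\theta$ is uniform and independent of $\bfa$ under $\Pl$, we have $\E_{\Pl}[|\varphi_n(e^{i\theta})|^{-2}] = 1$.

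The core of the argument is to show that the multiplicative increment has conditional expectation one, i.e.
\[
\E_{\Pl}\!\left[\frac{1 - |\alpha_n|^2}{|1 - \alpha_n B_n(e^{i \theta})|^2} \,\bigg|\, \F_{n-1}^{\theta}\right] = 1.
\]
Under $\Pl$, Assumption \ref{as:indrot} guarantees that $\alpha_n$ is independent of $(\alpha_0, \ldots, \alpha_{n-1})$, and independence of $\theta$ from the Verblunskies under $\Pl$ upgrades this to independence of $\alpha_n$ from all of $\F_{n-1}^{\theta}$. Hence the conditional expectation reduces to $\E[g(\alpha_n, w)]$ evaluated at $w = B_n(e^{i\theta}) \in \bd$, where $g(\alpha,w) = (1-|\alpha|^2)/|1-\alpha w|^2$. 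Since $|w|=1$, rotational invariance of the law of $\alpha_n$ (Assumption \ref{as:indrot} again) gives $\alpha_n w \overset{(d)}{=} \alpha_n$, so the $w$-dependence drops out and we are left with $\E[(1-|\alpha_n|^2)/|1-\alpha_n|^2] = \E[P_{\D}(\alpha_n, 1)]$.

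The last step uses the harmonicity of the Poisson kernel. Writing $\alpha_n = |\alpha_n| e^{iU}$ with $U$ uniform on $[0,2\pi)$ and independent of $|\alpha_n|$, and conditioning on the radius,
\[
\E[P_{\D}(\alpha_n,1) \mid |\alpha_n|] = \int_0^{2\pi} P_{\D}(|\alpha_n| e^{i\phi}, 1)\, \frac{d\phi}{2\pi} = P_{\D}(0,1) = 1,
\]
by the mean value property applied to the bounded harmonic function $z \mapsto P_{\D}(z,1)$ on $\D$. Taking expectation in $|\alpha_n|$ gives $\E[P_{\D}(\alpha_n,1)] = 1$, which plugs back in to yield $\E_{\Pl}[|\varphi_{n+1}(e^{i\theta})|^{-2}\mid \F_{n-1}^{\theta}] = |\varphi_n(e^{i\theta})|^{-2}$, as required. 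The only real subtlety is the need to work under $\Pl$ rather than $\P$: the argument uses crucially that $\theta$ is independent of the Verblunsky coefficients, which is precisely what distinguishes $\Pl$ from $\P$ and which makes the rotational-symmetry reduction $\alpha_n w \overset{(d)}{=} \alpha_n$ legitimate after conditioning.
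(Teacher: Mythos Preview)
Your proof is correct and follows essentially the same route as the paper: multiplicative recursion for $|\varphi_n(e^{i\theta})|^{-2}$, independence of $\alpha_n$ from $\F_{n-1}^{\theta}$ under $\Pl$, rotation invariance to eliminate the $\F_{n-1}^{\theta}$-measurable phase, and harmonicity of the Poisson kernel to evaluate the remaining expectation as $1$. The only cosmetic difference is that the paper phrases the increment via the modified Verblunskies $\gamma_n e^{i\delta_n(\theta)}$ rather than $\alpha_n B_n(e^{i\theta})$; your direct use of $\alpha_n$ is arguably cleaner, since $\alpha_n$ is manifestly independent of $\F_{n-1}^{\theta}$, and your explicit integrability check via Bernstein--Szeg\H{o} is a welcome addition.
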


\begin{proof}
First observe that by the recursions \eqref{eqn:phi_psi_prod_formula} and \eqref{eqn:relative_Prufer_phase} the variables $\varphi_n(e^{i \theta})$ and $\delta_n(\theta)$ are measurable with respect to $\F_{n-1}^{\theta}$ (the shift in the indexing is because $\varphi_0$ and $\delta_0$ are deterministic by convention, while $\varphi_1$ and $\delta_1$ are determined by the Verblunsky coefficient $\alpha_0$, and so on). Using this measurability and recursion \eqref{eqn:phi_psi_prod_formula} we have the relation
\[
\E_{\Pl} \left [ \left. | \varphi_{n+1}(e^{i \theta})|^{-2} \right | \F_{n-1}^{\theta} \right ] = |\varphi_n(e^{i \theta})|^{-2} \E_{\Pl} \left[ \left. P_{\D}(\gamma_n e^{i \delta_n(\theta)}, 1) \right| \F_{n-1}^{\theta} \right].
\]
In the remaining expectation the term $\delta_n(\theta)$ is $\F_{n-1}$-measurable, whereas $\gamma_n$ is independent of $\F_{n-1}$. Recall as well that $\gamma_n$ has a rotationally invariant law, by Lemma \ref{lemma:KS_modified_Verblunsky_law}. Therefore
\[
\E_{\Pl} \left[ \left. P_{\D}(\gamma_n e^{i \delta_n(\theta)}, 1 ) \right| \F_{n-1}^{\theta} \right] = \E_{\Pl} \left[  P_{\D}(\gamma_n, 1 ) \right] = \E_Q \left[ P_{\D} (0,1) \right] = 1.
\]
The second-to-last equality uses that $\gamma_n$ is rotation invariant and that $z \mapsto P_{\D}(z, 1)$ is harmonic, so that its integral on a circle is just the value at the center, i.e. $P_{\D}(0,1) = 1$.
\end{proof}

\begin{remark}\label{rmk:filtration}
To be pedantic again, the filtration $\F_n^{\theta}$ is the Borel $\sigma$-algebra generated by the $\theta$ variable and the first $n+1$ elements of a sequence in $\D^{\infty}$, i.e.
\[
\F_n^{\theta} = \sigma \left( \bigtimes_{i=0}^{n} \mathcal{B}(\D) \times \bigtimes_{i=n+1}^{\infty} \{ \emptyset, \D \} \times \mathcal{B}(\bd) \right),
\]
where $\mathcal{B}(\D)$ is the Borel $\sigma$-algebra for $\D$, $\mathcal{B}(\bd)$ is the Borel $\sigma$-algebra for $\bd$, and $\{\emptyset, \D \}$ is the trivial $\sigma$-algebra for $\D$. Note that these $\F_n^{\theta}$ generate the entire $\sigma$-algebra for $\Omega$, i.e.
\[
\F = \sigma \left( \cup_{n=1}^{\infty} \F_n^{\theta} \right).
\]
For consistency of notation we write $\F_{-1}^{\theta}$ to be the $\sigma$-algebra for $\theta$, i.e $\F_{-1}^{\theta} = \sigma(\times_{i \geq 0} \{\emptyset, \D \} \times \mathcal{B}(\bd) )$. Forgetting the $\theta$ variable entirely we can also consider the smaller $\sigma$-algebras
\[
\F_n = \sigma \left( \bigtimes_{i=0}^n \mathcal{B}(\D) \times \bigtimes_{i=n+1}^{\infty} \{ \emptyset, \D \} \times \{ \emptyset, \bd \} \right).
\]
Slightly abusing notation, we will also think of $(\F_n)$ as the standard filtration for $\Dinf$.

\end{remark}

\begin{remark}\label{rmk:weak_martingale_result}
There is a slightly weaker version of Proposition \ref{prop:phi_martingale_prop} that at first may seem more natural: for any fixed $\theta$ a virtually identical argument shows that $|\varphi_n(e^{i \theta})|^{-2}$ is a martingale with respect to $\F_n$, under the measure $Q$. This gives the relation
\begin{equation} \label{eqn:weak_martingale_relation}
\E_{Q} \left[ \left. |\varphi_{n+1}(e^{i \theta})|^{-2} \right| \F_{n-1} \right] = |\varphi_n(e^{i \theta})|^{-2} \quad Q-\textrm{a.s.}
\end{equation}
However note that there is a null set involved in this identity, and by fixing $\theta$ the null set may depend on $\theta$. The union of these null sets over the uncountably many $\theta$ may become an event of positive probability. Proposition \ref{prop:phi_martingale_prop} avoids this by working with respect to the joint measure and gives that there is only one single null set in $\D^{\infty} \times \bd$ on which the martingale relation does not hold. Also note that in the proof of Proposition \ref{prop:phi_martingale_prop} the Lebesgue measure $\rmd \theta/2\pi$ could have been replaced by any fixed probability measure on $\bd$. This determines what measure the null set is with respect to, but for our purposes, the Lebesgue measure will be sufficient.
\end{remark}

The latter remark is useful for the next result, which is noteworthy because it provides a probabilistic proof of the convergence part of the Bernstein-Szeg\H{o} approximation.

\begin{corollary}\label{cor:martingale_bernstein_szego}
For independent and rotationally invariant Verblunsky coefficients, the probability measures $|\varphi_n(e^{i \theta})|^{-2} \, \rmd \theta/2 \pi$ almost surely converge (weakly) as $n \to \infty$.
\end{corollary}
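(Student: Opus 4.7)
The plan is to reduce weak convergence of $\mu_n := |\varphi_n(e^{i \theta})|^{-2} \rmd \theta/(2\pi)$ to the almost sure convergence of the integrals $X_n(f) := \int_{\bd} f(e^{i \theta}) \, \rmd \mu_n(\theta)$ for a countable collection of continuous test functions $f$, exploiting the martingale structure provided by Proposition \ref{prop:phi_martingale_prop}.

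First, I would fix a real-valued $f \in C(\bd)$ and show that $(X_n(f))_{n \geq 0}$ is a bounded $Q$-martingale with respect to the filtration $(\F_n)$ of Remark \ref{rmk:filtration}. For any bounded $\F_{n-1}$-measurable $Z$, Fubini's theorem and the definition of $\Pl$ give
\[
\E_Q[X_{n+1}(f) Z] = \E_{\Pl}\bigl[f(e^{i \theta}) |\varphi_{n+1}(e^{i \theta})|^{-2} Z\bigr].
\]
Since $f(e^{i \theta}) Z$ is bounded and $\F_{n-1}^{\theta}$-measurable, Proposition \ref{prop:phi_martingale_prop} replaces $|\varphi_{n+1}|^{-2}$ by $|\varphi_n|^{-2}$ on the right, yielding $\E_Q[X_{n+1}(f) Z] = \E_Q[X_n(f) Z]$. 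Uniform boundedness $|X_n(f)| \leq \|f\|_\infty$ follows from the fact that each $\mu_n$ is a probability measure (the easy part of Proposition \ref{prop:BS}). The martingale convergence theorem then gives $Q$-a.s.\ convergence of $X_n(f)$.

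Next, fix a countable sup-norm dense subset $\{f_k\}_{k \geq 1} \subset C(\bd)$ (which exists because $\bd$ is compact metric, so $C(\bd)$ is separable). By countable intersection of null sets, outside a single $Q$-null set $N$ the sequence $X_n(f_k)$ converges to a limit $X_\infty(f_k)$ simultaneously for every $k$. Outside $N$, the family $(\mu_n)$ is tight (probability measures on the compact space $\bd$), so by Prokhorov any subsequence admits a further subsequence converging weakly to some probability measure $\nu$. For such a subsequential limit, $\int f_k \, \rmd \nu = X_\infty(f_k)$ for every $k$, and density of $\{f_k\}$ in $C(\bd)$ together with the Riesz--Markov theorem forces $\nu$ to be uniquely determined. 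Hence all subsequential limits coincide and $\mu_n$ converges weakly $Q$-a.s.

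The only real subtlety is that the martingale convergence theorem produces, for each fixed $f$, a $Q$-null exceptional set depending on $f$, and weak convergence requires convergence of $X_n(f)$ for uncountably many $f$; separability of $C(\bd)$ combined with tightness is the standard device that resolves this. Passage to complex-valued $f$ is immediate by splitting into real and imaginary parts.
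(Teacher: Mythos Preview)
Your proof is correct and follows essentially the same approach as the paper: both show that $\int f\,\rmd\mu_n$ is a bounded $Q$-martingale (via Proposition \ref{prop:phi_martingale_prop} and the fact that $\mu_n$ is a probability measure), invoke martingale convergence, and then pass to a countable dense family in $C(\bd)$ to upgrade to almost sure weak convergence. You give more detail than the paper does---spelling out the Fubini/$\Pl$ step for the martingale identity and the tightness/Prokhorov argument for identifying the limit---whereas the paper simply cites \cite{KallenbergRM} for the last step; one minor indexing slip is that $X_n(f)$ is $\F_{n-1}$-measurable, so the filtration is $(\F_{n-1})$ rather than $(\F_n)$, but your computation already reflects this.
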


\begin{proof}
Let $f : \bd \to \R$ be continuous and hence bounded. Using Proposition \ref{prop:phi_martingale_prop}, it is straightforward to show that the process
\begin{align*}
n \mapsto \int_{\bd} f(e^{i \theta}) |\varphi_n(e^{i \theta})|^{-2} \: \frac{\dth}{2 \pi}
\end{align*}
is a martingale with respect to $Q$ and $(\F_{n-1}, n \geq 0)$, and since it is bounded (because $f$ is and $|\varphi_n|^{-2} \dth/2\pi$ is a probability measure) the martingale convergence theorem implies that it converges almost surely. Since the almost sure convergence holds simultaneously for a countable and dense collection of $f$, this implies a.s. weak convergence (see \cite{KallenbergRM}).

\end{proof}

Note that this only provides a probabilistic proof of the convergence part of the Bernstein-Szeg\H{o} approximation. We do not have a separate proof that $|\varphi_n(e^{i \theta})|^{-2} \: \dth/2\pi$ are probability measures (which is implicitly used in the above proof), nor are we able to identify the limiting measure as being the spectral measure. However, the Bernstein-Szeg\H{o} approximation implies that the conditional expectation of the spectral measure $\mu_{\bfalpha}$ given $\F_n$ is the measure $|\varphi_{n+1}(e^{i \theta})|^{-2} \: \dth/2 \pi$, in the following sense.

\begin{lemma}\label{lem:PAB}
Let $F : \D^{\infty} \times \bd \to \R$ be continuous and $\F_n^{\theta}$-measurable. Then for $n \geq -1$
\begin{align*}
\E_{\P} \left[ \left. F(\bfalpha, \theta) \right| \F_n \right] = \E_Q \left[ \left. \int_{\bd} F(\bfalpha, \theta) \: \rmd \mu_{\bfalpha}(\theta) \right| \F_n \right] = \int_{\bd} F(\bfalpha, \theta) |\varphi_{n+1}(e^{i \theta})|^{-2} \: \frac{\dth}{2 \pi}.
\end{align*}
In particular, for $A \in \F_n$ and $B \in \mathcal{B}(\bd)$ we have
\begin{align*}
\P(A \times B) = \int_{A} \int_{B} |\varphi_{n+1}(e^{i \theta})|^{-2} \, \frac{\dth}{2 \pi} \: \rmd Q(\bfalpha).
\end{align*}
\end{lemma}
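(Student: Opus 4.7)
The first equality is essentially the definition of $\P$. By construction \eqref{eqn:joint_prob_measure}, for any bounded $\F_n$-measurable $G:\Dinf\to\R$ we have
\[
\E_\P[F(\bfa,\theta)\,G(\bfa)] = \int_{\Dinf} G(\bfa)\int_\bd F(\bfa,\theta)\,\dmua(\theta)\,\rmd Q(\bfa) = \E_Q\!\left[G(\bfa)\int_\bd F(\bfa,\theta)\,\dmua(\theta)\right],
\]
since $F$ is $\F_n^\theta$-measurable (so integrable against $\mua$ and no extraneous coordinates appear). As $G$ ranges over bounded $\F_n$-measurable functions, this identifies the conditional expectation $\E_\P[F|\F_n]$ with $\E_Q\!\left[\int_\bd F\,\dmua|\F_n\right]$.

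For the second (substantive) equality, I would write $F(\bfa,\theta) = \tilde F(\a_0,\dots,\a_n,\theta)$ using $\F_n^\theta$-measurability and introduce a truncation $m \geq n+1$. The two ingredients are the martingale property (Proposition \ref{prop:phi_martingale_prop}) and the Bernstein-Szeg\H{o} approximation (Corollary \ref{cor:martingale_bernstein_szego}). Iterating the martingale relation gives $\E_\Pl[|\vphi_m(e^{i\th})|^{-2}|\F_n^\theta] = |\vphi_{n+1}(e^{i\th})|^{-2}$ for all $m \geq n+1$. Using this together with the tower property and $\F_n\subset\F_n^\theta$:
\[
\E_\Pl\!\left[\tilde F(\a_0,\dots,\a_n,\theta)\,|\vphi_m(e^{i\th})|^{-2}\,|\,\F_n\right] = \E_\Pl\!\left[\tilde F(\a_0,\dots,\a_n,\theta)\,|\vphi_{n+1}(e^{i\th})|^{-2}\,|\,\F_n\right],
\]
and since under $\Pl$ the coordinate $\theta$ is uniform and independent of $\bfa$, conditioning on $\F_n$ makes both factors functions of $\theta$ alone, so the right side equals $\int_\bd \tilde F\,|\vphi_{n+1}|^{-2}\,\dth/(2\pi)$. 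Crucially the result does not depend on $m$.

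On the other hand, rewriting the left side under $\Pl = Q\otimes \dth/(2\pi)$ via Fubini yields $\E_Q[\int_\bd \tilde F\,|\vphi_m|^{-2}\,\dth/(2\pi)\,|\,\F_n]$. Now I would let $m\to\infty$: $Q$-almost surely, the continuity of $\theta\mapsto\tilde F(\a_0,\dots,\a_n,\theta)$ together with the weak convergence $|\vphi_m|^{-2}\,\dth/(2\pi)\to \dmua$ gives
\[
\int_\bd \tilde F(\a_0,\dots,\a_n,\theta)\,|\vphi_m(e^{i\th})|^{-2}\,\frac{\dth}{2\pi} \longrightarrow \int_\bd \tilde F(\a_0,\dots,\a_n,\theta)\,\dmua(\theta),
\]
and since each $|\vphi_m|^{-2}\,\dth/(2\pi)$ is a probability measure, the sequence is uniformly bounded by $\|\tilde F\|_\infty$, permitting dominated convergence inside $\E_Q[\cdot|\F_n]$. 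Equating the limit with the $m$-independent right side gives the desired identity. The ``In particular'' statement then follows by taking $F(\bfa,\theta) = \onef_{A}(\bfa)\onef_B(\theta)$ — this indicator is not continuous, but a standard monotone-class/Urysohn approximation extends the identity from continuous bounded $\F_n^\theta$-measurable $F$ to all bounded $\F_n^\theta$-measurable $F$. The main obstacle is really just the bookkeeping between the filtrations $\F_n$ and $\F_n^\theta$ and the boundedness needed for dominated convergence; the mathematical core is the combination of the Doob martingale property on $|\vphi_m|^{-2}$ with the Bernstein-Szeg\H{o} identification of its $m\to\infty$ limit as $\mua$.
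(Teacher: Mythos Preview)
Your argument is correct and follows essentially the same route the paper indicates (it only says ``very similar to that of Corollary \ref{cor:martingale_bernstein_szego}''): use the martingale property of $|\varphi_m(e^{i\theta})|^{-2}$ to freeze the conditional expectation at level $n+1$, then pass to the limit $m\to\infty$ via Bernstein--Szeg\H{o} weak convergence and dominated convergence. Your handling of the bookkeeping between $\F_n$ and $\F_n^{\theta}$, and the monotone-class remark for the ``In particular'' part, are exactly what is needed to flesh out the one-line proof in the paper.
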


The proof is very similar to that of Corollary \ref{cor:martingale_bernstein_szego}. Finally we note that there are probabilistic proofs concerning the absolute continuity and singularity of the spectral measure $\mu_{\bfalpha}$ with respect to Lebesgue measure.

\begin{proposition}\label{prop:spectral_lebesgue_decomp}
With respect to the measure $\rmd \Pl$,
\begin{enumerate}[(i)]
\item the event $\left \{ |\varphi_n(e^{i \theta})|^{-2} \to 0 \right \}$ has either probability 1 or probability 0;
\item if $|\varphi_n(e^{i \theta})|^{-2} \to 0$ with probability 1, then with $Q$-probability one, $\mua$ is singular with respect to Lebesgue measure;
\item if $|\varphi_n(e^{i \theta})|^{-2}$ converges in $L^1$, then with $Q$-probability 1, $\mua$ is absolutely continuous with respect to the Lebesgue measure.
\end{enumerate}
\end{proposition}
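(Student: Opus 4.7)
The strategy is to identify $g_n := |\vphi_n(e^{i\theta})|^{-2}$ with the Radon-Nikodym density of $\P|_{\F_{n-1}^\theta}$ with respect to $\Pl|_{\F_{n-1}^\theta}$, which is the content of Lemma \ref{lem:PAB}. Combined with Proposition \ref{prop:phi_martingale_prop}, this makes $(g_n)$ a non-negative $\Pl$-martingale, so it converges $\Pl$-almost surely to some $g_\infty \geq 0$. All three parts will follow from relating the behavior of $g_\infty$ to absolute continuity and singularity of $\mua$.

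For (i), I would reduce to $\theta = 0$ via rotational invariance: by Lemma \ref{lemma:rotated_Verblunskies} and the rotational symmetry of $Q$, the $Q$-distribution of $g_n$ is independent of $\theta$. The product formula \eqref{eqn:phi_psi_prod_formula} together with Lemma \ref{lemma:KS_modified_Verblunsky_law} then expresses $g_n$ at $z=1$ as $\prod_{k=0}^{n-1} P_\D(\gamma_k, 1)$, a product of independent non-negative random variables with mean one (by harmonicity of the Poisson kernel and the rotational symmetry of the $\gamma_k$). Thus $\{g_n \to 0\} = \{\sum_k \log P_\D(\gamma_k, 1) \to -\infty\}$ is a tail event in an independent sequence, and Kolmogorov's $0$-$1$ law gives $Q$-probability zero or one at $\theta=0$; integrating over $\theta$ yields the same dichotomy under $\Pl$.

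For (iii), $L^1$-convergence of $g_n \to g_\infty$ under $\Pl$ gives $g_n = \E_\Pl[g_\infty \mid \F_{n-1}^\theta]$, and Lemma \ref{lem:PAB} then yields $\P(A) = \int_A g_\infty \, \rmd \Pl$ for every $A \in \cup_n \F_n^\theta$. A monotone class argument extends this to the full $\s$-algebra, so $\rmd \P = g_\infty \, \rmd \Pl$. Disintegrating along the $Q$-marginal on $\D^\infty$ then gives $\rmd \mua(\theta) = g_\infty(\bfa, \theta) \, \rmd \theta / (2\pi)$ for $Q$-almost every $\bfa$, which is exactly the claimed absolute continuity.

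Part (ii) is the main obstacle. Assuming $g_\infty = 0$ $\Pl$-almost surely, my plan is to show directly that $\P(\{g_\infty = 0\}) = 0$ and combine this with the hypothesis $\Pl(\{g_\infty = 0\}) = 1$. For $\delta > 0$ and $N \leq m$, the set $C_m := \{g_k < \delta \text{ for all } N \leq k \leq m\}$ lies in $\F_{m-1}^\theta$ and satisfies $g_m < \delta$ on $C_m$, so Lemma \ref{lem:PAB} gives
\[
\P(C_m) = \E_\Pl[g_m \onef_{C_m}] \leq \delta.
\]
Letting $m \to \infty$ and then $N \to \infty$ yields $\P(\{g_\infty < \delta\}) \leq \delta$, and $\delta \downarrow 0$ gives $\P(\{g_\infty = 0\}) = 0$. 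Disintegrating this together with the hypothesis along $Q$ shows that for $Q$-almost every $\bfa$, the slice $\{\theta : g_\infty(\bfa, \theta) = 0\}$ has full Lebesgue measure but zero $\mua$-measure, forcing $\mua$ to be concentrated on a Lebesgue-null set, i.e. singular with respect to Lebesgue.
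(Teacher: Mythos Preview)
Your proof is correct and follows the same overall strategy as the paper: identify $g_n = |\varphi_n(e^{i\theta})|^{-2}$ as the Radon--Nikodym derivative of $\P$ against $\Pl$ on $\F_{n-1}^\theta$, use its $\Pl$-a.s.\ convergence, and then disintegrate along the $Q$-marginal. The execution differs in two mildly interesting ways. For (i), the paper argues directly that $\{g_n \to 0\}$ is a tail event for $\F_n^\theta$ and applies Kolmogorov's $0$--$1$ law to the product measure $\Pl$; your reduction to $\theta = 0$ via rotational invariance is arguably cleaner, since at $\theta = 0$ the product $\prod_k P_\D(\gamma_k,1)$ involves genuinely independent factors, whereas for general $\theta$ the factors $P_\D(\gamma_k e^{i\delta_k(\theta)},1)$ carry dependence through $\delta_k(\theta)$, making the tail claim less transparent. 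For (ii)--(iii), the paper invokes the classical martingale Lebesgue decomposition $\P = X\,\rmd\Pl + \P(\,\cdot\, \cap \{X = \infty\})$ as a black box, whereas you prove the key fact $\P(g_n \to 0) = 0$ by hand via the bound $\P(C_m) = \E_\Pl[g_m \onef_{C_m}] \leq \delta$; this is a nice self-contained argument. One small presentational point: in (ii) you should phrase the conclusion as $\P(\{g_n \to 0\}) \leq \delta$ rather than $\P(\{g_\infty < \delta\}) \leq \delta$, since $g_\infty$ is only defined as a limit $\Pl$-almost surely and $\P$ may be singular with respect to $\Pl$; your sets $C_m$ already handle this correctly.
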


\begin{proof}
For ease of notation write $X(\th) = \limsup_{n \to \infty} |\varphi_n(e^{i \theta})|^{-2}$. By the martingale property of Proposition \ref{prop:phi_martingale_prop} we know that $|\varphi_n(e^{i \theta})|^{-2}$ converges $\Pl$-almost surely as $n \to \infty$, and in particular $\Pl(X < \infty) = 1$. To prove part (i), note that the event $\{X = 0\}$ is a tail event for the filtration $\F_n^{\theta}$ (by, for example, equation \eqref{eqn:phi_psi_prod_formula} and the positivity of the Poisson kernel), and since $\mathbf{P}$ is a product measure, we have by the Kolmogorov 0-1 Law that $\mathbf{P}(X = 0) \in \{0,1 \}$.

For parts (ii) and (iii) first define measures $\P_{ac}$ and $\P_s$ by
\begin{align*}
\rmd\P_{ac} = X \: \rmd \Pl, \quad \P_s(A) = \P(A \cap \{ X = \infty \}),
\end{align*}
for Borel $A \subset \Om$. Since $\Pl(X < \infty) = 1$, we have $\P_{ac} \ll \Pl$ and $\P_s \perp \mathbf{P}$ (hence the reason for the names). By classical arguments \cite[Theorem 5.3.3]{Durrett:book}, we have that $\P = \P_{ac} + \P_s$, which is the Lebesgue decomposition of $\P$ with respect to $\mathbf{P}$. In case (ii) we have that $\Pl(X=0) = 1$ and thus $\P$ is singular with respect to $\Pl$. Then, by a straightforward Fubini argument, we see that $\Leb(\{ e^{i \theta} \in \bd : (\bfa, e^{i \theta}) \in \Om_0 \}) = 0$, $Q$-a.e., as desired. Part (iii) is handled similarly.

\end{proof}

\subsection{The Conditional Measure for Verblunsky Coefficients}

Now we turn to the task of describing the conditional distribution $Q_{\theta}$ for the joint measure $\P$. The intuition comes from the Bernstein-Szeg\H{o} approximation, which says that
\[
\rmd \P(\bfalpha, \theta) = \rmd Q(\bfalpha) \: \rmd \mua(\theta) = \lim_{n \to \infty} \rmd Q(\bfalpha) |\varphi_n(e^{i \theta})|^{-2} \dth/2\pi.
\]
The next lemma shows that the marginal distribution of $\theta$ is Lebesgue, and therefore the formula above suggests that
\[
\rmd Q_{\theta}(\bfalpha) = \lim_{n \to \infty} |\varphi_n(e^{i \theta})|^{-2} \: \rmd Q(\bfalpha).
\]
A proof of this relation follows after. It consists of showing that the limit above exists and that it agrees with the conditional measure.

\begin{lemma}\label{lemma:theta_marginal}
Under the measure $\P$ the marginal distribution of $\theta$ is Lebesgue.
\end{lemma}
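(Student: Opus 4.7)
The cleanest plan is to exploit the rotational invariance established just above in Lemma \ref{lemma:joint_rotation}. Under Assumption \ref{as:indrot}, for every $\lambda \in \bd$ the joint measure $\P$ is invariant under $(\bfalpha, e^{i\theta}) \mapsto (\{\lambda^{n+1}\alpha_n\}, \bar\lambda e^{i\theta})$. I would simply project this invariance onto the $\theta$-coordinate: the marginal law of $\theta$ under $\P$ is then invariant under rotation by $\bar\lambda$ for \emph{every} $\lambda \in \bd$. Since normalized Haar measure is the unique rotation-invariant Borel probability measure on the compact group $\bd$, this forces the marginal to be $\dth/2\pi$.

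As a sanity check, I would also note that a direct probabilistic proof is available and illustrates the mechanism. For continuous bounded $f \colon \bd \to \R$, the Bernstein-Szeg\H{o} approximation (Proposition \ref{prop:BS}) gives, pointwise in $\bfalpha$,
\[
\int f(e^{i\theta}) |\varphi_n(e^{i\theta})|^{-2} \, \frac{\dth}{2\pi} \xrightarrow{n \to \infty} \int f(e^{i\theta}) \, \rmd \mu_{\bfalpha}(\theta),
\]
with the left-hand side uniformly bounded by $\|f\|_{\infty}$. Dominated convergence against $\rmd Q$ followed by Fubini then yields
\[
\E_{\P}[f(\theta)] = \lim_{n \to \infty} \int f(e^{i\theta}) \, \E_Q\!\left[|\varphi_n(e^{i\theta})|^{-2}\right] \frac{\dth}{2\pi}.
\]
For each fixed $\theta$, iterating \eqref{eqn:phi_psi_prod_formula} together with the rotational invariance of the $\gamma_k$ (cf.\ the proof of Proposition \ref{prop:phi_martingale_prop}) shows that $\E_Q[|\varphi_n(e^{i\theta})|^{-2}] = 1$ identically in $n$ and $\theta$, with no exceptional null set since the argument is purely algebraic in $\theta$. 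Hence $\E_{\P}[f(\theta)] = \int f(e^{i\theta})\,\dth/2\pi$, which identifies the marginal.

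I expect there to be essentially no obstacle here: both routes are short. The only mild care needed in the second approach is to ensure the martingale identity holds pointwise in $\theta$ rather than almost surely, a distinction flagged in Remark \ref{rmk:weak_martingale_result}, but this is automatic because for fixed $\theta$ each conditional expectation in the recursion is computed exactly via the harmonicity of $z \mapsto P_{\D}(z,1)$ and the rotational invariance of $\gamma_n$. I would present the rotation-invariance argument as the proof itself, since it is more conceptual and uses only material the reader has just seen.
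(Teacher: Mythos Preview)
Your proposal is correct and matches the paper's own treatment almost exactly. The paper gives two proofs: the first is precisely your rotation-invariance argument via Lemma \ref{lemma:joint_rotation} and uniqueness of Haar measure, and the second is a close variant of your sanity check, using Lemma \ref{lem:PAB} with $\varphi_0 = 1$ in place of your dominated-convergence-plus-martingale computation.
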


\begin{proof}[Proof 1]
Fix $\lambda \in \bd$. By Lemma \ref{lemma:joint_rotation} the random variables $e^{i \theta}$ and $\lambda e^{i \theta}$ have the same law under $\P$, i.e. the law is invariant under all rotations. Therefore it must be Lebesgue.
\end{proof}

\begin{proof}[Proof 2]
By definition of $\P$ the marginal measure for $\theta$ is the probability measure $\nu$ on $\bd$ with
\[
\nu(B) = \P(\D^{\infty} \times B) = \E_{Q} \left[ \mu_{\bfalpha}(B) \right]
\]
for all Borel subsets $B$ of $\bd$. However, since $\D^{\infty} \in \F_{-1}$ the second part of Lemma \ref{lem:PAB} implies
\[
\P(\D^{\infty} \times B) = \int_{\D^{\infty}} \int_{B} |\varphi_0(e^{i \theta})|^{-2} \: \frac{\dth}{2 \pi} \: \rmd Q(\bfalpha) = \int_{B} \frac{\dth}{2 \pi}.
\]
The last equality follows from $\varphi_0 = 1$.
\end{proof}

Note the first proof is based on independence and rotation invariance of the Verblunsky coefficients while the second uses the martingale property. If the martingale property can be established without assuming independence and rotation invariance then the second proof still goes through.

\begin{proposition}\label{prop:Q_theta_exists}
For each $\theta$, the limiting measure
\[
\rmd Q_{\theta}(\bfalpha) := \lim_{n \to \infty} |\varphi_n(e^{i \theta})|^{-2} \: \rmd Q(\bfalpha)
\]
exists, and, under $\P$, is the conditional measure of $\bfa$ given $\theta$.
\end{proposition}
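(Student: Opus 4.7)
The plan is to define $Q_\theta$ as a consistent family of probability measures on the cylindrical $\sigma$-algebras $(\F_m)$ via the martingale property, extend via Kolmogorov, and then identify the extension with the regular conditional distribution using Lemma \ref{lem:PAB} together with Lemma \ref{lemma:theta_marginal}.

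First I would fix $\theta \in \bd$ and use the martingale relation from Remark \ref{rmk:weak_martingale_result} and the tower property to show that, for any $A \in \F_m$ and any $n \geq m+1$,
\[
\int_A |\varphi_n(e^{i \theta})|^{-2} \rmd Q(\bfa) = \int_A |\varphi_{m+1}(e^{i \theta})|^{-2} \rmd Q(\bfa).
\]
Thus the measures $|\varphi_n(e^{i \theta})|^{-2} \rmd Q$ are eventually constant on each cylindrical event, which invites the definition
\[
Q_\theta(A) := \int_A |\varphi_{m+1}(e^{i \theta})|^{-2} \rmd Q(\bfa), \quad A \in \F_m,
\]
independent of the choice of $m$ and consistent under the inclusion $\F_m \subset \F_{m+1}$ (again by the tower property). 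The total mass equals $\E_Q[|\varphi_{m+1}(e^{i \theta})|^{-2}] = \E_Q[|\varphi_0(e^{i \theta})|^{-2}] = 1$, so each restriction is a probability measure, and the Kolmogorov extension theorem on the Polish space $\D^\infty$ produces a unique probability measure $Q_\theta$ on the full Borel $\sigma$-algebra that extends these cylindrical values. The limit statement $\rmd Q_\theta = \lim_n |\varphi_n(e^{i \theta})|^{-2} \rmd Q$ is then understood in the weak/cylindrical sense: for every bounded $\F_m$-measurable $g : \D^\infty \to \R$, the integrals $\int g \, |\varphi_n|^{-2} \rmd Q$ are eventually constant in $n$ and equal $\int g \rmd Q_\theta$.

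To identify $Q_\theta$ as the regular conditional distribution of $\bfalpha$ given $\theta$ under $\P$, I would apply Lemma \ref{lem:PAB}, which for $A \in \F_m$ and $B \in \mathcal{B}(\bd)$ gives
\[
\P(A \times B) = \int_A \int_B |\varphi_{m+1}(e^{i \theta})|^{-2} \frac{\dth}{2 \pi} \rmd Q(\bfa) = \int_B Q_\theta(A) \frac{\dth}{2 \pi}.
\]
Since Lemma \ref{lemma:theta_marginal} identifies the marginal of $\theta$ under $\P$ with Lebesgue on $\bd$, this is exactly the defining relation for the regular conditional distribution of $\bfalpha$ given $\theta$ on the algebra of cylindrical rectangles. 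A standard monotone class argument then upgrades this identification from rectangles $A \times B$ to all Borel sets of $\D^\infty \times \bd$.

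The main conceptual point I expect to clarify is the interpretation of the limit. Because $\mu_{\bfa}$ is typically singular with respect to Lebesgue on $\bd$, the densities $|\varphi_n(e^{i \theta})|^{-2}$ cannot converge in $L^1(Q)$ to a nontrivial limit, so the symbol $\lim_n |\varphi_n|^{-2} \rmd Q$ cannot be read as a setwise limit of densities; it must be interpreted in the cylindrical sense above, which is exactly what the martingale property delivers for free. All remaining steps (consistency, extension, identification) are then routine.
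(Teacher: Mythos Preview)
Your proof is correct and follows essentially the same approach as the paper: use the martingale property to show consistency of the measures $|\varphi_n(e^{i\theta})|^{-2}\,\rmd Q$ on cylinder sets, extend by Kolmogorov, then identify the result with the conditional distribution via Lemma~\ref{lem:PAB} and a $\pi$-$\lambda$/monotone class argument. Your final paragraph clarifying why the limit must be read cylindrically rather than as an $L^1$ limit of densities is a nice addition that the paper leaves implicit.
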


\begin{proof}
The proof of existence is standard. Let $A \in \F_n$. Then by the martingale property \eqref{eqn:weak_martingale_relation},
\[
\E_Q \left[ \1{\bfalpha \in A} |\varphi_m(e^{i \theta})|^{-2} \right] = \E_Q \left[ \1{\bfalpha \in A} |\varphi_{n+1}(e^{i \theta})|^{-2} \right]
\]
for all $m \geq n+1$. Thus the sequence of measures $|\varphi_n(e^{i \theta})|^{-2} \: \rmd Q(\alpha)$, each one defined on $\F_n$, form a consistent family, and therefore, by the Kolmogorov Extension Theorem \cite[Chapter 1, Theorem 3.2]{RevuzYor}, they extend uniquely to a measure $Q_{\theta}$ on $\sigma(\cup_{n \geq 0} \F_n) = \mathcal{B}(\D^{\infty})$. Furthermore, by the equality above, we have
\begin{align*}
Q_{\theta}(A) = \lim_{m \to \infty} \E_Q \left[ \1{\bfalpha \in A} |\varphi_m(e^{i \theta})|^{-2} \right] = \E_{Q} \left[ \1{\bfalpha \in A} | \varphi_{n+1}(e^{i \theta})|^{-2} \right].
\end{align*}
This is sufficient to show that $Q_{\theta}$ is the weak limit of these measures \cite[Theorem 2.2]{Billingsley:convergence_of_measures}.

To show that $Q_{\theta}$ is the conditional distribution of $\bfalpha$, first observe that the fact that it can be written as a limit implies that $\theta \mapsto Q_{\theta}(A)$ is measurable for each $A \in \mathcal{B}(\D^{\infty})$. Hence it makes sense to consider the measure $\P'$ on $\Omega$ defined by
\begin{align*}
\P'(A \times B) = \int_{B} Q_{\theta}(A) \: \frac{\dth}{2 \pi}
\end{align*}
for $A \in \mathcal{B}(\D^{\infty})$, and $B \in \mathcal{B}(\bd)$. Now if $A \in \F_n$ then $Q_{\theta}(A) = \E_Q [|\varphi_{n+1}(e^{i \theta})|^{-2} \1{\bfalpha \in A}]$, by the above definition of $Q_{\theta}$, hence
\begin{align*}
\P'(A \times B) = \int_{B} \int_A |\varphi_{n+1}(e^{i \theta})|^{-2} \: \rmd Q(\bfalpha) \: \frac{\dth}{2 \pi} = \int_A \int_B |\varphi_{n+1}(e^{i \theta})|^{-2} \: \frac{\dth}{2 \pi} \: \rmd Q(\bfalpha) = \P(A \times B).
\end{align*}
The last equality is the second part of Lemma \ref{lem:PAB}. Thus $\P$ and $\P'$ agree on all sets of this form, and so by a classical $\pi$-$\lambda$ system type argument \cite{Durrett:book}, they are the same.
\end{proof}

From the definition of $Q_{\theta}$, we can quickly derive some simple properties.

\begin{proposition}\label{prop:Q_theta_properties}
The following statements hold.
\begin{enumerate}[(i)]
\item If $|\varphi_n(e^{i \theta})|^{-2} \to 0$ with positive $Q$-probability, then the measures $Q$ and $Q_{\theta}$ are singular.
\item If $|\varphi_n(e^{i \theta})|^{-2}$ converges in $L^1$, then the measures $Q$ and $Q_{\theta}$ are absolutely continuous.
\item For any $\th \in \bd$, if $\{ \alpha_n \}_{n=0}^{\infty}$ has law $Q_{\theta}$, then $\{e^{i(n+1)\theta} \alpha_n \}_{n=0}^{\infty}$ has law $Q_0$.
\item If $(\{ \alpha_n \}_{n=0}^{\infty}, \theta)$ has law $\P$, then $\{e^{i(n+1)\theta} \alpha_n \}_{n=0}^{\infty}$ has law $Q_0$.
\end{enumerate}
\end{proposition}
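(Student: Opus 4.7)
The proposition splits naturally into two pairs. Parts (i) and (ii) describe the Lebesgue decomposition of $Q_{\theta}$ with respect to $Q$, and the plan is to apply standard martingale theory to the sequence $n \mapsto |\varphi_n(e^{i\theta})|^{-2}$, which Proposition \ref{prop:phi_martingale_prop} identifies as a positive martingale of mean $1$ under $Q$ (for fixed $\theta$, where the argument of Proposition \ref{prop:phi_martingale_prop} goes through verbatim with respect to $\F_n$ instead of $\F_n^\theta$). Parts (iii) and (iv) are symmetry statements, and the plan is to apply the joint rotational invariance of $\P$ recorded in Lemma \ref{lemma:joint_rotation}.

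For (i), write $X(\theta) := \lim_n |\varphi_n(e^{i\theta})|^{-2}$, which exists $Q$-a.s. by positive martingale convergence. Standard martingale arguments (as in the proof of Proposition \ref{prop:spectral_lebesgue_decomp}) identify $X(\theta)$ as the Radon-Nikodym density of the absolutely continuous part of $Q_\theta$ with respect to $Q$. The event $\{X(\theta) = 0\}$ is a tail event for the independent sequence $(\alpha_n)$, so by Kolmogorov's 0-1 law its $Q$-probability is $0$ or $1$; positive probability therefore forces $X(\theta) = 0$ $Q$-a.s., so the a.c. part of $Q_\theta$ vanishes and $Q_\theta$ is purely singular with respect to $Q$. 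For (ii), $L^1$ convergence of $|\varphi_n(e^{i\theta})|^{-2}$ gives uniform integrability, which upgrades the identity $Q_\theta(A) = \lim_m \E_Q[\1{A}|\varphi_m(e^{i\theta})|^{-2}]$ from Proposition \ref{prop:Q_theta_exists} to $Q_\theta(A) = \E_Q[\1{A} X(\theta)]$ on every cylinder set, so $\rmd Q_\theta = X(\theta)\, \rmd Q$. Since $\E_Q[X(\theta)] = 1$ and $\{X(\theta) = 0\}$ is a tail event, the same 0-1 law forces $X(\theta) > 0$ $Q$-a.s., and thus $Q$ and $Q_\theta$ are equivalent.

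For (iii), I would apply Lemma \ref{lemma:joint_rotation} with $\lambda = e^{i\theta}$: under this rotation, $(\{\alpha_n\}, e^{i\theta})$ has the same $\P$-law as $(\{e^{i(n+1)\theta}\alpha_n\}, 1)$. Disintegrating with respect to the $\bd$-coordinate and invoking the characterization of $Q_\theta$ as the regular conditional distribution of $\bfa$ given $\theta$ (Proposition \ref{prop:Q_theta_exists}), this identity of joint laws translates to an identity of conditional laws: the push-forward of $Q_\theta$ under $\{\alpha_n\} \mapsto \{e^{i(n+1)\theta}\alpha_n\}$ equals $Q_0$. A minor technical point is that, strictly speaking, rotating the $\theta$-coordinate by an arbitrary deterministic $\theta$ requires invoking Lemma \ref{lemma:joint_rotation} in the form above and then reading off equality of disintegrations for Lebesgue-a.e.\ $\theta$; one then checks by a measurability argument that the identity in fact holds for every $\theta$. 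Finally, part (iv) is immediate from (iii) by integrating over the marginal law of $\theta$ under $\P$ (which is Lebesgue, by Lemma \ref{lemma:theta_marginal}), since conditionally on $\theta$ the random variable $\{e^{i(n+1)\theta}\alpha_n\}$ has the $\theta$-independent law $Q_0$. The most delicate step is the disintegration in (iii); beyond that the argument is routine.
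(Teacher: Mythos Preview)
Your treatment of (i), (ii), and (iv) is essentially the same as the paper's: standard Lebesgue decomposition of $Q_\theta$ via the martingale $|\varphi_n(e^{i\theta})|^{-2}$, with the $0$--$1$ law handling the ``positive probability'' hypothesis, and (iv) obtained from (iii) by integrating over the uniform $\theta$-marginal.

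For (iii), however, your route differs from the paper's and carries a genuine gap. You invoke the joint rotation invariance of $\P$ (Lemma \ref{lemma:joint_rotation}) and then disintegrate; as you yourself note, this only identifies the pushforward of $Q_\theta$ with $Q_0$ for Lebesgue-a.e.\ $\theta$, since regular conditional distributions are determined only up to null sets. Your appeal to ``a measurability argument'' to upgrade to every $\theta$ is not justified: measurability alone cannot turn an a.e.\ identity into a pointwise one. One could try to rescue this via continuity of $\theta \mapsto Q_\theta(A)$ on cylinder sets, but establishing that continuity already requires the explicit density formula from Proposition \ref{prop:Q_theta_exists}, at which point the detour through disintegration is unnecessary.

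The paper instead proves (iii) for every fixed $\theta$ by a direct computation with that density: for $\F_{n-1}$-measurable $F$,
\[
\E_{Q_\theta}[F(\bfalpha)] = \E_Q\!\big[|\varphi_n(e^{i\theta};\bfalpha)|^{-2} F(\bfalpha)\big]
= \E_Q\!\big[|\varphi_n(1;\bfalpha)|^{-2} F(\{e^{-i(k+1)\theta}\alpha_k\})\big]
= \E_{Q_0}\!\big[F(\{e^{-i(k+1)\theta}\alpha_k\})\big],
\]
using rotation invariance of $Q$ and Lemma \ref{lemma:rotated_Verblunskies} for the middle step. This gives the statement at every $\theta$ with no a.e.\ ambiguity; I would replace your disintegration argument with this.
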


\begin{proof}
For parts (i) and (ii), let $X(\theta) = \limsup_{n \to \infty} |\varphi_n(e^{i \theta})|^{-2}$. For each fixed $\th$, the fact that $(|\varphi_n(e^{i \theta})|^{-2})$ is a martingale shows that $Q$-a.s., the limsup above is merely a limit. In particular $Q(X(\theta) < \infty) = 1$. We can then define measures $Q_{\theta, \textrm{ac}}$ and $Q_{\theta, \textrm{s}}$ by
\begin{align*}
\rmd Q_{\theta, ac} = X(\theta) \: \rmd Q, \quad Q_{\theta, s}(A) = Q_{\th}(A \cap \{ X(\theta) = \infty \})
\end{align*}
and conclude as in the proof of Proposition \ref{prop:spectral_lebesgue_decomp}.

For part (iii), we have that, for any $F$ that is $\F_{n-1}$-measurable,
\begin{align*}
\E_{Q_{\theta}}[F(\bfalpha)] &= \E_Q[|\varphi_n(e^{i \theta}; \bfalpha)|^{-2} F(\bfalpha)] \\
&= \E_Q[|\varphi_n(e^{i \theta}; \{ e^{-i (k+1) \theta} \alpha_k \})|^{-2} F(\{ e^{-i(k+1) \theta} \alpha_k \})] \\
&= \E_Q[|\varphi_n(1; \bfalpha)|^{-2} F(\{ e^{-i(k+1) \theta} \alpha_k \})] = \E_{Q_0}[F(\{ e^{-i(k+1) \theta} \alpha_k \})].
\end{align*}
The first equality is the definition of $Q_{\th}$ (see the previous proof), the second holds by rotation invariance of $Q$, and the third is by Lemma \ref{lemma:rotated_Verblunskies}. Finally, part (iv) follows from (iii) and the decomposition $\rmd \P(\bfalpha, \theta) = \rmd Q_{\theta}(\bfalpha) \: \dth/2\pi$, since for Borel $A \subset \D^{\infty}$,
\[
\P \left( \{e^{i(n+1) \theta} \alpha_n \}_{n=0}^{\infty} \in A \right) = \int_{\bd} Q_{\theta} \left( \{e^{i(n+1) \theta} \alpha_n \}_{n=0}^{\infty} \in A \right) \, d \theta/2\pi = Q_0 \left( \{ \alpha_n \}_{n=0}^{\infty} \in A \right).
\]
\end{proof}

Parts (iii) and (iv) show that it is enough to study the measures $Q_0$. In particular, part (iii) combined with Corollary \ref{cor:P_dimension} gives the following simplification for computing exact dimensions:

\begin{corollary}\label{cor:Q0_dimension}
Suppose there are constants $c, d < 1$ such that, with $Q_0$ probability one,
\begin{align*}
\lim_{n \to \infty} \frac{\log |\varphi_n(1; \bfalpha)|^{-2}}{\log n} = c, \quad \lim_{n \to \infty} \frac{\log |\psi_n(1; \bfalpha)|^{-2}}{\log n} = d.
\end{align*}
Then $\mu_{\alpha}$ has exact dimension $2(1-c)/(2-c-d)$ for $Q$-a.e. realizations of the Verblunsky coefficients.
\end{corollary}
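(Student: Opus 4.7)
The plan is to chain together three results already established in the excerpt: the pushforward description of $\P$ in Proposition \ref{prop:Q_theta_properties}(iv), the deterministic local-dimension criterion of Corollary \ref{corollary:local_dim_polynomials}, and the dimension lifting of Corollary \ref{cor:P_dimension}. The hypothesis is a statement under $Q_0$ about the polynomials evaluated at $z=1$, and the target is a statement about $\mu_{\bfa}$ that naturally lives on the joint space $\Omega = \D^\infty \times \bd$, so the only real work is to transport the hypothesis to the joint measure $\P$ at the point $e^{i\theta}$.

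First, I would translate between evaluations at $1$ and at $e^{i\theta}$ using the rotation lemma. By Lemma \ref{lemma:rotated_Verblunskies}, setting $\l = e^{i\theta}$ and $z=1$ gives
\[
\Phi_n(e^{i\theta};\{\alpha_n\}) = e^{i n \theta}\,\Phi_n\bigl(1;\{e^{i(n+1)\theta}\alpha_n\}\bigr),
\]
and the same identity holds for the second-kind polynomials $\Psi_n$, since rotating $\bfa$ by $e^{i\theta}$ is the same as rotating $-\bfa$ by $e^{i\theta}$. Because the $L^2$-norms used to normalize $\Phi_n$ and $\Psi_n$ are rotation-invariant, the same identities hold for $\vphi_n$ and $\psi_n$. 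Writing $\bfa' := \{e^{i(n+1)\theta}\alpha_n\}_{n\ge 0}$, we therefore have
\[
|\vphi_n(e^{i\theta};\bfa)| = |\vphi_n(1;\bfa')|,\qquad |\psi_n(e^{i\theta};\bfa)| = |\psi_n(1;\bfa')|.
\]

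Next I would apply Proposition \ref{prop:Q_theta_properties}(iv): if $(\bfa,\theta)$ has law $\P$, then $\bfa'$ has law $Q_0$. Consequently, the $Q_0$-a.s. asymptotics assumed in the statement transfer to $\P$-a.s. asymptotics: with $\P$-probability one,
\[
\frac{\log|\vphi_n(e^{i\theta};\bfa)|^{-2}}{\log n} \longrightarrow c,\qquad \frac{\log|\psi_n(e^{i\theta};\bfa)|^{-2}}{\log n} \longrightarrow d.
\]
Now Corollary \ref{corollary:local_dim_polynomials} is purely deterministic and can be applied pointwise on the full $\P$-measure event where both limits hold; it yields $s_0(\mu_{\bfa},\theta) = 2(1-c)/(2-c-d)$ with $\P$-probability one.

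Finally I would invoke Corollary \ref{cor:P_dimension}: since the local dimension equals the constant $2(1-c)/(2-c-d)$ with $\P$-probability one, the measure $\mu_{\bfa}$ has exact Hausdorff dimension equal to this constant for $Q$-a.e. realization of the Verblunsky coefficients. There is no genuine obstacle here—the content of the corollary is exactly to repackage results already in place—but the step deserving the most care is verifying the rotation identities above, since the Verblunsky rotation $\alpha_n \mapsto e^{i(n+1)\theta}\alpha_n$ produces a degree-dependent twist of the polynomials that must be tracked carefully when moving between $Q_0$-statements at $z=1$ and $\P$-statements at $z=e^{i\theta}$.
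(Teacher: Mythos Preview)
Your proof is correct and follows essentially the same route as the paper's: both combine Lemma~\ref{lemma:rotated_Verblunskies}, Proposition~\ref{prop:Q_theta_properties}(iv), Corollary~\ref{corollary:local_dim_polynomials}, and Corollary~\ref{cor:P_dimension}. The only cosmetic difference is that the paper performs the rotation at the level of the local dimension itself---writing $s_0(\mu_{\bfa},\theta)=s_0(\mu_{\bfa}(e^{i\theta}\cdot),0)$ and then identifying the rotated measure as $\mu_{\bfa'}$---whereas you carry out the equivalent computation at the level of the polynomials $\vphi_n$ and $\psi_n$; both are valid and your remark that the normalizing $L^2$-norms are rotation-invariant (equivalently, depend only on $|\alpha_k|$) is exactly what makes your version go through.
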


\begin{proof}
By Corollary \ref{cor:P_dimension} it is sufficient to show that $s_0(\mu_{\bfalpha}, \theta) = 2(1-c)/(2-c-d)$ holds $\P$ almost surely. By a rotation of the circle, it is clear that for any sequence $\bfalpha$ of Verblunsky coefficients there is the identity $s_0(\mua, \th) = s_0(\mua(e^{i \theta} \cdot), 0)$. By Lemma \ref{lemma:rotated_Verblunskies} we have that
\[ 
\mua(e^{i \theta} \cdot) = \mu_{\{ e^{i (n+1) \theta} \alpha_n \}}.
\]
Therefore by part (iv) of Proposition \ref{prop:Q_theta_properties}, one has
\[
\P(s_0(\mu_{\bfalpha}, \theta) \leq s) = \P (s_0(\mua(e^{i \theta} \cdot), 0) \leq s) = Q_0(s_0(\mu_{\bfalpha}, 0) \leq s) 
\]
for arbitrary $s$, and the result then follows from Corollary \ref{corollary:local_dim_polynomials}.
\end{proof}

\subsection{The Markov Chain Description for the Conditional Verblunskies}

In this section, we analyze the behavior of the Verblunsky coefficients under the measure $Q_0$. The description turns out to be simplest in terms of the modified Verblunsky coefficients $\gamma_n$, which maintain their independence properties but lose their rotation invariance. The radial part of each Verblunsky maintains the same law but its angular part is biased towards $z = 1$. Translating these results back into the original Verblunsky coefficients shows that the quantity $(\alpha_n, B_n(1))$ forms a Markov chain with a straightforward transition probability.

\begin{theorem}\label{theorem:markov_chain_description}
 Under $Q_0$ the modified Verblunsky coefficients $\{ \gamma_n : n \geq 0 \}$ are mutually independent with marginal laws
\begin{align}
 Q_0(\gamma_n \in \rmd z) = P_{\D}(z, 1) Q(\gamma_n \in \rmd z).
\end{align}
Consequently, under $Q_0$ the process $n \mapsto (\alpha_n, B_n(1))$ forms a Markov chain with transition kernel
\begin{align}
Q_0(\alpha_{n+1} \in \rmd z | \alpha_n, B_n(1)) = P_{\D}(\alpha_n B_n(1),1) Q(\alpha_{n+1} \in \rmd z), 
\end{align}
while $B_n(1)$ updates by formula \eqref{eqn:V_recursion}.
\end{theorem}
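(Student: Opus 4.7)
The plan is to invoke Proposition \ref{prop:Q_theta_exists} specialized at $\theta = 0$: for any bounded, $\F_{n-1}$-measurable random variable $F$,
$$\E_{Q_0}[F] = \E_Q\left[F \cdot |\vphi_n(1)|^{-2}\right].$$
The key simplification is that the product formula \eqref{eqn:phi_psi_prod_formula} collapses at $\theta = 0$, because $\delta_k(0) \equiv 0$ is a fixed point of the recursion \eqref{eqn:relative_Prufer_phase} (at $\theta = 0$ both the $\theta$ summand and the $\Im \log$ summand vanish, and $\delta_0(0) = 0$). This gives the clean Radon--Nikodym derivative
$$\frac{\rmd Q_0}{\rmd Q}\bigg|_{\F_{n-1}} = \prod_{k=0}^{n-1} P_\D(\gamma_k, 1).$$

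The independence assertion then falls out essentially for free. By Lemma \ref{lemma:KS_modified_Verblunsky_law}, under $Q$ the modified Verblunskies $(\gamma_k)$ are mutually independent and rotationally invariant, with $\gamma_k \eqlaw \alpha_k$ for each $k$. Since the map $(\alpha_0,\ldots,\alpha_{n-1}) \mapsto (\gamma_0,\ldots,\gamma_{n-1})$ is bijective (invert inductively via $\alpha_k = \gamma_k / B_k(1)$, with $B_k(1)$ determined by $\alpha_0,\ldots,\alpha_{k-1}$), the two sequences generate the same $\sigma$-algebra. For test functions of product form $F = \prod_{k=0}^{n-1} f_k(\gamma_k)$, the identity above factors as
$$\E_{Q_0}\left[\prod_{k=0}^{n-1} f_k(\gamma_k)\right] = \prod_{k=0}^{n-1} \E_Q\left[f_k(\gamma_k) P_\D(\gamma_k, 1)\right],$$
by independence of the $\gamma_k$ under $Q$. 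This is precisely the independence of the $\gamma_k$ under $Q_0$ together with the claimed marginal law $P_\D(z,1)\, Q(\gamma_k \in \rmd z)$. The tilted marginals are genuine probability measures because $\E_Q[P_\D(\gamma_k,1)] = 1$, which in turn follows from harmonicity of $z \mapsto P_\D(z,1)$ at the origin combined with rotational symmetry of $\gamma_k$.

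The Markov chain description of $(\alpha_n, B_n(1))$ is then immediate. The recursion \eqref{eqn:V_recursion} shows that $B_{n+1}(1)$ is a deterministic function of $(\alpha_n, B_n(1))$. Writing $\alpha_{n+1} = \gamma_{n+1}/B_{n+1}(1)$ and using that $\gamma_{n+1}$ is independent of $(\gamma_0,\ldots,\gamma_n)$ under $Q_0$ (equivalently, of $(\alpha_0,\ldots,\alpha_n)$), the conditional law of $\alpha_{n+1}$ given the past depends on the past only through $B_{n+1}(1)$, hence only through $(\alpha_n, B_n(1))$. This gives the Markov property, and the explicit transition kernel comes from a change of variable together with rotational invariance of $Q(\alpha_{n+1} \in \cdot)$. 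I do not anticipate any genuine obstacle here; the only care required is verifying the fixed-point identity $\delta_k(0) \equiv 0$ and tracking the bijection between $(\alpha_k)$ and $(\gamma_k)$.
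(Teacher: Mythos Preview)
Your proposal is correct and follows essentially the same line as the paper's proof: both hinge on the product formula \eqref{eqn:phi_psi_prod_formula} at $\theta=0$ (using $\delta_k(0)\equiv 0$) to see that the Radon--Nikodym derivative of $Q_0$ with respect to $Q$ on $\F_{n-1}$ factors as $\prod_{k=0}^{n-1} P_\D(\gamma_k,1)$, which immediately gives independence and the stated marginals under $Q_0$, and then recover the Markov chain description for $(\alpha_n,B_n(1))$ via the bijection between $(\alpha_k)$ and $(\gamma_k)$. Your write-up simply fills in a few details (the fixed-point check $\delta_k(0)=0$, the harmonicity argument that the tilted marginals are probabilities) that the paper leaves implicit.
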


\begin{proof}
 This is a straightforward consequence of the independence and rotation invariance of the modified Verblunsky coefficients under $Q$ and formula \eqref{eqn:phi_psi_prod_formula} at $\theta = 0$, which shows that
\[
|\vphi_n(1)|^{-2} = \prod_{k=0}^{n-1} P_{\D}(\g_k,1), 
\]
so that the Radon-Nikodym derivative between $Q_0$ and $Q$ maintains the product structure of $Q$. The Markov chain description for the original Verblunsky coefficients follows from \eqref{eqn:phi_psi_mod_recursion} expressed in terms of $\alpha_n$ and $B_n(1)$, or alternatively by taking the result for $\gamma_n$ and applying the bijection from modified Verblunkies back to the original ones.
\end{proof}

\subsection{Coupling $Q$ and $Q_0$}

In this section we derive an algorithm that converts Verblunskys distributed according to $Q$ into Verblunskys distributed according to $Q_0$. We use it to derive several distributional properties of the Verblunskys under $Q_0$.

\begin{proposition}\label{prop:coupling_formula}
Given a sequence $\{\gamma_n \}_{n=0}^{\infty} \in \D^{\infty}$ define a new sequence $\{ \gamma_n^* \}_{n=0}^{\infty}$ by
\begin{align}\label{eqn:coupling_formula}
\gamma_n^* := \gamma_n \frac{1 + \overline{\gamma_n}}{1 + \gamma_n}.
\end{align}
If $\{ \gamma_n \}_{n=0}^{\infty}$ has law $Q$, then $\{ \gamma_n^* \}_{n=0}^{\infty}$ has law $Q_0$.  Consequently, under $Q_0$ the modified Verblunsky coefficients are invariant under conjugation, have radial laws that are the same as they are under $Q$, and have integer moments given by
\begin{align}
 \E_{Q_0}[\gamma_n^m] = \E_Q[(\gamma_n^*)^m] = \E_Q[|\gamma_n|^{2m}].
\end{align}
\end{proposition}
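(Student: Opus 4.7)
The plan is to verify coordinatewise that $T(z) := z(1+\bar z)/(1+z)$ pushes the rotationally invariant marginal of $Q$ forward to the marginal of $Q_0$ described in Theorem \ref{theorem:markov_chain_description}; independence across coordinates of $(\gamma_n^*)$ is then automatic since $T$ is applied separately to each $\gamma_n$. First I note that $\overline{(1+z)} = 1 + \bar z$, so the factor $(1+\bar z)/(1+z)$ has modulus one and $T$ preserves the modulus of $z$. In polar coordinates $z = re^{i\phi}$, a short algebraic simplification yields
\[
T(re^{i\phi}) = re^{i\phi^*}, \qquad e^{i\phi^*} = \frac{e^{i\phi}+r}{1 + re^{i\phi}}.
\]
For each fixed $r \in [0,1)$, the angular map $\phi \mapsto \phi^*$ is therefore a Möbius automorphism of $\bd$ fixing $\pm 1$.

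The crux is computing the derivative of this angular map. Differentiating the expression for $e^{i\phi^*}$ and simplifying via the identity $(e^{i\phi}+r)(1+re^{i\phi}) = e^{i\phi}|1+re^{i\phi}|^2$ gives $d\phi^*/d\phi = (1-r^2)/|1+re^{i\phi}|^2$. Inverting the Möbius relation to solve for $e^{i\phi}$ in terms of $e^{i\phi^*}$ produces $1 + re^{i\phi} = (1-r^2)/(1 - re^{i\phi^*})$, and therefore
\[
\frac{d\phi}{d\phi^*} = \frac{1-r^2}{|1 - re^{i\phi^*}|^2} = P_{\D}(re^{i\phi^*}, 1).
\]
Under Assumption \ref{as:indrot}, $\gamma_n = R e^{i\Phi}$ with $R = |\gamma_n|$ independent of $\Phi$ and $\Phi$ uniform on $[0, 2\pi)$. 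Conditioning on $R = r$ and changing variables $\phi \mapsto \phi^*$ in $\int h(re^{i\phi^*})\, d\phi/(2\pi)$ shows that conditional on $R$, the angular part of $\gamma_n^*$ has density $P_{\D}(Re^{i\theta},1)/(2\pi)$ on $\bd$. Since $|\gamma_n^*| = R$, the marginal of $\gamma_n^*$ under $Q$ has density $P_{\D}(z,1)$ with respect to the marginal of $\gamma_n$, matching exactly the $n$th marginal of $Q_0$ from Theorem \ref{theorem:markov_chain_description}.

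The three stated consequences then follow immediately. Conjugation invariance of the $Q_0$ marginals holds because $P_{\D}(\bar z,1) = P_{\D}(z,1)$ (both equal $(1-|z|^2)/|1-z|^2$) and the radial density is real. The coincidence of radial distributions is the observation $|T(z)| = |z|$. For the integer moment identity with $m \geq 0$, the Poisson integral $\int_0^{2\pi} e^{im\phi} P_{\D}(re^{i\phi},1)\, d\phi/(2\pi) = r^{|m|}$ yields
\[
\E_{Q_0}[\gamma_n^m] = \E_Q[\gamma_n^m\, P_{\D}(\gamma_n, 1)] = \E[R^m \cdot R^{|m|}] = \E[R^{2m}] = \E_Q[|\gamma_n|^{2m}],
\]
while the middle equality $\E_Q[(\gamma_n^*)^m] = \E_{Q_0}[\gamma_n^m]$ is simply the coupling. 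I do not expect a serious obstacle: the whole argument collapses to the one-line derivative identity above, whose conceptual content is that $T$ implements the precise rotation $\phi \mapsto \phi - 2\arg(1 + re^{i\phi})$ needed to convert a uniform angular law into the Poisson measure based at $1$.
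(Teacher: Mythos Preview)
Your proof is correct and follows essentially the same route as the paper. Both arguments recognise that $T$ preserves the modulus and acts on the angular part as the M\"obius map $e^{i\phi}\mapsto \phi_{-r}(e^{i\phi})$, whose Jacobian on $\bd$ is the Poisson kernel; the paper packages this as a separate lemma using the identity $|\phi_{z_0}'(e^{i\theta})| = P_{\D}(z_0,e^{i\theta})$ together with the symmetry $P_{\D}(rw,e^{i\theta}) = P_{\D}(re^{i\theta},w)$, whereas you compute the derivative $d\phi/d\phi^*$ by hand, but the content is identical. The one minor presentational difference is in the moment formula: the paper stays on the coupling side and evaluates $\E_Q[(\gamma_n^*)^m]$ via the mean value property of the analytic function $z\mapsto \phi_{-|\gamma_n|}(z)^m$, while you pass to the $Q_0$ side via the Radon--Nikodym density and invoke the Poisson integral of $e^{im\theta}$ directly---two equivalent expressions of the same harmonic-function fact.
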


In the above we of course think of the $\gamma_n$ as the modified Verblunksy coefficients. It is straightforward to modify this result to obtain the analogous result for the original Verblunsky coefficients, if so desired. The proof relies on the following simple lemma.

\begin{lemma}\label{lemma:rotation_transform}
Let $Z$ be a random variable taking values in $\D$ whose law $d \Lambda(z)$ is rotationally invariant. Then, for fixed $w \in \bd$, the random variable
\begin{align*}
|Z| \phi_{-|Z|w} \left( \frac{Z}{|Z|} \right) = Z \frac{1 + \bar{Z}w}{1 + Z \bar{w}}
\end{align*}
has law $P_{\D}(z,w) \, d \Lambda(z)$ on $\D$.
\end{lemma}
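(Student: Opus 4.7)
The plan is to reduce the problem to a pushforward computation on $\bd$ via polar decomposition, exploiting the fact that conjugation by a Möbius automorphism of $\D$ sends uniform measure to Poisson/harmonic measure.

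First I would verify the modulus-preservation identity: since $|w|=1$, $\overline{1+\bar{Z}w} = 1 + Z\bar{w}$, so $|1+\bar{Z}w| = |1+Z\bar{w}|$ and hence the random variable $W := Z(1+\bar{Z}w)/(1+Z\bar{w})$ satisfies $|W|=|Z|$. This handles the radial marginal: by rotation invariance, decompose $\rmd\Lambda(z) = \rmd\Lambda_R(\rho)\,\rmd\theta/(2\pi)$ with $z = \rho e^{i\theta}$; then the radial marginal of $P_{\D}(z,w)\,\rmd\Lambda(z)$ is $\rmd\Lambda_R(\rho)\cdot \int_0^{2\pi} P_{\D}(\rho e^{i\theta}, w)\,\rmd\theta/(2\pi) = \rmd\Lambda_R(\rho)$, since $P_{\D}(\cdot,w)$ is harmonic and the mean value property (equivalently a change of variables $\theta \mapsto \theta - \arg w$) gives that the average over a circle of radius $\rho$ is $P_{\D}(0,w)=1$.

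Next I would condition on $|Z|=r$. By rotation invariance of $\Lambda$, the angle $Z/|Z|$ is uniform on $\bd$, and by the algebraic identity already computed just above the lemma we have $W = r\,\phi_{-rw}(Z/|Z|)$. Since $\phi_{-rw}$ is the inverse of $\phi_{rw}$ and $\phi_{rw}(rw)=0$, the map $\phi_{-rw}$ is a conformal automorphism of $\D$ sending $0$ to $rw$, and restricted to $\bd$ it pushes uniform measure (i.e.\ harmonic measure at $0$) to harmonic measure at $rw$. That pushforward is exactly $P_{\D}(rw, e^{i\psi})\,\rmd\psi/(2\pi)$. Conclusion: conditional on $|Z|=r$, the angular part $W/r$ has density $P_{\D}(rw,\cdot)$ with respect to uniform measure on $\bd$.

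Finally I would match this with the conditional angular distribution under the target measure. Under $P_{\D}(z,w)\,\rmd\Lambda(z)$, the conditional law of the angle given $|z|=r$ is (by the decomposition above and the first step) $P_{\D}(re^{i\psi}, w)\,\rmd\psi/(2\pi)$. So the proof reduces to the symmetry
\[
P_{\D}(rw, e^{i\psi}) \;=\; P_{\D}(re^{i\psi}, w),
\]
which is immediate from expanding $|e^{i\psi}-rw|^2 = 1 - 2r\,\Re(we^{-i\psi}) + r^2 = |w - re^{i\psi}|^2$ using $|w|=1$. Combining the matching radial marginals and matching conditional angular laws yields the stated distribution for $W$.

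The argument has no real obstacle beyond bookkeeping: the one step a reader might want spelled out is the classical identification \emph{uniform measure on $\bd$ pushed forward by $\phi_{z_0}^{-1}$ equals harmonic measure at $z_0$}, which is just conformal invariance of harmonic measure together with $\phi_{z_0}^{-1} = \phi_{-z_0}$ sending $0$ to $z_0$. Everything else is polar coordinates and the explicit formula for $P_{\D}$.
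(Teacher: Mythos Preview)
Your argument is correct and is essentially the same as the paper's: both condition on $|Z|=r$, use that $\phi_{-rw}$ pushes uniform measure on $\bd$ to the Poisson density $P_{\D}(rw,\cdot)$ (the paper phrases this via $|\phi_{z_0}'(e^{i\theta})| = P_{\D}(z_0,e^{i\theta})$), and then invoke the symmetry $P_{\D}(rw,e^{i\psi}) = P_{\D}(re^{i\psi},w)$. Your proof is slightly more explicit about matching the radial marginals via the mean-value property, but the mathematical content is identical.
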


\begin{proof}
Represent $Z$ by its radial and angular parts via the pair $(|Z|, Z/|Z|)$. The rotation invariance of $Z$ implies that the joint measure of this pair can be written as $\rmd \eta(r) \; \dth/2\pi$, where $\eta$ is the distribution of $|Z|$. Then recall that for $z_0 \in \D$, the conformal map $\phi_{z_0}$ sends the disk to itself and $z_0$ to $0$, and on the circle has derivative
\begin{align*}
|\phi_{z_0}'(e^{i \theta})| = P_{\D}(z_0, e^{i \theta}).
\end{align*}
Therefore if $X$ is a uniform random variable on $\bd$, then $\phi_{-z_0}(X)$ has law
\begin{align*}
|\phi_{z_0}'(e^{i \theta}) | \, \frac{d \theta}{2 \pi}.
\end{align*}
Conditioning on $|Z|$ and then using the identity
\begin{align*}
|\phi_{rw}'(e^{i \theta})| = P_{\D}(rw, e^{i \theta}) = P_{\D}(r \overline{w}, e^{-i \theta}) = P_{\D}(r e^{i \theta}, w) = |\phi_{r e^{i \theta}}'(w)|
\end{align*}
we get that $\phi_{-|Z|w}(Z/|Z|)$ has law
\begin{align*}
|\phi_{|Z| w}'(e^{i \th})| \, \frac{d \theta}{2 \pi} = |\phi_{|Z|e^{i \theta}}'(w)| \, \frac{d \theta}{2 \pi}.
\end{align*}
Multiplying by the marginal density of the $|Z|$ variable completes the proof.
\end{proof}

\begin{proof}[Proof of Proposition \ref{prop:coupling_formula}]
Lemma \ref{lemma:rotation_transform} and Theorem \ref{theorem:markov_chain_description} combine to show that the law of $\gamma_n^*$ under $Q$ is exactly $Q_0$. Conjugation invariance of $\gamma_n$ under $Q$ (which follows from rotation invariance) therefore implies conjugation invariance of $\gamma_n^*$, by \eqref{eqn:coupling_formula}. That the radial laws are the same under $Q$ and $Q_0$ follows from $|\gamma_n| = |\gamma_n^*|$. Finally, for the moment formula, use that
\[
 \E_{Q_0} \left[ \gamma_n^m \right] = \E_{Q}[(\gamma_n^*)^m] = \E_Q \left[ \left( \gamma_n \frac{1 + \overline{\gamma_n}}{1 + \gamma_n} \right)^m \right] = \E_Q \left[ |\gamma_n|^m \phi_{-|\gamma_n|} \left( \frac{\gamma_n} {|\gamma_n|} \right)^m \right].
\]
Since the function $z \mapsto \phi_{-|\gamma_n|}(z)^m$ is analytic on $\overline{\D}$ and $\gamma_n/|\gamma_n|$ is uniformly distributed on $\bd$ and independent of $|\gamma_n|$ it follows that
\begin{align*}
\E_Q \left[ \phi_{-|\gamma_n|} \left( \left. \frac{\gamma_n}{|\gamma_n|} \right)^m \right| |\gamma_n| \right] = \phi_{-|\gamma_n|}(0)^m = |\gamma_n|^m.
\end{align*}
The tower property of conditional expectation finishes the computation.
\end{proof}

%

\section{Almost Sure Local Dimensions \label{sec:local_dim}}

\subsection{Main result}

In this section we compute exact local dimensions for $\mu_{\bfalpha}$ at typical points of the measure. The techniques we use are very general and apply to \emph{any} choice of independent and rotationally invariant Verblunsky coefficients satisfying \eqref{eq:2ndmoment} and \eqref{eq:3rdmoment}, to wit
\begin{equation} \label{eq:2ndmoment2}
\E_{Q}[|\alpha_n|^2] \sim \frac{2}{\beta n}
\end{equation}
and
\begin{equation} \label{eq:3rdmoment2}
\E_{Q}[|\alpha_n|^3] = \ofrac1n.
\end{equation}
We shall prove Theorem \ref{th:exactdim}, namely that $Q$-a.s., $\mua$ has exact Hausdorff dimension $1-2/\b$. By Corollaries \ref{corollary:local_dim_polynomials} and \ref{cor:Q0_dimension} it is enough to prove the following.
 
\begin{proposition} \label{prop:Q0_circ_beta_results}
With $Q_0$-probability one
\begin{align*}
\lim_{n \to \infty} \frac{\log |\varphi_n(1)|^{-2}}{\log n} = \frac{2}{\beta}, \quad \lim_{n \to \infty} \frac{\log |\psi_n(1)|^{-2}}{\log n} = -\frac{2}{\beta}.
\end{align*}
\end{proposition}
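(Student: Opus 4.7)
The plan is to apply the product formula \eqref{eqn:phi_psi_prod_formula}, namely $|\vphi_n(1)|^{-2} = \prod_{k=0}^{n-1} P_{\D}(\gamma_k, 1)$, together with the analogous identity $|\psi_n(1)|^{-2} = \prod_{k=0}^{n-1} P_{\D}(\tilde\gamma_k, 1)$ for the sign-flipped sequence $\{-\alpha_k\}$ (where $\tilde\gamma_k := -\alpha_k \tilde B_k(1)$), and analyze the resulting logarithms as sums of (conditionally) independent increments under $Q_0$, using the independence of the $\gamma_k$ from Theorem \ref{theorem:markov_chain_description} and the moment identities of Proposition \ref{prop:coupling_formula}.

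For the $\vphi$-limit, I would Taylor expand
\[
\log P_{\D}(z, 1) = 2\Re z - |z|^2 + \Re(z^2) + O(|z|^3)
\]
and compute the termwise $Q_0$-expectation using $\E_{Q_0}[\gamma_k^m] = \E_Q[|\gamma_k|^{2m}]$ together with the preservation of the radial distribution; combined with \eqref{eq:2ndmoment2}, \eqref{eq:3rdmoment2} this gives
\[
\E_{Q_0}[\log P_{\D}(\gamma_k, 1)] = \E_Q[|\gamma_k|^2] + O\bigl(\E_Q[|\gamma_k|^3]\bigr) \sim \frac{2}{\beta k},
\]
and a parallel computation yields $\Var_{Q_0}[\log P_{\D}(\gamma_k, 1)] = O(1/k)$. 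Summing the means gives $(2/\beta)\log n (1+o(1))$, while Kolmogorov's strong law with normalizing sequence $\log n$ (using $\sum_k 1/(k \log^2 k) < \infty$) controls the fluctuations, after a truncation step to handle the rare events where $|\gamma_k|$ clusters near $\partial\D$; the third-moment hypothesis \eqref{eq:3rdmoment2} is well adapted to this truncation.

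For the $\psi$-limit, write $\tilde\gamma_k = c_k \gamma_k$ with $c_k := -\tilde B_k(1)/B_k(1) \in \partial\D$ being $\F_{k-1}$-measurable. Conditional on $\F_{k-1}$, $\gamma_k$ has $Q_0$-density $P_{\D}(z,1) f_k(z)$ (with $f_k$ the rotation-invariant $Q$-density), so $\tilde\gamma_k$ has conditional density $P_{\D}(w, c_k) f_k(w)$. Expanding $\log P_{\D}(w,1)$ and $P_{\D}(w, c_k)$ in their Poisson series and integrating against $f_k$, only matched-degree monomials survive, yielding
\[
\E_{Q_0}\bigl[\log P_{\D}(\tilde\gamma_k, 1) \,\big|\, \F_{k-1}\bigr] = (2\Re c_k - 1) \cdot \frac{2}{\beta k}(1+o(1)).
\]
Since the centered increments $\log P_{\D}(\tilde\gamma_k, 1) - \E_{Q_0}[\,\cdot \mid \F_{k-1}]$ have $O(1/k)$ conditional variance, the martingale SLLN makes their cumulative sum $o(\log n)$ almost surely.

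The hard part will then be to show
\[
\frac{1}{\log n}\sum_{k=0}^{n-1} \frac{\Re c_k}{k} \longrightarrow 0 \qquad Q_0\text{-a.s.},
\]
which would reduce the compensator to $-(2/\beta)\log n (1+o(1))$. This is a weak ergodicity statement for the $\partial\D$-valued process $k \mapsto c_k$ driven by the independent $\gamma_k$'s under $Q_0$: unfolding the recursion \eqref{eqn:V_recursion} for $B_k(1)$ and its sign-flipped analog expresses $\arg c_k$ as a partial sum of bounded $\F_{k-1}$-measurable increments, and a direct second-moment bound on $\sum_k \Re(c_k)/k$ under $Q_0$ (or a coupling to a rotation-invariant simplified chain that averages $\Re c_k$ to $0$) should suffice. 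Once both limits are established, Corollary \ref{cor:Q0_dimension} with $c = 2/\beta$ and $d = -2/\beta$ yields the exact Hausdorff dimension $1 - 2/\beta$, completing Theorem \ref{th:exactdim}.
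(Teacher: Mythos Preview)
Your treatment of the $\varphi$-limit is correct and essentially identical to the paper's: Taylor expand $\log P_{\D}(\gamma_k,1)$, use the moment identities from Proposition~\ref{prop:coupling_formula} to compute $\E_{Q_0}[\Re\gamma_k]=\E_Q[|\gamma_k|^2]$ and $\E_{Q_0}[(\Im\gamma_k)^2]$, then apply Lemma~\ref{lem:martvar} to the centered sums. The truncation to enforce \eqref{eq:bddfrom1} is also how the paper handles the boundedness issue (Section~\ref{sec:bounded_Ver}).

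The $\psi$-limit is where there is a real gap. Your reduction to the averaging statement
\[
\frac{1}{\log n}\sum_{k=0}^{n-1}\frac{\Re c_k}{k}\longrightarrow 0 \qquad Q_0\text{-a.s.}
\]
is correct, and your conditional-expectation formula $(2\Re c_k-1)\cdot\tfrac{2}{\beta k}(1+o(1))$ checks out. But neither of your proposed methods will close the argument. A ``direct second-moment bound'' cannot work because $c_k$ is $\F_{k-1}$-measurable: the sum $\sum_k\Re c_k/k$ is a predictable process, not a martingale, and the trivial bound $|\Re c_k|\le 1$ only gives $\E\bigl[(\sum_k\Re c_k/k)^2\bigr]\le (\log n)^2$. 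Cancellation would require knowing the correlation structure of the $c_k$, which is exactly what is unclear. The ``rotation-invariant coupling'' is also not available: under $Q_0$ the $\gamma_k$ are biased towards $1$ (they are conjugation-invariant but not rotation-invariant), and $c_0=-\tilde B_0(1)/B_0(1)=-1$ deterministically, so there is no circle symmetry to average over. In fact a naive drift analysis of the recursion $\arg c_{k+1}-\arg c_k\approx 2\Im((c_k-1)\gamma_k)$ shows that $c_k=-1$ is a \emph{stable} fixed point under $Q_0$, so one cannot simply argue that $c_k$ equidistributes.

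The paper avoids this phase process entirely by a different device. It conjugates the transfer-matrix recursion by the Cayley transform $U=\tfrac{1}{\sqrt 2}\begin{pmatrix}1&-i\\1&i\end{pmatrix}$, which turns the M\"obius action on $\D$ into an affine action on the upper half-plane, producing upper-triangular matrices $C_k$ (equation~\eqref{eq:Cn}). This yields the exact identity
\[
\left|\frac{\psi_n(1)}{\varphi_n(1)}\right|^2 = X_{n-1}^2+Y_{n-1}^2,
\]
where $Y_{n-1}=|\varphi_n(1)|^{-2}$ (already controlled) and $X_n=\sum_{k\le n}2Y_{k-1}\Im\tfrac{\gamma_k}{1-\gamma_k}$. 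The key miracle is that the coupling of Proposition~\ref{prop:coupling_formula} gives
\[
\E_{Q_0}\!\left[\Im\frac{\gamma_k}{1-\gamma_k}\right]=\E_Q\!\left[\frac{\Im\gamma_k}{1-|\gamma_k|^2}\right]=0,
\]
so $X_n$ is a genuine $Q_0$-martingale and can be controlled by Lemma~\ref{lem:martvar} after a truncation of $Y_{k-1}$. The Cayley trick is the idea your proposal is missing; once you have it, no averaging statement for $c_k$ is needed.
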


Note that the assumption of $\beta > 2$ is only used in this section to satisfy the assumption of Corollary \ref{corollary:local_dim_polynomials} that $c < 1$. All subsequent results hold for all $\beta > 0$.

%

\subsection{Simplification to Bounded Verblunsky Coefficients \label{sec:bounded_Ver}}

In all the proofs, we will in fact assume that the $\a_n$ are uniformly bounded away from 1, i.e., for some $\dl > 0$,
\begin{equation} \label{eq:bddfrom1}
Q \left ( \forall n \geq 0 \; |\a_n| \leq 1 - \dl \right ) = 1.
\end{equation}
Here is why this is enough to obtain the results for general variables $\a_n$ satisfying \eqref{eq:2ndmoment2} and \eqref{eq:3rdmoment2}. First note that \eqref{eq:2ndmoment2} implies that the sequence $|\a_n|$ converges to zero in probability. Since it also take values in $\D$, for any $\eps > 0$ we may find $\dl > 0$ such that
\[
Q \left ( \forall \, n \geq 0 \; |\a_n| \leq 1 - \dl \right ) \geq 1 - \eps.
\]
Then, truncate the variables at radius $1-\dl$ by letting
\[
\wa_n = (|\a_n| \wedge 1 - \dl) \frac{\a_n}{|\a_n|}.
\] 
Our choice of $\dl$ means that
\[
Q \left ( \forall \, n \geq 0 \; \a_n = \wa_n \right ) \geq 1 - \eps.
\]
The variables $(\wa_n)$ are still rotationally invariant and independent and satisfy \eqref{eq:bddfrom1} and \eqref{eq:3rdmoment2}. Moreover, by H\"{o}lder's and Markov's inequalities,
\begin{align*}
\E_Q \left [ |\a_n|^2 \1{|\a_n| > 1 - \dl} \right ] & \leq \E \left [ |\a_n|^3 \right ]^{2/3} \P \left ( |\a_n| > 1 - \dl \right )^{1/3} \\
& \leq \E \left [ |\a_n|^3 \right ]^{2/3}  \E[|\a_n|^2]^{1/3} (1-\delta)^{-2/3} \\
& = o(n^{-2/3}) n^{-1/3} = o(n^{-1}),
\end{align*}
so that
\[
\E_Q \left [ |\wa_n|^2 \right ] \geq \E_Q \left [ |\a_n|^2 \one{|\a_n| \leq 1 - \dl} \right ] = \E_Q \left [ |\a_n|^2 \right ] - \E_Q \left [ |\a_n|^2 \one{|\a_n| > 1 - \dl} \right ] = \frac{2}{\b n} + o(n^{-1}).
\]
Combining this with the obvious inequality $\E_Q[|\wa_n|^2] \leq \E_Q[|\a_n|^2]$ we have
\[
\E_Q \left ( |\wa_n|^2 \right ) \sim \frac{2}{\b n}.
\]
So the second moments of the $\wa_n$ and of $\a_n$ have the same asymptotic rate of decay, with the same constant. So once we prove, for instance, that $\mu_{\widetilde{\bfa}}$ has a.s. exact Hausdorff dimension $1-2/\b$, then $\mu_{\bfa}$ has exact Hausdorff dimension $1-2/\b$ with probability at least $1-\eps$. Since this holds for all $\eps > 0$, and the result does not depend on $\eps$, then this also proves it for $\mu_{\bfa}$. 

The main purpose of assuming \eqref{eq:bddfrom1} is that it allows us to make Taylor expansions of functions of $\a_n$ that are uniform in $\bfa$. For instance, we have the Taylor series expansion
\begin{align}\label{eqn:log_Poisson_expansion}
\log P_{\D}(z,1) = 2 \Re z - 2 (\Im z)^2 + O(|z|^3),
\end{align}
when $z \to 0$. Applied to formula \eqref{eqn:phi_psi_prod_formula} for $|\vphi_n(1)|^{-2}$, this gives
\begin{equation}\label{eqn:log_circ_beta_formula}
\log |\varphi_n(1)|^{-2} = \sum_{k=0}^{n-1} \left ( 2 \Re \gamma_k - 2 (\Im \gamma_k)^2 + O(|\gamma_k|^3) \right ).
\end{equation}
The moment assumptions imply that the first two terms grow like a multiple of $\log n$ (see below), but the last one is more tricky to deal with. For instance, without knowing more, we cannot take an expectation of this formula. More precisely, we cannot put the expectation inside the $O(\cdot)$. But if the $\a_n$ do satisfy \eqref{eq:bddfrom1}, then $O(\cdot)$ simply means that there is a constant $C$, that depends only on $\dl$, such that
\[
O(|\a_k|^3) \leq C |\a_k|^3.
\]
Therefore
\[
\E \left [ \sum_{k=0}^{n-1} O(|\a_k|^3) \right ] \leq C \sum_{k=0}^{n-1} \E(|\a_k|^3) = o(\log n),
\]
by \eqref{eq:3rdmoment2}. As a conclusion, these terms will not matter.


\subsection{A lemma on martingales}

For the precise asymptotics of the first two terms in the Taylor expansion above it will be enough to analyze their means, as a result of the following result, which is taken from  \cite[Chapter 12]{Williams}.

\begin{lemma} \label{lem:martvar}
Let $M_n$ be a martingale with $M_0 = 0$ and with increments $X_n = M_n - M_{n-1}$ that satisfy $\E[X_n^2] < \infty$ for all $n$. Assume that $(b_n)$ is a sequence of positive real numbers s.t.
\[
\sum_{n=1}^{\infty} \frac{\E(X_n^2)}{b_n^2} < \infty.
\]
Then $M_n/b_n \to 0$ almost surely as $n \to \infty$.
\end{lemma}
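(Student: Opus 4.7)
The plan is to reduce this to two classical results: the $L^2$-bounded martingale convergence theorem and Kronecker's lemma. Implicit in the statement (and the usual formulation of this lemma in Williams) is that the sequence $(b_n)$ is non-decreasing with $b_n \uparrow \infty$; otherwise the conclusion cannot be expected. I will assume this throughout.

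First, I would introduce the auxiliary process
\[
N_n := \sum_{k=1}^{n} \frac{X_k}{b_k}.
\]
Since $(b_k)$ is deterministic (so in particular $\F_{k-1}$-measurable) and $(X_k)$ are martingale differences with $\E[X_k \mid \F_{k-1}] = 0$, the increments $X_k/b_k$ are again martingale differences and thus $N_n$ is a martingale started at $0$. The orthogonality of martingale increments gives
\[
\E[N_n^2] = \sum_{k=1}^{n} \frac{\E[X_k^2]}{b_k^2} \leq \sum_{k=1}^{\infty} \frac{\E[X_k^2]}{b_k^2} < \infty,
\]
so $N_n$ is an $L^2$-bounded martingale.

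By the martingale convergence theorem for $L^2$-bounded martingales, $N_n$ converges almost surely (and in $L^2$) to a finite limit $N_\infty$. In particular the series $\sum_{k \geq 1} X_k/b_k$ converges almost surely.

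The final step is Kronecker's lemma: if $\sum a_k$ converges in $\R$ and $b_k \uparrow \infty$, then $\frac{1}{b_n}\sum_{k=1}^n b_k a_k \to 0$. Applying this almost surely on the event of convergence of $\sum X_k/b_k$, with $a_k = X_k/b_k$, yields
\[
\frac{M_n}{b_n} = \frac{1}{b_n} \sum_{k=1}^{n} X_k = \frac{1}{b_n}\sum_{k=1}^{n} b_k \cdot \frac{X_k}{b_k} \xrightarrow[n \to \infty]{} 0 \quad \text{a.s.},
\]
which is the desired conclusion. There is really no obstacle here: the only mild subtlety is remembering that Kronecker's lemma requires $b_n$ to be non-decreasing and unbounded, which is why this hypothesis is needed (explicitly or tacitly) in the statement.
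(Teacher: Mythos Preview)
Your proof is correct and is precisely the standard argument from \cite[Chapter 12]{Williams}, which is all the paper does here: it states the lemma and cites Williams rather than giving its own proof. Your observation that the hypothesis $b_n \uparrow \infty$ is needed (and tacitly assumed) is accurate; in the paper the lemma is only ever applied with $b_n = \log n$ or $b_n = n^{2/\beta + \eta}$, so this is harmless.
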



\subsection{Asymptotics Under $Q$ \label{sec:Q_asymptotics}}

Before proving Proposition \ref{prop:Q0_circ_beta_results}, we quickly analyze the asymptotics of $|\varphi_n(1)|^{-2}$ and $|\psi_n(1)|^{-2}$ under $Q$. The results below and part (ii) of Proposition \ref{prop:spectral_lebesgue_decomp} imply that for these Verblunsky coefficients, the spectral measures $\mu_{\bfalpha}$ are almost surely singular with respect to the Lebesgue measure.

\begin{proposition}\label{prop:Q_circ_beta_results}
With $Q$ probability one
\begin{align*}
\lim_{n \to \infty} \frac{\log |\varphi_n(1)|^{-2}}{\log n} = - \frac{2}{\beta} =  \lim_{n \to \infty} \frac{\log |\psi_n(1)|^{-2}}{\log n}.
\end{align*}
\end{proposition}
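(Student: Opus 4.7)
My plan is to repeat the truncation argument of Section \ref{sec:bounded_Ver}: after coupling the $\a_n$ to truncated versions $\wa_n$ with $|\wa_n| \leq 1-\dl$ (which still satisfy \eqref{eq:2ndmoment2} and \eqref{eq:3rdmoment2} and agree with $\a_n$ simultaneously in $n$ with $Q$-probability at least $1-\eps$), the Taylor expansion \eqref{eqn:log_circ_beta_formula} reads
\[
\log |\vphi_n(1)|^{-2} \;=\; 2 \sum_{k=0}^{n-1} \Re \g_k \;-\; 2 \sum_{k=0}^{n-1} (\Im \g_k)^2 \;+\; R_n,
\]
where $|R_n| \leq C \sum_{k=0}^{n-1} |\g_k|^3$ with $C = C(\dl)$ uniform in $\bfa$. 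Under $Q$, Lemma \ref{lemma:KS_modified_Verblunsky_law} tells us that the $\g_k$ are independent, rotationally symmetric, and $\g_k \eqlaw \a_k$; consequently $\E[\Re \g_k] = 0$, $\E[(\Im \g_k)^2] = \tfrac12 \E[|\g_k|^2] \sim 1/(\b k)$, and $\E[|\g_k|^3] = o(1/k)$.

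Taking expectations term by term immediately delivers the leading order
\[
\E[\log |\vphi_n(1)|^{-2}] \;=\; -\frac{2}{\b} \log n \;+\; o(\log n),
\]
so the proposition will reduce to showing that the fluctuations around the mean are $o(\log n)$ almost surely. For this I would apply the martingale strong law Lemma \ref{lem:martvar} with $b_n = \log n$ to each of the three sums above after centering; since the summands are independent, each centered sum is an $\F_n$-martingale. Their respective increment variances are bounded by $4\E[(\Re \g_k)^2] = O(1/k)$, by $4\E[|\g_k|^4] \leq 4\E[|\g_k|^2] = O(1/k)$ (using $|\g_k|\leq 1$), and by $C^2 \E[|\g_k|^6] \leq C^2 \E[|\g_k|^3] = o(1/k)$. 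In every case $\sum_k \E[X_k^2]/(\log k)^2 < \infty$ because $\sum_k 1/(k(\log k)^2) < \infty$, so each rescaled centered sum vanishes $Q$-a.s.\ and the limit for $\vphi_n$ follows.

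For the second-kind polynomials I would invoke the identification \eqref{eqn:second_kind_polynomials}: $\psi_n(z) = \vphi_n(z;\rmd \mu_{-\bfa})$. By rotational symmetry (Assumption \ref{as:indrot}), $-\bfa \eqlaw \bfa$ under $Q$, so the argument above applies verbatim to the measure $\mu_{-\bfa}$ and yields the same limit $-2/\b$. The only point requiring care is the uniform handling of the cubic error in \eqref{eqn:log_circ_beta_formula}: without the truncation $|\a_n| \leq 1-\dl$ the constant in $O(|\g_k|^3)$ is not $\bfa$-uniform and one cannot pass cleanly between pathwise estimates and moment estimates. Once the truncation is in place the entire proof is three essentially identical applications of the classical martingale strong law.
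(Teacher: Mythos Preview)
Your proposal is correct and follows essentially the same route as the paper: decompose via \eqref{eqn:log_circ_beta_formula}, observe that $\sum \Re\g_k$ is a mean-zero martingale under $Q$, center $\sum(\Im\g_k)^2$ and identify its mean as $\sim \frac{1}{\b}\log n$, apply Lemma~\ref{lem:martvar} with $b_n=\log n$ throughout, and then transfer to $\psi_n$ via $-\bfa\eqlaw\bfa$. The one place you are actually more careful than the paper is the cubic remainder $R_n$: the paper only records $\E[R_n]=o(\log n)$ in Section~\ref{sec:bounded_Ver} and then asserts the first two summands suffice, whereas you explicitly center $R_n$ and check the variance condition $\sum_k \E[|\g_k|^6]/(\log k)^2 \leq \sum_k \E[|\g_k|^3]/(\log k)^2 < \infty$ to get $R_n=o(\log n)$ almost surely.
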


\begin{proof}
We begin with the proof for $|\varphi_n(1)|^{-2}$. Using equation \eqref{eqn:log_circ_beta_formula} it is enough to determine the asymptotics of the first two summands. Recall that under $Q$ the modified Verblunskies $\gamma_k$ are independent and have the same law as the original Verblunskies $\alpha_k$. Therefore 
\begin{align*}
\sum_{k=0}^{n-1} \Re \gamma_k
\end{align*}
is a $Q$-martingale. The moment assumptions \eqref{eq:2ndmoment2} and \eqref{eq:3rdmoment2} show that it satisfies the conditions of Lemma \ref{lem:martvar} for $b_n = \log n$, and therefore is almost surely $o(\log n)$. For the second part, write
\begin{align*}
\sum_{k=0}^{n-1} (\Im \gamma_k)^2 = \sum_{k=0}^{n-1} \left [ (\Im \gamma_k)^2 - \E_{Q}[(\Im \gamma_k)^2] \right ] + \sum_{k=0}^{n-1} \E_{Q}[(\Im \gamma_k)^2].
\end{align*}
By construction the first summation on the right is a martingale, and again by the moment assumptions \eqref{eq:2ndmoment} and Lemma \ref{lem:martvar} it is $o(\log n)$ with probability one. For the second summation on the right, we have
\begin{align*}
\E_{Q}[(\Im \gamma_k)^2] = \frac{1}{2} \E_Q[|\gamma_k|^2] \sim \frac{1}{\beta k}.
\end{align*}
The first equality is by the rotation invariance of $\gamma_k$ and the second one is by \eqref{eq:2ndmoment2}. After collecting all signs and the necessary factors of two the proof for $|\varphi_n(1)|^{-2}$ is complete. The result for $|\psi_n(1)|^{-2}$ follows since $\psi_n(1; \bfalpha) = \varphi_n(1; -\bfalpha)$ and the sequence $-\bfalpha$ has the same law as $\bfalpha$ under $Q$.
\end{proof}

\subsection{Asymptotics Under $Q_0$ \label{sec:Q0_asymptotics}}

\subsubsection{First kind polynomials} \label{sec:firstkind}

Under $Q_0$ the proofs for $|\varphi_n(1)|^{-2}$ and $|\psi_n(1)|^{-2}$ are very different, in contrast with the computations under $Q$. We begin with $|\varphi_n(1)|^{-2}$, for which the first summand of \eqref{eqn:log_circ_beta_formula} is decomposed into
\begin{align*}
\sum_{k=0}^{n-1} \Re \gamma_k = \sum_{k=0}^{n-1} \left ( \Re \gamma_k - \E_{Q_0}[\Re \gamma_k] \right ) + \sum_{k=0}^{n-1} \E_{Q_0}[\Re \gamma_k].
\end{align*}
By construction, the first summation is a martingale under $Q_0$, and by \eqref{eq:2ndmoment2}, it satisfies the hypotheses of Lemma \ref{lem:martvar} for $b_n = \log n$, since  the law of $|\gamma_k|$ is the same under both $Q$ and $Q_0$ (see Proposition \ref{prop:coupling_formula}). Therefore we only need to consider the second summation, for which we have
\begin{align}\label{eqn:re_zeta_cond_expect}
\E_{Q_0}[\Re \gamma_k] = \Re \E_{Q_0}[\gamma_k] = \E_Q[|\gamma_k|^2],
\end{align}
the second equality coming from Proposition \ref{prop:coupling_formula}. Therefore by the moment assumptions \eqref{eq:2ndmoment2}
\begin{align*}
\lim_{n \to \infty} \frac{1}{\log n} \sum_{k=0}^{n-1} 2 \Re \gamma_k = \frac{4}{\beta},
\end{align*}
with $Q_0$-probability one. For the summation involving $(\Im \gamma_k)^2$ the same argument applies, one only needs to use the identity $2(\Im z)^2 = |z|^2 - \Re (z^2)$ and Proposition \ref{prop:coupling_formula} to compute that
\begin{align}\label{eqn:im_zeta_squared_cond_expect}
2\E_{Q_0}[(\Im \gamma_k)^2 ] = \E_{Q}[|\gamma_k|^2] - \E_{Q}[|\gamma_k|^4] \sim \frac{2}{\b k}.
\end{align}
Therefore
\begin{align*}
\lim_{n \to \infty} \frac{1}{\log n} \sum_{k=0}^{n-1} 2 (\Im \gamma_k)^2 &= \lim_{n \to \infty} \frac{1}{\log n} \sum_{k=0}^{n-1} E_Q[|\gamma_k|^2] - E_Q[|\gamma_k|^4] = \frac{2}{\beta},
\end{align*}
$Q_0$-almost surely.

\subsubsection{Second kind polynomials} \label{sec:secondkind}

We now study the second kind polynomials $\psi_n$, and we wish to show that
\[
\lim_{n \to \infty} \frac{\log |\psi_n(1)|^{-2}}{\log n} = -\frac{2}{\beta}.
\]
Recall that we will always use the assumption \eqref{eq:bddfrom1} that the $\a_n$, and thus the $\g_n$, are uniformly bounded away from 1.

To begin with, for each $z$ define a sequence of matrices $R_n(z)$ by
\[
R_n(z) =
\frac{1}{\sqrt{2}}
\begin{pmatrix}
\vphi_n(z) / \vphi_n(1) & - i \psi_n(z)/ \vphi_n(1) \\
\vphi_n^*(z) / \vphi_n^*(1) & i \psi_n^*(z) / \vphi_n^*(1)
\end{pmatrix}.
\]
The interest in $R_n(z)$ lies in the fact that
\begin{equation} \label{eq:Rnphin}
R_n^*(1) R_n(1) =
\begin{pmatrix}
1 & z_n \\
z_n & \left | \frac{\psi_n(1)}{\varphi_n(1)} \right |^2
\end{pmatrix}
\end{equation}
for some quantity $z_n$ that is irrelevant for our purposes. We want to understand the behavior of $|\psi_n(1)|$ under $Q_0$, and we already know from the previous section that $\log |\vphi_n(1)| \sim -1/\b \log n$. The Szeg\H{o} recursion \eqref{eqn:szego_recurrence_matrix} and some basic algebra shows that $R_n(z)$ obeys the recursion
\begin{equation} \label{eq:recRn}
R_{n+1}(z) = B_n Z R_n(z),
\end{equation}
where
\begin{equation}
B_n =
\begin{pmatrix}
(1 - \g_n)^{-1} & -\g_n (1 - \g_n)^{-1} \\
- \bar{\g_n}(1 - \bar{\g_n})^{-1} & (1 - \bar{\g_n})^{-1}
\end{pmatrix},
\quad Z = \begin{pmatrix} z & 0 \\ 0 & 1 \end{pmatrix}.
\end{equation}
By iterating \eqref{eq:recRn} we can write
\[
R_n(1) = B_{n-1} \cdots B_0 U, \quad
U =
\frac{1}{\sqrt{2}}
\begin{pmatrix}
1 & -i \\
1 & i
\end{pmatrix}.
\]
Since $U$ is a unitary matrix, we have
\begin{equation} \label{eq:RnCn}
R_n^*(1) R_n(1) = U^* (B_{n-1} \cdots B_0)^* (B_{n-1} \cdots B_0) U =  (C_{n-1} \cdots C_0)^*C_{n-1} \cdots C_0,
\end{equation}
where $C_k = U^* B_k U$. It is straightforward to compute that
\begin{equation} \label{eq:Cn}
C_k =
\begin{pmatrix}
1 & 2 \Im \frac{\g_k}{1 - \g_k} \\
0 & 1 + 2 \Re \frac{\g_k}{1 - \g_k}
\end{pmatrix}.
\end{equation}

\begin{remark}
The simplification in the shape of the matrices when transforming $B_n$ into $C_n$ can be understood as follows. First, $A_n$ is of the form
\[
\begin{pmatrix}
x & 1 - x \\
1 - \bar{x} & \bar{x}
\end{pmatrix},
\]
and hence corresponds to a M\H{o}bius transformation of the disk. Additionally, the vector $(1 \; 1)^t$ is an eigenvector with eigenvalue 1, which means that the point 1 on the disk is left invariant. Now, $U$ corresponds to the Cayley transformation that conformally maps the upper half-plane to the disk, and sends $\infty$ to 1, and conversely for $U^*$. The matrix $C_n = U^* A_n U$ therefore represents the image of that M\H{o}bius mapping in the upper half-plane, which therefore fixes $\infty$ (the image of $(1 \; 1)^t$). Hence, it is an affine map, and is therefore written as
\[
\begin{pmatrix}
1 & x \\
0 & y
\end{pmatrix},
\]
as verified by the above computation.
\end{remark}

If we write
\begin{equation} \label{eq:Cnxn}
C_{n-1} \cdots C_0 =
\begin{pmatrix}
1 & X_{n-1} \\
0 & Y_{n-1}
\end{pmatrix},
\end{equation}
then \eqref{eq:Cn} shows that $X_n$ and $Y_n$ satisfy the recurrence
\begin{equation} \label{eq:recxnyn}
X_{n} = X_{n-1} + 2 Y_{n-1} \Im \frac{\gamma_n}{1-\gamma_n}, \quad Y_{n} = Y_{n-1} \left(1 +2 \Re \frac{\g_n}{1-\g_n} \right).
\end{equation}
Note that from the expression \eqref{eqn:poisson_kernel_real_part} for the Poisson kernel we have the identity 
\begin{equation} \label{eqn:Yn_is_phin}
 Y_{n-1} = \prod_{k=0}^{n-1} P_{\D}(\g_k,1) = |\varphi_{n}(1)|^{-2}.
\end{equation}
Finally, \eqref{eq:Rnphin}, \eqref{eq:RnCn}, and \eqref{eq:Cnxn} imply that
\begin{equation} \label{eq:psiphixy}
\left | \frac{\psi_n(1)}{\varphi_n(1)} \right |^2 = X_{n-1}^2 + Y_{n-1}^2.
\end{equation} 
Therefore, it is enough to study the sequences $\{X_n\}$ and $\{Y_n\}$ as given by \eqref{eq:recxnyn}, under the law $Q_0$.

For $Y_n$ the analysis is simple enough: by \eqref{eqn:Yn_is_phin} and Proposition \ref{prop:Q0_circ_beta_results} that gives the asymptotics of $|\varphi_n(1)|^{-2}$ under $Q_0$, we have
\begin{equation} \label{eq:logyn}
\lim_{n \to \infty} \frac{\log Y_n}{\log n} = \frac{2}{\b}, \quad Q_0 \textrm{ almost surely.}
\end{equation}
In particular, for fixed $\eps, \eta > 0$, we have a constant $C > 0$ such that
\[
Y_k \leq C k^{2/\b + \eta/2}
\]
for all $k$, with $Q_0$ probability at least $1-\eps$.

Now, for the asymptotic behavior of $X_n$, by \eqref{eq:recxnyn} it is clearly enough to study
\begin{align}\label{eqn:non_truncated_martingale}
\sum_{k=1}^n Y_{k-1} \Im \frac{\g_k}{1-\g_k}
\end{align}
under $Q_0$. In fact we will study the truncated quantity 
\begin{equation}\label{eqn:truncated_martingale_defn}
 M_n = \sum_{k=1}^n (Y_{k-1} \wedge C k^{2/\beta + \eta/2}) \Im \frac{\g_k}{1-\g_k}.
\end{equation}
By the remark above we have that \eqref{eqn:non_truncated_martingale} and \eqref{eqn:truncated_martingale_defn} are equal with probability at least $1-\epsilon$. Moreover, the coupling relationship of Proposition \ref{prop:coupling_formula} gives us the identity
\begin{equation}\label{eq:vn}
 \E_{Q_0} \left[ \Im \frac{\g_n}{1-\g_n} \right] = \E_Q \left[ \frac{\Im \g_n}{1 - |\g_n|^2} \right] = 0,
\end{equation}
with the last equality following from the rotation invariance of $\g_n$ under $Q$. Hence \eqref{eqn:non_truncated_martingale} and \eqref{eqn:truncated_martingale_defn} are both mean zero martingales, by the measurability of $Y_{k-1}$ with respect to $\F_{k-1}$. The variance of each term of \eqref{eqn:truncated_martingale_defn} is bounded by
\begin{equation}
 \E_{Q_0} \!\! \left[ (Y_{k-1} \wedge C k^{2/\beta + \eta/2})^2 \left( \Im \frac{\g_k}{1-\g_k} \right)^2 \right] \leq C^2 k^{4/\beta + \eta} \E_{Q} \!\! \left[ \left( \frac{\Im \g_n}{1 - |\g_n|^2} \right)^2 \right] \leq C' k^{4/\beta + \eta - 1},
\end{equation}
where we use \eqref{eq:bddfrom1}. Now, an appeal to Lemma \ref{lem:martvar} with $b_k = k^{2/\beta + \eta}$ allows us to conclude that
\[
\frac{M_n}{n^{2/\beta + \eta}} \to 0, \quad Q_0-\textrm{a.s.}
\]
Since $M_n$ is equal to \eqref{eqn:non_truncated_martingale} with probability at least $1-\epsilon$, and since the recursion \eqref{eq:recxnyn} tells us that \eqref{eqn:non_truncated_martingale} has the same asymptotics as $X_n$, we conclude that
\[
 \lim_{n \to \infty} \frac{X_n}{n^{2/\beta + \eta}} = 0
\]
with $Q_0$ probability at least $1-\epsilon$. Since this holds for arbitrary $\epsilon$ and $\eta$, we finish by using \eqref{eq:psiphixy} and \eqref{eq:logyn} to conclude that
\[
\lim_{n \to \infty} \frac{1}{\log n} \log \left | \frac{\psi_n(1)}{\vphi_n(1)} \right |^2 = \frac{4}{\b}, \quad Q_0-\mathrm{ a.s.}
\]
Since we already proved in Section \ref{sec:firstkind} that
\[
\lim_{n \to \infty} \frac{\log | \vphi_n(1) |^{-2}}{\log n} = \frac{2}{\b}, \quad Q_0-\mathrm{ a.s.},
\]
the proof of Proposition \ref{prop:Q0_circ_beta_results} follows.

%
%
%


\section{Large Deviations for the Norm \label{sec:LDP}}

Jitomirskaya-Last dimension theory requires strong control on the norm
\[
\|\varphi_{\cdot}(1)\|_n^2 = \sum_{k=0}^n |\varphi_k(1)|^2.
\]
In the previous section we obtained this control by finding precise asymptotics for $\log |\varphi_n(1)|^2$ as $n \to \infty$, using that it is a sum of independent variables (under both $Q$ and $Q_0$). From this we derived that
\[
\lim_{n \to \infty} \frac{\log \|\varphi_{\cdot}(1)\|_n^2}{\log n} = 1 + \frac{2}{\beta}, \, Q-\mathrm{a.s.}, \quad \lim_{n \to \infty} \frac{\log \|\varphi_{\cdot}(1)\|_n^2}{\log n} = 1 - \frac{2}{\beta}, \, Q_0-\mathrm{a.s.}
\]
In this section we derive a large deviations principle (LDP) for the quantity
\[
\Upsilon_n := \frac{\log \|\varphi_{\cdot}(1)\|_n^2}{\log n},
\]
i.e. the asymptotic probability that $\Upsilon_n$ takes on an atypical value as $n \to \infty$. Such probabilities are useful for estimating the probability that the spectral measure of an interval, $\mua(\theta - \epsilon, \theta + \epsilon)$, decays atypically as $\epsilon \to 0$, for $\theta$ chosen according to $\mua$ or to the Lebesgue measure. The large deviations analysis is somewhat delicate because the scale of the terms $|\varphi_k(1)|^2$ changes as $k$ increases. Moreover we need to get control on the large deviations behavior of the entire process $k \mapsto |\varphi_k(1)|^2$ in order to get control on the behavior of the norm $\|\varphi_{\cdot}(1)\|_n^2$. To this end we analyze the large deviations of the sequence of processes
\begin{align}\label{eqn:Zn1}
Z_n(t) := \frac{\log |\varphi_{k_n(t)}(1)|^{-2}}{\log n}, \quad t \in [0,1],
\end{align}
as random elements of the Skorohod space $\D([0,1])$, for an appropriately chosen time scale $k_n$. We obtain a functional LDP for the sequence $(Z_n)$ and convert it into an LDP for $(\Upsilon_n)$, using the fact that $\|\varphi_{\cdot}(1)\|_n^2$ is a functional of $(|\varphi_k(1)|^2)$. Going from a process level LDP to one for a sequence of random numbers is an example of a \textit{contraction principle}. Process level LDPs are often easier to obtain, and that is also true in this case because $\log |\varphi_k(1)|^{-2}$ is the sum of independent random variables. The process level LDP for $(Z_n)$ is also helpful for understanding how the Verblunsky coefficients behave when they produce an atypical value of $(\Upsilon_n)$.

\subsection{Process Level LDP for Sums of Independent Variables}

The standard process level result for sums of \emph{i.i.d.} variables is Mogulskii's theorem, see \cite[Chapter 5]{DZ:LD}. Here is a quick summary: assume the iid random variables $X_1, X_2, \ldots$, taking values in $\R$, satisfy that
\[
\Lambda(\lambda) = \log \E[e^{\lambda X_i}]
\]
is finite for all $\lambda \in \R$. Define the process $S_n : [0,1] \to \R$ by
\[
S_n(t) = \frac{1}{n} \sum_{i=1}^{nt} X_i,
\] 
which we consider as a random element of the Skorohod space $\D([0,1])$. Then Mogulskii's theorem states that the sequence of probability laws induced by $S_n$ satisfies an LDP with speed $n$ and rate function
\begin{align}\label{eqn:Mogulskii_rate_fcn}
I(g) = \begin{cases}
\int_0^1 \Lambda^*(g(t)) \, dt, & \textrm{if $g$ is absolutely continuous and $g(0) = 0$,} \\
\pinf, & \textrm{otherwise,}
\end{cases}
\end{align}
where $\Lambda^*$ is the Fenchel-Legendre transform of $\Lambda$ defined by
\[
\Lambda^*(x) = \sup_{\lambda \in \R} \left \{ \lambda x - \Lambda(\lambda) \right \}.
\]
Recall that the precise statement of the LDP is that for every Borel set $A \subset \D([0,1])$, we have
\[
-\inf_{g \in A^{\circ}} I(g) \leq \liminf_{n \to \infty} \frac{1}{n} \log \P(S_n \in A) \leq \limsup_{n \to \infty} \frac{1}{n} \log \P(S_n \in A) \leq -\inf_{g \in \bar{A}} I(g)
\] 
where $A^{\circ}$ and $\bar{A}$ are the interior and closure of $A$, respectively, in the Skorohod topology. 

The process level large deviations principle for $\log |\varphi_n(1)|^{-2}$ does not fall exactly into this framework, since by equation \eqref{eqn:phi_psi_prod_formula} we have
\[
\log |\varphi_n(1)|^{-2} = \sum_{k=0}^{n-1} \log P_{\D}(\gamma_k, 1).
\]
The $\gamma_k$ are still independent (under both $Q$ and $Q_0$) but the distribution of the $\gamma_k$ is changing and concentrating around zero as $k \to \infty$. Thus we need a version of Mogulskii's theorem in which the random variables $X_i$ are independent but not necessarily identically distributed. This should be a standard result but we were unable to find it in the literature, so we spend the rest of this section deriving it. Most of the proof is a modification of the one for Mogulskii's theorem in \cite[Chapter 4]{FK:LD}, with some modifications from \cite[Chapter 5]{DZ:LD}. The statement is the following.

\begin{theorem}\label{thm:sum_LDP}
Let $(X_k)$ be a sequence of independent random variables taking values in $\R$ such that
\[
\Lambda_k(\lambda) := \log \E[ e^{\lambda X_k}]
\] 
is finite in an open interval around zero. Moreover, assume there is a sequence of positive numbers $(c_k)$ such that
\[
\Lambda(\lambda) := \lim_{k \to \infty} \frac{1}{c_k} \Lambda_k(\lambda)
\]
exists and is finite for every $\lambda \in \R$. Further assume that $(c_k)$ and $\Lambda$ satisfy
\begin{enumerate}[(a)]
\item $\log n = O(K_n)$ and $\max_{k \leq n} c_k = o(K_n)$, where $K_n := c_1 + \ldots + c_n$;
\item the function $\Lambda$ is differentiable and steep, the latter meaning that
\[
\lim_{\lambda \to \pm \infty} \Lambda'(\lambda) = \infty.
\]
\end{enumerate}
Now, for $n \geq 1$ and $1 \leq k \leq n$, define a mesh $t_{k,n}$ in $[0,1]$ by
\[
t_{k,n} = (c_1 + \ldots + c_k)/K_n,
\]
and $t_{0,n} = 0$. Then define a time scale $k_n : [0,1] \to \{1,\ldots,n \}$ by $k_n(t) = k$ for $t \in [t_{k-1,n}, t_{k,n})$, and consider the partial sum process
\begin{align}\label{eqn:Zn2}
Z_n(t) = \frac{1}{K_n} \sum_{i=1}^{k_n(t)} X_i, \quad t \in [0,1].
\end{align}
Then the processes $Z_n$ satisfy a LDP with speed $K_n$ and rate function \eqref{eqn:Mogulskii_rate_fcn}, in the topology of uniform convergence.
\end{theorem}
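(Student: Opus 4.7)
The plan is to adapt the classical proof of Mogulskii's theorem (as in Dembo--Zeitouni Chapter 5 or Feng--Kurtz Chapter 4) to the non-identically distributed setting. The crucial structural observation is that the mesh $t_{k,n}$ was defined precisely so that for any $0 \leq s \leq t \leq 1$, the scaled cumulant of the increment $Z_n(t) - Z_n(s)$ has the clean limit
\[
\frac{1}{K_n} \log \E\!\left[e^{\lambda K_n (Z_n(t) - Z_n(s))}\right] = \frac{1}{K_n} \sum_{i=k_n(s)+1}^{k_n(t)} \Lambda_i(\lambda) \;\longrightarrow\; (t-s) \, \Lambda(\lambda),
\]
since $\sum_{i=k_n(s)+1}^{k_n(t)} c_i / K_n \to t-s$ by construction and $\Lambda_i(\lambda)/c_i \to \Lambda(\lambda)$ by hypothesis. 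Pointwise convergence can be upgraded to uniform convergence on compact $\lambda$-intervals because each $\Lambda_k/c_k$ is convex and $\Lambda$ is finite and differentiable.

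First, I would establish the finite-dimensional LDP. The display above combined with the Gärtner--Ellis theorem (using that $\Lambda$ is differentiable and steep) gives that each increment $Z_n(t) - Z_n(s)$ satisfies an LDP with speed $K_n$ and rate function $(t-s)\Lambda^*(\cdot/(t-s))$. Independence of the $X_i$ then gives independence of increments of $Z_n$, so for any partition $0 = t_0 < t_1 < \cdots < t_m = 1$ the vector $(Z_n(t_1), \ldots, Z_n(t_m))$ satisfies the multidimensional LDP with rate function $\sum_{j=1}^m (t_j - t_{j-1}) \Lambda^*\!\left(\frac{x_j - x_{j-1}}{t_j - t_{j-1}}\right)$ where $x_0 := 0$.

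Second, I would prove exponential tightness in the uniform topology. The key step is the oscillation bound: for $\lambda, \delta > 0$, a Doob-type submartingale inequality applied to $\exp(\lambda K_n Z_n(\cdot)) / \prod_{i \leq k_n(\cdot)} e^{\Lambda_i(\lambda)}$ yields
\[
\P\!\left(\sup_{s \leq u \leq t} |Z_n(u) - Z_n(s)| > \delta\right) \leq 2 e^{-\lambda K_n \delta} \exp\!\left(\sum_{i=k_n(s)+1}^{k_n(t)} \Lambda_i(\lambda) + \sum_{i=k_n(s)+1}^{k_n(t)} \Lambda_i(-\lambda)\right).
\]
Partitioning $[0,1]$ into $\lceil 1/\eta\rceil$ subintervals of length $\eta$, taking a union bound, and using the uniform convergence of $\Lambda_i/c_i$ gives
\[
\limsup_{n \to \infty} \frac{1}{K_n} \log \P\!\left(\sup_{|t-s| < \eta} |Z_n(t) - Z_n(s)| > \delta\right) \leq -\lambda \delta + \eta \bigl(\Lambda(\lambda) + \Lambda(-\lambda)\bigr),
\]
which can be driven to $-\infty$ by first sending $\lambda \to \infty$ (using steepness of $\Lambda$, equivalently superlinear growth of $\Lambda^*$) and then $\eta \downarrow 0$. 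The assumption $\log n = O(K_n)$ ensures that the $O(1/\eta)$ union bound is absorbed at the LDP scale, and $\max_{k \leq n} c_k = o(K_n)$ guarantees that the step-function jumps of $Z_n$ vanish exponentially, so that sample paths concentrate on $C([0,1])$.

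Finally, combining the finite-dimensional LDP with exponential tightness via the standard projective limit argument yields the process level LDP in the uniform topology with rate function $\sup_{0 = t_0 < \cdots < t_m = 1} \sum_{j=1}^m (t_j - t_{j-1}) \Lambda^*\!\left(\frac{g(t_j) - g(t_{j-1})}{t_j - t_{j-1}}\right)$; a standard convexity plus lower-semicontinuity argument (Jensen for the upper bound, approximation by simple functions for the lower) identifies this supremum with $\int_0^1 \Lambda^*(g'(t))\,dt$ when $g$ is absolutely continuous with $g(0) = 0$, and with $+\infty$ otherwise. The main obstacle will be Step 3, the exponential tightness, where non-identical distributions force us to simultaneously exploit $\max_k c_k = o(K_n)$, the convexity-based upgrade to uniform convergence of $\Lambda_k/c_k$, and steepness of $\Lambda$; everything else is a fairly mechanical translation of the iid Mogulskii proof.
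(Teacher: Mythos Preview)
Your proposal is correct and follows essentially the same route as the paper: a finite-dimensional LDP via G\"artner--Ellis (using the Stolz--Ces\`aro type limit you identify), exponential tightness via an oscillation bound on partitioned subintervals, and then the projective-limit/Feng--Kurtz machinery to pass to the uniform topology and identify the rate function. The only cosmetic differences are that the paper splits the tightness step into Skorohod exponential tightness plus a separate $C$-exponential tightness lemma (the latter is where $\log n = O(K_n)$ and $\max_k c_k = o(K_n)$ are actually used, to control a union bound over the $n$ jump times), and uses exponential Chebyshev at partition endpoints rather than a Doob maximal inequality; your Doob approach is in fact slightly cleaner for controlling the full oscillation, and your order-of-limits for the tightness bound should be $\eta \downarrow 0$ then $\lambda \to \infty$ (or equivalently optimize to get $-\eta\,\Lambda^*(\pm\delta/\eta) \to -\infty$), not the reverse.
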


Note that the process $Z_n(t)$ of \eqref{eqn:Zn1} is just a special case of \eqref{eqn:Zn2} with $X_k = \log P_{\D}(\gamma_k,1)$ and $c_k$ still to be determined; we will later see that the appropriate choice is $c_k = 1/k$. For many choices of Verblunsky coefficients and $k$ small it will often be true that
\[
\E[\P_{\D}(\gamma_k, 1)^{\lambda}] = \infty,
\]
at least for $\lambda$ sufficiently large, which is why we do not assume that the $\Lambda_k$ are finite for all $\lambda$. However as long as the $\Lambda_k$ eventually become finite at each $\lambda$ and converge to some limiting value there are no difficulties. We will verify that the conditions of Theorem \ref{thm:sum_LDP} hold for a broad class of Verblunsky coefficients in the next section. The steepness condition on $\Lambda$ ensures that its Fenchel-Legendre transform $\Lambda^*$ is a good convex rate function (see \cite[Theorem 2.3.6]{DZ:LD}), which is implicitly used in the proofs to come. Also note that the topology of the theorem is uniform convergence, even though the processes $Z_n$ have jumps at the times $t_{k,n}$ and are therefore in the Skorohod space $\D([0,1])$. Thus one of the conclusions of the theorem is that the jumps can be ignored, or the process can be made continuous by linear interpolation without any modification to the LDP. We will use the latter fact later to obtain the LDP for $\Upsilon_n$.

We break the proof of Theorem \ref{thm:sum_LDP} into three lemmas. In the first, Lemma \ref{lem:exp_tightness}, we show that the processes $Z_n$ are exponentially tight in the Skorohod space $\D([0,1])$, meaning that the probability of the process not being in a compact set decays faster than any exponential in $K_n$. In the second lemma, we show that the $Z_n$ are $C$-exponentially tight, meaning that they are asymptotically continuous in a strong enough sense. This allows us to prove the LDP in the topology of uniform convergence rather than just the Skorohod topology. The final lemma proves an LDP for finite dimensional distributions of the process, and then combined with exponential tightness, this allows us to infer the process-level LDP. The latter is essentially a version of the Dawson-G\"{a}rtner theorem, although we use the approach of \cite[Chapter 4.7]{FK:LD}. In these lemmas we need the following straightforward extension of the Stolz-Ces\`{a}ro theorem, whose proof is an easy exercise.

\begin{lemma}\label{lem:cesaro}
Assume that $(c_k)$ is a sequence of positive numbers with $K_n = c_1 + \ldots + c_n \to \infty$. Assume that $(d_k)$ is another sequence of numbers with $d_n/c_n \to \ell \in \R$ as $n \to \infty$, and that there are integer sequences $(r_n^{\pm})$ such that 
\[
\lim_{n \to \infty} \frac{1}{K_n} \sum_{i = r_n^-}^{r_n^+} c_i = u.
\]
Then 
\[
\lim_{n \to \infty} \frac{1}{K_n} \sum_{i=r_n^-}^{r_n^+} d_i = u \, \ell.
\]
\end{lemma}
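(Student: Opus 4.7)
The plan is to proceed by the standard Stolz-Ces\`aro splitting. Write $d_i = \ell c_i + (d_i - \ell c_i)$ and define $\eps_i = d_i/c_i - \ell$, so that by hypothesis $\eps_i \to 0$ as $i \to \infty$. Then
\[
\frac{1}{K_n} \sum_{i=r_n^-}^{r_n^+} d_i = \ell \cdot \frac{1}{K_n} \sum_{i=r_n^-}^{r_n^+} c_i + \frac{1}{K_n} \sum_{i=r_n^-}^{r_n^+} \eps_i c_i.
\]
The first term converges to $\ell u$ directly from the assumption, so the whole task reduces to showing that the error term tends to $0$.

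To control the error, fix $\eta > 0$ and choose $N$ so large that $|\eps_i| \leq \eta$ for all $i \geq N$. Split the sum at index $N$: contributions from indices $i \geq N$ are bounded by $\eta \cdot \frac{1}{K_n} \sum_{i=r_n^-\vee N}^{r_n^+} c_i$, which is at most $\eta \cdot \frac{1}{K_n} \sum_{i=r_n^-}^{r_n^+} c_i$, hence bounded in the limit by $\eta u$. The contribution from indices $i < N$ (which is nonempty only if $r_n^- < N$) involves only finitely many $d_i$'s, so $\sum_{i=r_n^-}^{\min(N-1,r_n^+)} |\eps_i c_i|$ is bounded by a constant $C_N$ depending only on $N$ and the first $N-1$ terms of the sequences. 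Since $K_n \to \infty$, this contribution is $O(1/K_n) \to 0$.

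Combining, $\limsup_n \bigl|\frac{1}{K_n} \sum_{i=r_n^-}^{r_n^+} \eps_i c_i\bigr| \leq \eta u$, and sending $\eta \downarrow 0$ gives that the error vanishes. Hence $\frac{1}{K_n} \sum_{i=r_n^-}^{r_n^+} d_i \to \ell u$, as claimed. There is no real obstacle here; the only minor subtlety is allowing $r_n^-$ to be bounded, which is handled by the observation that any finite initial segment contributes an $O(1/K_n)$ term since $K_n \to \infty$.
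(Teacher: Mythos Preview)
Your proof is correct and complete; this is exactly the standard Stolz--Ces\`aro splitting one would expect. The paper itself does not supply a proof of this lemma, simply calling it ``an easy exercise,'' so there is nothing to compare against---your argument is precisely the intended one.
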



\begin{lemma}\label{lem:exp_tightness}
The laws of $Z_n$ are exponentially tight in $\D([0,1])$, meaning that for every $M < \infty$, there exists a compact set $E_{M} \subset \D([0,1])$ such that
\[
\limsup_{n \to \infty} \frac{1}{K_n} \log \P(Z_n \in E_M) < -M.
\]
\end{lemma}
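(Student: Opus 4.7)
The plan is to construct, for each $M > 0$, a compact set $E_M \subset \D([0,1])$ consisting of functions that are uniformly bounded and whose uniform modulus of continuity $w(f, \dl) = \sup_{|s-t| < \dl} |f(s) - f(t)|$ is suitably controlled; by the Arzel\`{a}--Ascoli characterization of compact subsets of $\D([0,1])$ (and the fact that the Skorohod modulus $w'$ is bounded above by $w$), such sets are precompact. Explicitly I would take
\[
E_M = \{ f \in \D([0,1]) : \|f\|_{\infty} \leq B_M \text{ and } w(f, \dl_j) \leq 1/j \text{ for all } j \geq 1 \}
\]
for appropriately chosen constants $B_M$ and a sequence $\dl_j \downarrow 0$ depending on $M$.

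The main tool will be Doob's maximal inequality applied to the exponential martingale $N_k(\lambda) = \exp(\lambda S_k - \sum_{i \leq k} \Lambda_i(\lambda))$, where $S_k = X_1 + \cdots + X_k$. For $\lambda$ at which all (sufficiently late) $\Lambda_i(\lambda)$ are finite, this yields
\[
\P \Big( \max_{k_1 \leq k \leq k_2} (S_k - S_{k_1}) \geq u \Big) \leq \exp \Big( -\lambda u + \sum_{i=k_1+1}^{k_2} \Lambda_i(\lambda) \Big),
\]
with an analogous inequality for the minimum via $-\lambda$. Applied to $Z_n$ on an interval $[s, s+\dl] \subset [0,1]$ with $k_1 = k_n(s)$, $k_2 = k_n(s+\dl)$, and combined with Lemma \ref{lem:cesaro}, this gives the central estimate
\[
\P \Big( \sup_{t \in [s, s+\dl]} |Z_n(t) - Z_n(s)| \geq u \Big) \leq 2 \exp \Big( -K_n ( \lambda u - \dl (\Lambda(\lambda) \vee \Lambda(-\lambda)) - o(1) ) \Big).
\]

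With this estimate in hand I would proceed in two steps. First, applying the bound with $s = 0$, $\dl = 1$ and optimizing over $\lambda$ using the steepness assumption (which guarantees that $\Lambda^*$ is finite everywhere and superlinear at infinity) extracts a uniform bound $\P(\sup_t |Z_n(t)| > B_M) \leq e^{-(M+1) K_n}$ for large $n$. Second, for each $j$ I partition $[0,1]$ into $O(1/\dl_j)$ intervals of length $\dl_j/2$, bound the modulus $w(Z_n, \dl_j/2)$ by twice the maximum fluctuation over such intervals, and apply a union bound. Since $\dl \Lambda^*(\eta/\dl) \to \infty$ as $\dl \to 0$ (again by steepness), for each $j$ one can choose $\dl_j$ small enough that $\P(w(Z_n, \dl_j) > 1/j) \leq 2^{-j} e^{-(M+1) K_n}$ for all $n$ large. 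Summing the geometric series and combining with the sup-norm bound yields $\P(Z_n \notin E_M) \leq 3 e^{-(M+1) K_n} \leq e^{-M K_n}$ for $n$ large, which is exponential tightness.

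The main technical obstacle will be establishing that the $o(1)$ term in the central estimate is uniform in $s$. This is where the hypothesis $\max_{k \leq n} c_k = o(K_n)$ enters: it forces the mesh $\max_k(t_{k,n} - t_{k-1,n})$ to vanish, so that the Cesaro-type convergence $\frac{1}{K_n} \sum_{i=k_n(s)+1}^{k_n(s+\dl)} \Lambda_i(\lambda) \to \dl \, \Lambda(\lambda)$ holds uniformly in $s \in [0, 1-\dl]$. A minor related issue is that $\Lambda_k(\lambda)$ need not be finite for every $k$ at every $\lambda \in \R$, only for sufficiently large $k$; this is handled by splitting off an initial block of $N$ terms, whose contribution is a bounded additive constant that is absorbed into the $o(1)$ term since $K_n \to \infty$.
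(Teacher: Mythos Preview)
Your overall strategy---control a sup bound and a modulus of continuity via an exponential martingale inequality, then assemble a compact set---is essentially the same route the paper takes, and your use of Doob's maximal inequality is a genuine improvement over the paper's plain exponential Chebyshev bound on endpoint increments. The paper appeals to \cite[Theorem 4.1]{FK:LD} rather than building $E_M$ explicitly, and it works throughout with the Skorohod modulus $\omega'$ rather than the uniform modulus $w$.

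That last difference, however, is where your argument breaks. Your set
\[
E_M = \{ f \in \D([0,1]) : \|f\|_{\infty} \leq B_M,\ w(f,\delta_j) \leq 1/j \text{ for all } j \geq 1 \}
\]
is contained in $C([0,1])$: any $f$ with a jump of size $J>0$ satisfies $w(f,\delta) \geq J$ for every $\delta>0$, hence violates $w(f,\delta_j) \leq 1/j$ once $j > 1/J$. But $Z_n$ is a step function with jumps $|X_k|/K_n$, and under the standing hypotheses these are almost surely nonzero. Consequently $\P(Z_n \notin E_M) = 1$ for every $n$, and no choice of $B_M$ and $(\delta_j)$ can make this small. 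Concretely, your claim that ``for each $j$ one can choose $\delta_j$ so that $\P(w(Z_n,\delta_j) > 1/j) \leq 2^{-j} e^{-(M+1)K_n}$ for all $n$ large'' is true, but the threshold on $n$ depends on $j$; for any fixed $n$ the terms with $j$ large are close to $1$ (indeed $\P(w(Z_n,\delta_j) > 1/j) \geq \P(\max_k |X_k|/K_n > 1/j) \to 1$ as $j \to \infty$), so the series diverges and the union bound gives nothing.

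The fix is exactly what the paper does: replace $w$ by the Skorohod modulus $\omega'$. Since $\omega'$ is an infimum over partitions whose interval endpoints can absorb the jumps of a step function, $\omega'(Z_n,\delta)$ is controlled by the maximal oscillation of $Z_n$ over subintervals of length $\delta$, and your Doob bound applies verbatim. Compactness in $\D([0,1])$ is then the standard Arzel\`a--Ascoli criterion in terms of $\omega'$, and the rest of your outline (including the uniformity-in-$s$ and finitely-many-bad-$\Lambda_k$ remarks, both of which are correct) goes through unchanged.
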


\begin{proof}
The proof is based on \cite[Theorem 4.1]{FK:LD}. We first show that, for each fixed $t \in [0,1]$, the sequence $(Z_n(t))$ is exponentially tight in $\R$. Let $k_n = k_n(t)$, so that
\[
Z_n(t) = \frac{1}{K_n} \sum_{i=1}^{k_n} X_i.
\]
Then for $L > 0$, the exponential Chebyshev inequality implies that
\[
\frac{1}{K_n} \log \P(Z_n(t) > L) \leq -L + \frac{1}{K_n} \sum_{i=1}^{k_n} \Lambda_i(1).
\]
By definition of $K_n$ and the assumption that $\Lambda_i(1)/c_i \to \Lambda(1)$ as $i \to \infty$, it follows from the Stolz-C\`{e}saro type theorem in Lemma \ref{lem:cesaro} that the right hand side tends to $t \Lambda(1)$ as $n \to \infty$. The same argument produces a similar $L$ dependent bound for $\P(Z_n(t) < -L)$, and then taking $L \to \infty$ shows the exponential tightness.

Now we prove that the $Z_n$ are tight in the Skorohod space $\D([0,1])$. As in \cite[Theorem 4.1]{FK:LD} this is done by using the pointwise exponential tightness above and proving that
\[
\lim_{\delta \to 0} \lim_{n \to \infty} \frac{1}{K_n} \log \P(\omega'(Z_n, \delta) > \epsilon) = -\infty,
\] 
where $\omega' : \D([0,1]) \times \R \to [0, \infty]$ is the modulus-of-continuity type object defined by
\[
\omega'(f,\delta) = \inf_{(s_j)} \max_j \sup_{s_{j-1} \leq s,t \leq s_j} |f(s) - f(t)|, 
\]
and the infimum is over all partitions of $[0,1]$ with mesh size (minimum length of an interval) greater than $\delta$. We will only need to consider the partition with all intervals the same size and of length $\delta$, except for potentially the last one having longer length in $[\dl, 2 \dl)$. For $s < t$ we have by definition of $Z_n$ that
\[
Z_n(t) - Z_n(s) = \frac{1}{K_n} \sum_{i = k_n(s) + 1}^{k_n(t)} X_i.
\]
Therefore, by using the equisized partition mentioned above we have
\[
\omega'(Z_n, \delta) \leq \max_{0 \leq j < 1/\delta} \frac{1}{K_n} \left| \sum_{i=k_n(j \delta) + 1}^{k_n((j+1)\delta)} X_i \right|.
\]
We then apply the standard union bound and replace the summation with the maximum of the summands to obtain
\[
\P(\omega'(Z_n, \delta) > \epsilon) \leq \delta^{-1} \max_{0 \leq j < 1/\delta} \P \left( \frac{1}{K_n} \left| \sum_{i=k_n(j \delta) + 1}^{k_n((j+1)\delta)} X_i \right| > \epsilon \right).
\]
Each of the probabilities above can be bounded by the exponential Chebyshev inequality to obtain
\[
\P(\omega'(Z_n, \delta) > \epsilon) \leq \delta^{-1} e^{-\lambda K_n \epsilon} \max_{0 \leq j < 1/\delta} \max_{\pm \lambda} \prod_{i=k_n(j \delta) + 1}^{k_n((j+1)\delta)} \E e^{\pm \lambda X_i},
\]
where the inner maximum means the larger over the indicated terms with $\lambda$ or $-\lambda$. Therefore by taking logarithms we have
\begin{align}\label{eqn:bound1}
\frac{1}{K_n} \log \P(\omega'(Z_n, \delta) > \epsilon) \leq -\frac{\log \delta}{K_n} - \lambda \epsilon + \max_{0 \leq j < 1/\delta} \max_{\pm \lambda} \frac{1}{K_n} \sum_{i=k_n(j \delta) + 1}^{k_n((j+1)\delta)} \Lambda_i(\pm \lambda).
\end{align}
However, by definition of $K_n$ and the functions $k_n$, we have
\[
\lim_{n \to \infty} \frac{1}{K_n} \sum_{i=k_n(j \delta) + 1}^{k_n((j+1)\delta)} \!\!\!\! c_i = (j+1) \delta - j \delta = \delta,
\]
and since $\Lambda_i(\pm \lambda)/c_i \to \Lambda(\pm \lambda)$ as $i \to \infty$, the Stolz-C\`{e}saro theorem of Lemma \ref{lem:cesaro} implies that
\[
\lim_{n \to \infty} \frac{1}{K_n} \sum_{i=k_n(j \delta) + 1}^{k_n((j+1)\delta)} \Lambda_i(\pm \lambda) = \delta \, \Lambda(\pm \lambda).
\]
Therefore by taking the limsup of \eqref{eqn:bound1} we obtain
\[
\limsup_{n \to \infty} \frac{1}{K_n} \sum_{i=k_n(j \delta) + 1}^{k_n((j+1)\delta)} \Lambda_i(\pm \lambda) \leq - \lambda \epsilon + \delta \max_{\pm \lambda} \Lambda(\pm \lambda) = -\delta \min_{\pm \lambda} \{ \lambda \epsilon/\delta - \Lambda(\pm \lambda) \}.
\]
This is true for all $\lambda > 0$, therefore we can optimize the right hand side over $\lambda$ to obtain
\begin{align}\label{eqn:mod_cont_bound}
\limsup_{n \to \infty} \frac{1}{K_n} \log \P(\omega'(Z_n, \delta) > \epsilon) \leq -\delta \min_{\pm \lambda} \Lambda^*(\pm \epsilon/\delta).
\end{align}
Then the steepness condition $\Lambda'(\lambda) \to \infty$ as $\lambda \to \pm \infty$ implies that $\delta \Lambda^*(\pm \epsilon/\delta) \to \infty$ as $\delta \to 0$, so the above exactly shows that the $Z_n$ are exponentially tight in the Skorohod space $\D([0,1])$.

\end{proof}

\begin{lemma}\label{lem:exp_equivalence}
The processes $Z_n$ are $C$-exponentially tight in $D([0,1])$, meaning that for every $\delta > 0$
\[
\limsup_{n \to \infty} \frac{1}{K_n} \log \P \left( \sup_{t \in[0,1]} |Z_n(t) - Z_n(t-)| > \delta \right) = -\infty.
\]
\end{lemma}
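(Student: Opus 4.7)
The first step is to exploit the very simple structure of $Z_n$: it is a piecewise constant càdlàg function whose only discontinuities occur at the mesh points $t_{k,n}$ with jump size $|X_k|/K_n$. Therefore
\[
\sup_{t \in [0,1]} |Z_n(t) - Z_n(t-)| = \max_{1 \leq k \leq n} \frac{|X_k|}{K_n},
\]
and the statement reduces to establishing that $K_n^{-1} \log \P\bigl(\max_{k \leq n} |X_k| > \delta K_n\bigr) \to -\infty$. I would then apply a union bound and the exponential Chebyshev inequality: for any $\lambda > 0$ and any $k$ with $\Lambda_k(\pm\lambda) < \infty$,
\[
\P(|X_k| > \delta K_n) \leq e^{-\lambda \delta K_n + \Lambda_k(\lambda)} + e^{-\lambda \delta K_n + \Lambda_k(-\lambda)}.
\]

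The key estimate is that $\Lambda_k(\pm\lambda) = o(K_n)$ uniformly in $k \leq n$. Indeed, since $\Lambda(\pm\lambda)$ is finite by hypothesis, the convergence $\Lambda_k(\pm\lambda)/c_k \to \Lambda(\pm\lambda)$ implies $\Lambda_k(\pm\lambda) \leq (|\Lambda(\pm\lambda)|+1)\, c_k$ for all $k$ beyond some index $k_0 = k_0(\lambda)$, and then the assumption $\max_{k \leq n} c_k = o(K_n)$ yields the desired uniform bound. Summing over $k \geq k_0$ and combining with the hypothesis $\log n = O(K_n)$ gives
\[
\frac{1}{K_n} \log \sum_{k=k_0}^n \P(|X_k| > \delta K_n) \leq -\lambda \delta + \frac{\log(2n)}{K_n} + o(1) \leq -\lambda \delta + C
\]
for some constant $C$. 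Since $\lambda > 0$ is arbitrary, sending $\lambda \to \infty$ produces the $-\infty$ limit.

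The one real obstacle is the possibility that $\Lambda_k(\lambda) = +\infty$ for some small $k$ at the chosen $\lambda$, which would break the naive Chebyshev bound. This is handled simply by splitting the union bound at $k_0$ and treating the finitely many $k < k_0$ separately: by hypothesis each $\Lambda_k$ is finite in a neighborhood of zero, so for those $k$ one applies Chebyshev at a small $\lambda_0 > 0$, producing a contribution that decays as $O(k_0) \cdot e^{-\lambda_0 \delta K_n}$ and is thus exponentially negligible in $K_n$. As a matter of style, a more compact derivation is also available: one may use that $\sup_t |f(t) - f(t-)| \leq \omega'(f, \delta')$ for any càdlàg $f$ and any $\delta' > 0$ (since every jump is trapped in some partition interval), and then invoke the bound \eqref{eqn:mod_cont_bound} from the proof of Lemma \ref{lem:exp_tightness}, letting $\delta' \to 0$ and using steepness via $\delta' \Lambda^*(\delta/\delta') = \delta \cdot \Lambda^*(\delta/\delta')/(\delta/\delta') \to \infty$. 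I would likely present the direct argument above for self-containedness.
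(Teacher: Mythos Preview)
Your proof is correct and follows essentially the same route as the paper: identify the maximal jump as $\max_{k \le n}|X_k|/K_n$, apply a union bound and exponential Chebyshev, control $\Lambda_k(\pm\lambda)$ via $\max_k c_k = o(K_n)$ and $\log n = O(K_n)$, then send $\lambda \to \infty$. The only cosmetic difference is your treatment of the finitely many small-$k$ terms (Chebyshev at a small $\lambda_0$ rather than the trivial bound $\P \le 1$), which is an inessential variation.
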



\begin{proof}
By definition of $Z_n$ the only jumps are at the times $t_{k,n}$, therefore
\[
\sup_{t \in [0,1]} |Z_n(t) - Z_n(t-)| = \frac{1}{K_n} \max_{1 \leq k \leq n} |X_k|.
\]
Then by the standard union bound 
\[
\P \left( \sup_{t \in [0,1]} |Z_n(t) - Z_n(t-)| > \delta \right) \leq \sum_{k=1}^n \P(|X_k| > \delta K_n). 
\]
Now, for each fixed $\lambda > 0$, there exists an integer $M = M(\lambda)$ such that $\E[e^{\lambda |X_k|}] < \infty$ for all $k \geq M(\lambda)$, by the assumption that $\Lambda_k(\pm \lambda)/c_k$ converges to a finite quantity for each $\lambda$. Therefore on all terms with $k \geq M$ in the last summation we can apply the exponential Chebyshev inequality to obtain
\[
\sum_{k=1}^n \P(|X_k| > \delta K_n) \leq M + \sum_{k=M}^n e^{-\lambda \delta K_n} \E[ e^{\lambda |X_k|}].
\]
Taking logarithms of both sides yields (also using subadditivity of the logarithm)
\[
\frac{1}{K_n} \log \P \left( \sup_{t \in [0,1]} |Z_n(t) - Z_n(t-)| > \delta \right) \leq \frac{1}{K_n} \log M -\delta \lambda + \frac{1}{K_n} \log \sum_{k=M}^n \E[ e^{\lambda |X_k|}].
\]
By the assumption that $c_k^{-1} \Lambda_k(\pm \lambda)$ converges as $k \to \infty$ we have that there exists a constant $A = A(\lambda) > 0$ such that $\log \E[e^{\pm \lambda X_k}] \leq A c_k$ for all $k \geq M$, and therefore
\begin{align*}
\frac{1}{K_n} \log \P \left( \sup_{t \in [0,1]} |Z_n(t) - Z_n(t-)| > \delta \right)
&\leq \frac{1}{K_n} \log M -\delta \lambda + \frac{2}{K_n} \log \sum_{k=M}^n e^{A c_k} \\
&\leq \frac{1}{K_n} \log M -\delta \lambda + \frac{2}{K_n} (A \max_{1 \leq k \leq n} c_k + \log n).
\end{align*}
Now, recalling that $K_n = c_1 + \ldots + c_n$ and that we assumed that $\log n = O(K_n)$ and $\max_{k \leq n} c_k = o(K_n)$, for the last term in the expression above there is a universal constant $C > 0$ such that
\[
\limsup_{n \to \infty} \frac{1}{K_n} \log \P \left( \sup_{t \in [0,1]} |Z_n(t) - Z_n(t-)| > \delta \right) \leq -\delta \lambda + C.
\]
The latter holds for all $\lambda > 0$, so taking $\lambda \to \infty$ completes the proof.
\end{proof}

Note that to handle the case that $\Lambda_k(\lambda) = \infty$ for $k$ small required only mild modifications. In the remaining two lemmas no extra modifications beyond those used above are necessary to handle this complication, so we provide the proofs under the assumption $\Lambda_k(\lambda) < \infty$ for all $k$ and $\lambda$ and let the reader fill in the details. For the full details of how to use Lemma \ref{lem:exp_equivalence} to strengthen the topology of the LDP for $Z_n$ to that of uniform convergence, see \cite[Chapter 4.4]{FK:LD}.


\begin{lemma}\label{lem:sum_LDP_ptwise}
Let $s_1 < s_2 < \ldots s_m$ be a partition in $[0,1]$. Then the process $(Z_n(s_1), \ldots, Z_n(s_m))$ satisfies a large deviations principle in $\R^m$ with speed $K_n$ and rate function
\[
I_{s_1, \ldots, s_m}^Z(\mathbf{x}) = \sum_{j=1}^m (s_j - s_{j-1}) \Lambda^* \left( \frac{\mathbf{x}_j - \mathbf{x}_{j-1}}{s_j - s_{j-1}} \right).
\]
Consequently, $Z_n$ satisfies a LDP with speed $K_n$ and rate function \eqref{eqn:Mogulskii_rate_fcn}, in the topology of uniform convergence.
\end{lemma}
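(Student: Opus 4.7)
The plan is to establish the finite-dimensional LDP first and then lift it to a process-level LDP using the tightness estimates from Lemmas \ref{lem:exp_tightness} and \ref{lem:exp_equivalence}. For the finite-dimensional statement, I would exploit the fact that the increments $Z_n(s_j) - Z_n(s_{j-1}) = K_n^{-1} \sum_{i=k_n(s_{j-1})+1}^{k_n(s_j)} X_i$ are mutually independent across $j$. Hence it suffices to prove an LDP for each individual increment and then combine them by independence; the rate function of the joint distribution is automatically the sum of the marginal rate functions.

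For a single increment I would apply the G\"{a}rtner-Ellis theorem. The scaled log-moment generating function is
\[
\frac{1}{K_n} \log \E \bigl[ e^{\lambda K_n (Z_n(s_j) - Z_n(s_{j-1}))} \bigr] = \frac{1}{K_n} \sum_{i=k_n(s_{j-1})+1}^{k_n(s_j)} \Lambda_i(\lambda),
\]
and Lemma \ref{lem:cesaro}, combined with $K_n^{-1} \sum_{i=k_n(s_{j-1})+1}^{k_n(s_j)} c_i \to s_j - s_{j-1}$ and $\Lambda_i(\lambda)/c_i \to \Lambda(\lambda)$, shows this limit equals $(s_j - s_{j-1}) \Lambda(\lambda)$. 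Differentiability and steepness of $\Lambda$ are inherited by this limit, so G\"{a}rtner-Ellis gives an LDP for the $j$-th increment with rate function $(s_j - s_{j-1}) \Lambda^*((\cdot)/(s_j - s_{j-1}))$. Summing over $j$ produces $I^Z_{s_1, \ldots, s_m}$ as claimed.

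For the process-level statement I would follow the projective-limit (Dawson-G\"{a}rtner) route of \cite[Chapter 4.7]{FK:LD}. Exponential tightness from Lemma \ref{lem:exp_tightness} together with the finite-dimensional LDP yields a full LDP in the Skorohod topology, and the $C$-exponential tightness from Lemma \ref{lem:exp_equivalence} then upgrades the topology to uniform convergence without changing the rate function. What remains is to identify the Dawson-G\"{a}rtner rate function, which is the supremum over finite partitions of
\[
\sum_{j=1}^m (s_j - s_{j-1}) \Lambda^* \left( \frac{g(s_j) - g(s_{j-1})}{s_j - s_{j-1}} \right),
\]
as the Mogulskii expression \eqref{eqn:Mogulskii_rate_fcn}. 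This is a standard convex-analysis computation (see \cite[Lemma 5.1.6]{DZ:LD}): the supremum is finite precisely when $g$ is absolutely continuous with $g(0) = 0$, in which case Jensen's inequality and the definition of the Riemann integral identify it with $\int_0^1 \Lambda^*(g'(t)) \, dt$.

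The main obstacle I anticipate is technical rather than conceptual: carefully verifying that the asymptotics of the non-identically-distributed sums match those of the i.i.d.\ Mogulskii setting at every step, in particular checking that the assumption $\log n = O(K_n)$ together with $\max_{k \leq n} c_k = o(K_n)$ is sufficient to run the projective-limit argument cleanly. Fortunately the Stolz-Ces\`{a}ro device of Lemma \ref{lem:cesaro} packages this type of replacement efficiently and should allow each step of the classical Mogulskii proof to go through with only cosmetic modifications.
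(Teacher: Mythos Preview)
Your proposal is correct and follows essentially the same route as the paper: compute the limiting scaled cumulant generating function of the increments via Lemma~\ref{lem:cesaro}, invoke G\"{a}rtner--Ellis for the finite-dimensional LDP, then pass to the process level using the tightness Lemmas~\ref{lem:exp_tightness} and~\ref{lem:exp_equivalence} together with the Feng--Kurtz machinery, and finally identify the rate function with \eqref{eqn:Mogulskii_rate_fcn} by the standard argument in \cite{DZ:LD}. The only cosmetic difference is that the paper applies G\"{a}rtner--Ellis once in $\R^m$ to the full vector of increments (and then maps back to $(Z_n(s_1),\ldots,Z_n(s_m))$ by the contraction principle), whereas you apply it coordinate-wise and sum the rate functions by independence; these are equivalent since the joint moment generating function factors.
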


\begin{proof}[Proof of Lemma \ref{lem:sum_LDP_ptwise}]
The proof is similar to \cite[Lemma 5.1.6]{DZ:LD}, which proves the same statement for the standard situation of iid random variables. In our situation all that needs to be modified is the proof of the finite-dimensional LDP which we now give. Let $Y_n$ be the random element of $\R^m$ given by
\[
Y_n = (Z_n(s_1), Z_n(s_2) - Z_n(s_1), \ldots, Z_n(s_m) - Z_n(s_{m-1})).
\]
Further, for $n$ large let $0 = t_{k_0,n} < t_{k_1, n} < \ldots < t_{k_m,n} \leq 1$ be the ordered sequence of times that are closest to the $s_j$ in the mesh, in the sense that $s_j \in [t_{k_j,n}, t_{k_{j}+1,n})$ for all $1 \leq j \leq m$. Since the mesh size goes to zero as $n \to \infty$ (by $\max_{k \leq n} c_k = o(K_n)$), this is always possible (with the $t_{k_i,n}$ distinct) for $n$ large enough. Also note that the terms $k_j$ depend on $n$ but we suppress the dependence in the notation. Now writing $Y_n^j = Z_n(s_j) - Z_n(s_{j-1})$ for the coordinates of $Y_n$ we have by definition of $Z_n$ that
\[
Y_n^j = Z_n(s_j) - Z_n(s_{j-1}) = Z_n(t_{k_j,n}) - Z_n(t_{k_{j-1},n+1}) = \frac{1}{K_n} \sum_{i=k_{j-1}+1}^{k_j} X_i.
\]
Therefore, for $(\lambda_1, \ldots, \lambda_m) \in \R^m$, the exponential moment-generating function of the variable $Y_n = (Y_n^1, \ldots, Y_n^m)$ satisfies
\begin{align}\label{eqn:log_gf}
\log \E \exp \left( K_n \sum_{j=1}^m \lambda_j Y_n^j \right) = \sum_{j=1}^m \sum_{i = k_{j-1} + 1}^{k_j} \Lambda_i(\lambda_j).
\end{align}
But by definition of the series $t_{k,n}$ we have that
\[
t_{k_j,n} - t_{k_{j-1},n} = \frac{1}{K_n} \sum_{i=k_{j-1}+1}^{k_j} c_i,
\]
and since the $t_{k_j,n}$ are the closest points to $s_j$ in the mesh, we have that the above converges to $s_j - s_{j-1}$ as $n \to \infty$. Furthermore, by the definition of $\Lambda$ as a limit, we have that $\Lambda_i(\lambda_j)/c_i \to \Lambda(\lambda_j)$ as $i \to \infty$, so by combining these last two facts and Lemma \ref{lem:cesaro}, we have
\[
\lim_{n \to \infty} \frac{1}{K_n} \sum_{i = k_{j-1} + 1}^{k_j} \Lambda_i(\lambda_j) = (s_j - s_{j-1}) \Lambda(\lambda_j).
\]
Combining this with \eqref{eqn:log_gf} we obtain that
\[
\lim_{n \to \infty} \frac{1}{K_n} \log \E \exp \left( K_n \sum_{j=1}^m \lambda_j Y_n^j \right) = \sum_{j=1}^{m} (s_j - s_{j-1}) \Lambda(\lambda_j).
\]
By the assumptions on $\Lambda$ the right hand side is, as a function of $(\lambda_1, \ldots, \lambda_m) \in \R^m$, differentiable, lower semi-continuous, and steep (in any direction going to infinity in $\R^m$). Therefore, by the G\"{a}rtner-Ellis theorem \cite[Theorem 2.3.6]{DZ:LD}, the processes $Y_n$ satisfy an LDP as random elements of $\R^m$, with speed $K_n$ and rate function
\[
I_{s_1, \ldots, s_m}^Y(\mathbf{x}) = \sum_{j=1}^m (s_j - s_{j-1}) \Lambda^* \left( \frac{\mathbf{x}_j}{s_j - s_{j-1}} \right)
\]
for $\mathbf{x} \in \R^m$. Finally, since the mapping $Y_n = (Z_n(s_1), Z_n(s_2) - Z_n(s_1), \ldots, Z_n(s_m) - Z_n(s_{m-1})) \mapsto (Z_n(s_1), \ldots, Z_n(s_m))$ is clearly continuous and bijective, the LDP for $Y_n$ translates into an LDP for $Z_n$ with speed $K_n$ and rate function
\[
I_{s_1, \ldots, s_m}^Z(\mathbf{x}) = \sum_{j=1}^m (s_j - s_{j-1}) \Lambda^* \left( \frac{\mathbf{x}_j - \mathbf{x}_{j-1}}{s_j - s_{j-1}} \right),
\]
by means of the contraction principle \cite[Theorem 4.2.1]{DZ:LD}. This completes the LDP for the processes $(Z_n(s_1), \ldots, Z_n(s_m))$. To extend to the process-level LDP, use the $C$-exponential tightness of Lemma \ref{lem:exp_tightness} and \cite[Theorem 4.30]{FK:LD} to conclude that $Z_n$ satisfies the LDP with good rate function
\[
I(g) = \sup_{\{ s_i \}} I_{s_1, \ldots, s_m}(g(s_1), \ldots, g(s_m)),
\]
where the supremum is over all partitions in $[0,1]$ (for any value of $m$). To conclude that this supremum is equal to the rate function $I(g)$ of \eqref{eqn:Mogulskii_rate_fcn} is a straightforward calculus exercise, see \cite[Corollary 5.1.10]{DZ:LD} for full details.  
\end{proof}

\subsection{LDP for $Z_n$}

Now we apply the results of the last section to the orthogonal polynomials. The next lemma verifies the assumptions of Theorem \ref{thm:sum_LDP} for the specific partial sum process of \eqref{eqn:Zn1}, and identifies the corresponding rate functions.

\begin{lemma}\label{lem:Verblunkies_good}
Assume that under the measure $Q$, the Verblunsky coefficients $\alpha_k$ are independent and rotation invariant with the usual second moment assumption \eqref{eq:2ndmoment}. Further assume that for some $\epsilon > 0$ and for all $\kappa > 0$, the radial parts satisfy
\[
\E[|\alpha_k|^3] = O(k^{-1-\epsilon}), \quad \limsup_{k \to \infty} \E[(1 - |\alpha_k|)^{-\kappa}] < \infty.
\]
Then $X_k = \log P_{\D}(\gamma_k, 1)$ satisfies the hypotheses of Theorem \ref{thm:sum_LDP} with $c_k = 1/k$ and
\[
\Lambda^*(x) = \frac{\beta}{8} \left( x + \frac{2}{\beta} \right)^2.
\]
Furthermore, under $Q_0$, the same $X_k$ satisfy the hypotheses also with $c_k = 1/k$ and rate function 
\[
\Lambda^*(x) - x = \frac{\beta}{8} \left( x- \frac{2}{\beta} \right)^2.
\]
\end{lemma}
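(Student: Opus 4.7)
The plan is to verify the four hypotheses of Theorem~\ref{thm:sum_LDP} for $X_k = \log P_\D(\gamma_k, 1)$ with $c_k = 1/k$, under both $Q$ and $Q_0$, and then compute the Fenchel--Legendre transforms explicitly. The growth conditions are immediate since $K_n = \sum_{k=1}^n 1/k \sim \log n$, so $\log n = O(K_n)$ and $\max_{k \leq n} c_k = 1 = o(\log n)$. For the finiteness of $\Lambda_k(\lambda) = \log \E[e^{\lambda X_k}]$, the elementary two-sided bound
\[
\frac{1-|\gamma_k|}{4} \leq P_\D(\gamma_k,1) \leq \frac{4}{1-|\gamma_k|},
\]
obtained from $(1-|\gamma_k|)^2 \leq |1-\gamma_k|^2 \leq 4$ together with $1 - |\gamma_k|^2 \leq 2(1-|\gamma_k|)$, gives $\E[e^{\lambda X_k}] \leq 4^{|\lambda|}\,\E[(1-|\gamma_k|)^{-|\lambda|}]$. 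Combined with Lemma~\ref{lemma:KS_modified_Verblunsky_law} (which gives $|\gamma_k| \overset{d}{=} |\alpha_k|$ under $Q$) and the hypothesis $\limsup_k \E[(1-|\alpha_k|)^{-\kappa}] < \infty$ for every $\kappa > 0$, this yields $\Lambda_k^Q(\lambda) < \infty$ for every $\lambda \in \R$ and $k$ sufficiently large.

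The core step is the computation of $\Lambda^Q(\lambda) := \lim_k k \Lambda_k^Q(\lambda)$. The method is to combine the Taylor expansion \eqref{eqn:log_Poisson_expansion} with $e^y = 1 + y + y^2/2 + O(y^3)$ to obtain, on the event $\{|\gamma_k| \leq 1/2\}$,
\[
e^{\lambda X_k} = 1 + 2\lambda \Re \gamma_k - 2\lambda(\Im \gamma_k)^2 + 2\lambda^2 (\Re\gamma_k)^2 + O(|\gamma_k|^3),
\]
with implicit constants depending on $\lambda$ but not on $k$. Taking expectations and using the rotational invariance of $\gamma_k$ under $Q$, so that $\E[\Re \gamma_k] = 0$ and $\E[(\Re\gamma_k)^2] = \E[(\Im\gamma_k)^2] = \E[|\gamma_k|^2]/2$, along with $\E[|\gamma_k|^2] \sim 2/(\beta k)$ and $\E[|\gamma_k|^3] = O(k^{-1-\eps})$, gives $\E[e^{\lambda X_k}\one{|\gamma_k|\leq 1/2}] = 1 + (\lambda^2 - \lambda)\cdot 2/(\beta k) + o(1/k)$. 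The complementary event is controlled by H\"older: Markov yields $\P(|\gamma_k| > 1/2) \leq 8\E[|\gamma_k|^3] = O(k^{-1-\eps})$, while by the finiteness bound above $\E[e^{p \lambda X_k}]^{1/p}$ is uniformly bounded in $k$ for any fixed $p$, so choosing $p$ sufficiently large (so that $(1+\eps)/q > 1$ with $1/p + 1/q = 1$) makes the contribution $o(1/k)$. Taking logarithms yields $\Lambda^Q(\lambda) = 2\lambda(\lambda-1)/\beta$, which is quadratic, hence differentiable with derivative diverging at $\pm\infty$, i.e.\ steep.

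The $Q_0$ case follows directly from Theorem~\ref{theorem:markov_chain_description}: since $dQ_0(\gamma_k) = P_\D(\gamma_k,1)\,dQ(\gamma_k)$ on each coordinate, we have $\E_{Q_0}[e^{\lambda X_k}] = \E_Q[P_\D(\gamma_k,1)^{\lambda+1}]$ and hence $\Lambda_k^{Q_0}(\lambda) = \Lambda_k^Q(\lambda+1)$, giving $\Lambda^{Q_0}(\lambda) = 2\lambda(\lambda+1)/\beta$, again quadratic and steep. For the Fenchel--Legendre transforms, the optimizer of $\lambda x - \Lambda^Q(\lambda)$ is $\lambda^* = (\beta x + 2)/4$, yielding $\Lambda^*(x) = \beta(x + 2/\beta)^2/8$. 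The shift identity $\Lambda^{Q_0}(\lambda) = \Lambda^Q(\lambda+1)$ yields $(\Lambda^{Q_0})^*(x) = \sup_\mu\{(\mu-1)x - \Lambda^Q(\mu)\} = \Lambda^*(x) - x$ by the substitution $\mu = \lambda+1$, and direct expansion confirms this equals $\beta(x - 2/\beta)^2/8$.

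The main obstacle is ensuring that the Taylor expansion is precise enough uniformly in $k$ so that the error term is genuinely $o(1/k)$ rather than merely $O(1/k)$. This is the reason for imposing the stronger hypothesis $\E[|\alpha_k|^3] = O(k^{-1-\eps})$ rather than the weaker $o(k^{-1})$ used in the almost sure result of Theorem~\ref{th:exactdim}: the polynomial tail decay, combined via H\"older with the $(1-|\alpha_k|)^{-\kappa}$ moment bound, produces the required $o(1/k)$ remainder on $\{|\gamma_k|>1/2\}$ and simultaneously dominates the cubic remainder in the Taylor expansion on its complement.
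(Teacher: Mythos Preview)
Your proof is correct and follows essentially the same approach as the paper: the Taylor expansion of $P_\D(\gamma_k,1)^\lambda$ on $\{|\gamma_k|\leq 1/2\}$, the H\"older/Markov bound on the complement using the negative moment hypothesis, and the shift $\Lambda_k^{Q_0}(\lambda)=\Lambda_k^Q(\lambda+1)$ for the $Q_0$ case are all identical in spirit. You add a few details the paper leaves implicit---the explicit verification of the growth conditions $(a)$, the two-sided Poisson bound $(1-|\gamma_k|)/4\leq P_\D\leq 4/(1-|\gamma_k|)$ to handle both signs of $\lambda$ at once when checking finiteness of $\Lambda_k$, and the explicit Fenchel--Legendre computation with the shift identity---but the architecture is the same.
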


\begin{proof}
First note that rotation invariance of the $\alpha_k$ implies that the modified Verblunskies $\gamma_k$ are independent (Lemma \ref{lemma:KS_modified_Verblunsky_law}), but that is all we use the rotation invariance for. After that we only need to check that for each $\lambda \in \R$ the limit
\[
\lim_{k \to \infty} \frac{1}{c_k} \log \E[\exp \left \{ \lambda \log P_{\D}(\gamma_k, 1) \right \}] = \lim_{k \to \infty} k \log \E[P_{\D}(\gamma_k, 1)^{\lambda}] =: \Lambda(\lambda)
\]
exists and is finite, and that the resulting $\Lambda$ is differentiable and steep. First work on the event that $|\alpha_k| \leq 1/2$, where the truncation allows us to use the expansion \eqref{eqn:log_Poisson_expansion} to write
\[
P_{\D}(\gamma_k, 1)^{\lambda} = 1 + 2 \lambda \Re \gamma_k - 2 \lambda (\Im \gamma_k)^2 + 2 \lambda^2 (\Re \gamma_k)^2 + O(|\gamma_k|^3),
\]
and the constants in the $O$ term are uniform in $\alpha_k$. We already saw in Section \ref{sec:bounded_Ver} that
\[ 
\E[|\gamma_k|^2 \1{|\gamma_k| \leq 1/2}] = \frac{2}{\beta k} + o(k^{-1}), 
\]
so that we may take an expectation of the above asymptotic expression to obtain
\begin{align}\label{eqn:poisson_small}
\E[P_{\D}(\gamma_k, 1)^{\lambda} \1{|\gamma_k| \leq 1/2}] = 1 + \lambda (\lambda - 1) \frac{2}{\beta k} + o(k^{-1-\epsilon}).
\end{align}
Now for the case $|\alpha_k| \geq 1/2$ begin with the estimate
\[
P_{\D}(z,1) = 2 \Re \frac{1+z}{1-z} \leq \frac{1 + |z|}{1 - |z|} \leq \frac{2}{1-|z|}.
\]
Let $q \in (1,1+\epsilon)$ and $p$ satisfy $1/p + 1/q = 1$. Then from the bound above and a combination of H\"{o}lder's and Markov's inequality,
\begin{align*}
\E[P_{\D}(\gamma_k,1)^{\lambda} \1{|\gamma_k| \geq 1/2}] &\leq C \E[(1 - |\gamma_k|)^{-\lambda} \1{|\gamma_k| \geq 1/2}] \\
& \leq C \E[(1- |\gamma_k|)^{-\lambda p}]^{1/p} \P(|\gamma_k| > 1/2)^{1/q} \\
&\leq C \E[(1 - |\gamma_k|)^{-\lambda p}]^{1/p} \E[|\gamma_k|^3]^{1/q} \\
&\leq C \E[(1 - |\gamma_k|)^{-\lambda p}]^{1/p} O(k^{-(1+\epsilon)/q}),
\end{align*}
the last inequality following by the assumption on $\E[|\gamma_k|^3]$, where $C$ is a constant independent of $k$. Therefore by the negative moment assumption on $(1-|\gamma_k|)$ and the fact that $(1+\epsilon)/q > 1$ we have
\begin{align}\label{eqn:poisson_big}
\limsup_{k \to \infty} k \E[P_{\D}(\gamma_k,1)^{\lambda} \1{|\gamma_k| \geq 1/2}] = 0.
\end{align}
Combining \eqref{eqn:poisson_big} with \eqref{eqn:poisson_small} we therefore obtain
\[
\Lambda(\lambda) = \lim_{k \to \infty} k \log \E[P_{\D}(\gamma_k, 1)^{\lambda}] = \frac{2}{\beta} \lambda(\lambda-1).
\]
This $\Lambda$ is clearly differentiable and steep, and it is straightforward to verify that its Legendre-Fenchel transform is $\Lambda^*$ as given. Finally, to prove the statement under $Q_0$, recall that Theorem \ref{theorem:markov_chain_description} implies that the $\gamma_k$ are also independent under $Q_0$, and the fact that the Radon-Nikodym derivative between $Q$ and $Q_0$ (for $\gamma_k$) is $P_{\D}(\gamma_k, 1)$ gives that
\[
\lim_{k \to \infty} k \E_{Q_0} [P_{\D}(\gamma_k, 1)^{\lambda}] = \lim_{k \to \infty} k \E_Q [P_{\D}(\gamma_k, 1)^{\lambda + 1}] = \Lambda(\lambda + 1)
\]
Therefore Theorem \ref{thm:sum_LDP} also applies under $Q_0$ with rate function as given.
\end{proof}

\subsection{LDP for $\Upsilon_n$}

To obtain the LDP for $\Upsilon_n$ we write it as a function of the process $Z_n$ and then apply the contraction principle. Recall that $Z_n(t)$ is defined by \eqref{eqn:Zn1}, which can be rewritten as
\[
|\varphi_{k_n(t)}|^{-2} = n^{Z_n(t)}, \quad t \in [0,1].
\]
Now recall the definition of the norm $\|\varphi_{\cdot}\|_n^2$ as
\[
\|\varphi_{\cdot}(1)\|_n^2 = \sum_{k=0}^n |\varphi_k(1)|^2.
\]
In the remainder of this section it will be convenient to start the sum at $k=1$ instead. Clearly, this will have no effect on the large deviations principle. Now using that $k_n(t) = k$ on each interval $t \in [t_{k-1,n}, t_{k,n})$ and $t_{k,n} - t_{k-1,n} = c_k = 1/k$ in our case, we can rewrite the norm as 
\begin{align*}
\|\varphi_{\cdot}(1)\|_n^2 = \sum_{k=1}^n \frac{1}{c_k} \int_{t_{k-1,n}}^{t_{k,n}} n^{-Z_n(s)} \ds 
= K_n \sum_{k=1}^n k \int_{t_{k-1,n}}^{t_{k,n}} n^{-Z_n(s)} \ds 
&= K_n \int_0^1 k_n(s) n^{-Z_n(s)} \ds.
\end{align*}
Now by definition of $k_n(s)$ we have
\[
k_n(s) = \min \left \{ k \geq 1 : \frac{1}{K_n} \sum_{i=1}^k \frac1i \geq s \right \},
\]
from which it follows by the asymptotics of the harmonic series that $k_n(s) = C(s) n^{s + O(1/n)}$, where $C(s)$ is a non-random, positive, continuous function on $[0,1]$ that is strictly bounded away from zero. Therefore we have
\[
\|\varphi_{\cdot}(1)\|_n^2 \sim K_n \int_0^1 n^{s - Z_n(s)} \, ds, \quad n \to \infty,
\]
so it is enough to prove the LDP for
\[
\frac{1}{\log n}\log \int_0^1 n^{s - Z_n(s)} \, ds,
\]
the latter using that $K_n \sim \log n$. By Laplace's principle we expect that
\[
\frac{1}{\log n}\log \int_0^1 n^{s - Z_n(s)} \, ds \sim \max_{s \in [0,1]} \left \{ s-Z_n(s) \right \},
\]
as $n \to \infty$. In Lemma \ref{lem:max_int_exp_equivalence} we will show that these two sequences are close enough to be exponentially equivalent, meaning that if one satisfies an LDP then the other satisfies the same LDP. Therefore to complete the proof of Theorem \ref{thm:norm_LD} it will be enough to prove an LDP for the sequence
\[
\max_{s \in [0,1]} \left \{ s - Z_n(s) \right \},
\]
which we do via Theorem \ref{thm:sum_LDP} and the contraction principle.

\begin{lemma}\label{lem:max_LDP}
The sequence $\max_{0 \leq s \leq 1}\{s - Z_n(s)\}$ satisfy an LDP with speed $K_n$ and rate function $\Lambda^*(1-x)$.
\end{lemma}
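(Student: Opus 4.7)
The plan is to apply the contraction principle to the process-level LDP for $Z_n$ established via Lemma \ref{lem:Verblunkies_good} and Theorem \ref{thm:sum_LDP}. That LDP holds in $\D([0,1])$ with the topology of uniform convergence, with good rate function
\[
I(g) = \int_0^1 \Lambda^*(g'(t)) \, dt
\]
for absolutely continuous $g$ with $g(0) = 0$, and $I(g) = +\infty$ otherwise. Consider the functional $F : \D([0,1]) \to \R$ defined by $F(g) = \max_{s \in [0,1]}\{s - g(s)\}$. Since $|F(g) - F(h)| \leq \|g - h\|_\infty$, the map $F$ is $1$-Lipschitz in the uniform norm and hence continuous in the topology of the LDP. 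By the contraction principle \cite[Theorem 4.2.1]{DZ:LD}, the sequence $F(Z_n) = \max_{s \in [0,1]}\{s - Z_n(s)\}$ satisfies a LDP with speed $K_n$ and rate function
\[
\tilde{I}(x) = \inf\{I(g) : g(0) = 0, \; g \text{ abs. cont., } F(g) = x \}.
\]

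It remains to identify $\tilde I(x)=\Lambda^*(1-x)$. For the upper bound $\tilde I(x)\leq\Lambda^*(1-x)$, note that any $g$ with $g(0) = 0$ satisfies $F(g) \geq 0$, so the infimum is taken over $x \geq 0$. Take the linear function $g(s) = (1-x)s$: it is absolutely continuous, vanishes at $0$, and $s - g(s) = xs$ attains its maximum $x$ at $s=1$. Moreover $I(g) = \int_0^1 \Lambda^*(1-x)\,dt = \Lambda^*(1-x)$, which gives the upper bound.

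For the matching lower bound, fix any admissible $g$ and let $s^*\in[0,1]$ be a point where the maximum is attained, so that $g(s^*) = s^* - x$ and $g(s) \geq s - x$ for all $s\in[0,1]$. In particular $g(1) \geq 1 - x$, equivalently the average slope $a := (g(1) - g(s^*))/(1-s^*)$ on $[s^*, 1]$ satisfies $a \geq 1$ (when $s^* < 1$; the case $s^* = 1$ is handled directly). Applying Jensen's inequality to the convex function $\Lambda^*$ separately on $[0, s^*]$ and $[s^*, 1]$ yields
\[
\int_0^1 \Lambda^*(g'(t))\, dt \geq s^* \Lambda^*\!\left(1 - \tfrac{x}{s^*}\right) + (1-s^*)\Lambda^*(a).
\]
Since $\Lambda^*$ attains its unique minimum at $-2/\beta < 1$ and is nondecreasing on $[-2/\beta, \infty)$, the right side is minimized over $a \geq 1$ at $a = 1$. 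One more application of convexity of $\Lambda^*$ to the convex combination with weights $s^*, 1-s^*$ gives
\[
s^* \Lambda^*\!\left(1 - \tfrac{x}{s^*}\right) + (1-s^*)\Lambda^*(1) \geq \Lambda^*\!\left(s^*(1-x/s^*) + (1-s^*)\cdot 1\right) = \Lambda^*(1-x),
\]
which yields $\tilde I(x) \geq \Lambda^*(1-x)$ and completes the identification. The main (mild) obstacle is handling the possibility that the argmax $s^*$ is an interior point rather than $s^* = 1$; the two-piece Jensen argument combined with the monotonicity properties of $\Lambda^*$ to the right of its minimum resolves this cleanly.
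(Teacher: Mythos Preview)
Your proof is correct and follows essentially the same route as the paper: apply the contraction principle to the continuous functional $g\mapsto\max_s\{s-g(s)\}$, take the linear test function $g(s)=(1-x)s$ for the upper bound, and for the lower bound split at the argmax $s^*$, apply Jensen on each piece, use that the average slope on $[s^*,1]$ is at least $1$ together with monotonicity of $\Lambda^*$ to the right of its minimum, and finish with one more convexity step. The only cosmetic difference is that you invoke the precise location of the minimum of $\Lambda^*$ at $-2/\beta$, whereas the paper just uses that $\Lambda^*$ is increasing on $[1,\infty)$; either suffices.
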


\begin{proof}
Since the mapping $g \to \max_{0 \leq s \leq 1} \{s - g(s)\}$ is continuous from $C_0([0,1])$ to $\R$, it follows by Theorem \ref{thm:sum_LDP} and the contraction principle that $\max_{0 \leq s \leq 1} \{s - Z_n(s)\}$ satisfies the LDP with speed $K_n$ and rate function
\[
J(x) = \inf \left \{ \int_0^1 \Lambda^*(\phi'(t)) \dt : \max_{0 \leq s \leq 1} \{s - \phi(s)\} = x \right \}.
\]
As $\phi(0) = 0$, there is no solution for $x < 0$, and thus $J(x) = \infty$. We therefore now assume that $x \geq 0$. By taking $\phi(s) = s(1-x)$ we clearly get that $J(x) \leq \Lambda^*(1-x)$. However, this $\phi$ is also the minimizer. To see this, let us assume that $s - \phi(s)$ has its maximum at $s_0 \in (0,1)$ (with trivial modifications if $s_0 \in \{0,1\}$), so in particular $s_0 - \phi(s_0) = x$. The convexity of $\Lambda^*$ and Jensen's inequality then give
\begin{align*}
\int_0^1 \Lambda^*(\phi'(s)) \ds & = \int_0^{s_0} \Lambda^*(\phi'(s)) \ds + \int_{s_0}^1 \Lambda^*(\phi'(s)) \ds \\
& \geq s_0 \: \Lambda^* \left ( \frac{1}{s_0} \int_0^{s_0} \phi'(s) \ds \right ) + (1 - s_0) \: \Lambda^* \left ( \frac{1}{1 - s_0} \int_{s_0}^1 \phi'(s) \ds \right ) \\
& = s_0 \: \Lambda^* \left ( 1 - \frac{x}{s_0} \right ) + (1 - s_0) \: \Lambda^* \left ( \frac{1}{1 - s_0} (\phi(1) - \phi(s_0)) \right ).
\end{align*}
Now note that
\[
\frac{1}{1 - s_0} (\phi(1) - \phi(s_0)) \geq \frac{1}{1 - s_0} ( 1 - x - (s_0 - x)) = 1,
\]
and that $\Lambda^*$ (as as well as $\Lambda^*(x) - x$, for $\b > 2$, when we work under $Q_0$) is increasing on $[1,\infty)$. From this fact, the previous computations, and the convexity of $\Lambda^*$, we therefore deduce that
\[
\int_0^1 \Lambda^*(\phi'(s)) \ds \geq s_0 \: \Lambda^* \left ( 1 - \frac{x}{s_0} \right ) + (1 - s_0) \: \Lambda^*(1) \geq \Lambda^* \left ( s_0 \left ( 1 - \frac{x}{s_0} \right ) + 1 - s_0 \right ) =  \Lambda^* (1-x),
\]
which shows that $\phi(s) = s(1-x)$ is indeed the minimizer of the functional, and thus
\[
J(x) = \int_0^1 \Lambda^*(1-x),
\]
as we wanted to show.
\end{proof}

\begin{lemma}\label{lem:max_int_exp_equivalence}
The sequences $\log \int_0^1 n^{s - Z_n(s)} \ds/\log n$ and $\max_{s \in [0,1]} \{s - Z_n(s)\}$ are exponentially equivalent, meaning that for every $\delta > 0$,
\[
\limsup_{n \to \infty} \frac{1}{K_n} \log \P \left( \left| \frac{1}{\log n} \log \int_0^1 n^{s - Z_n(s)} \, ds - \max_{s \in [0,1]} \{ s - Z_n(s) \} \right| > \delta \right) = -\infty.
\]
\end{lemma}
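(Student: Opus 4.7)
The upper bound is essentially free: bounding the integrand by its supremum gives $\int_0^1 n^{s-Z_n(s)}\,ds \leq n^{\max_s\{s-Z_n(s)\}}$, so
\[
\frac{1}{\log n}\log\int_0^1 n^{s-Z_n(s)}\,ds \;\leq\; \max_{s\in[0,1]}\{s-Z_n(s)\}
\]
holds on every sample path. It therefore suffices to establish the matching lower bound up to exponential equivalence at speed $K_n$.

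The idea is a pathwise Laplace estimate, controlling the variation of $Z_n$ in a small neighborhood of an approximate maximizer. Fix $\eps > 0$ and a small $\rho \in (0,1/2]$ to be chosen later. Let $M_n := \max_{s\in[0,1]}\{s-Z_n(s)\}$ and pick $s^* \in [0,1]$ with $s^* - Z_n(s^*) \geq M_n - \eps$ (possible since $Z_n$ is cadlag). On the event $E_n(\rho,\eps) := \{\omega(Z_n,\rho) \leq \eps\}$, where $\omega$ is the genuine uniform modulus of continuity, the identity
\[
s - Z_n(s) \;=\; \bigl(s^*-Z_n(s^*)\bigr) + (s-s^*) + \bigl(Z_n(s^*)-Z_n(s)\bigr)
\]
yields $s - Z_n(s) \geq M_n - 2\eps - \rho$ for every $s \in [0,1]$ with $|s-s^*| \leq \rho$. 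Integrating over whichever of $[s^*-\rho, s^*]\cap[0,1]$ or $[s^*, s^*+\rho]\cap[0,1]$ lies in $[0,1]$ (at least one has length $\geq \rho$ since $\rho \leq 1/2$) gives $\int_0^1 n^{s-Z_n(s)}\,ds \geq \rho\, n^{M_n - 2\eps - \rho}$, so choosing $\rho \leq \eps$ and $n$ large,
\[
\frac{1}{\log n}\log\int_0^1 n^{s-Z_n(s)}\,ds \;\geq\; M_n - 4\eps
\]
on $E_n(\rho,\eps)$.

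What remains is to show that $\P(E_n(\rho,\eps)^c)$ decays faster than $e^{-M K_n}$ for arbitrary $M<\infty$, once $\rho = \rho(\eps,M)$ is chosen sufficiently small. Because the $\omega'$ of Lemma \ref{lem:exp_tightness} is an infimum over partitions of mesh $\geq \rho$ of the supremum on \emph{closed} partition intervals, any two points within distance $\rho$ lie in the same or in adjacent intervals and so are comparable through a common endpoint; a triangle-inequality argument gives the pathwise bound $\omega(Z_n,\rho) \leq 2\,\omega'(Z_n,\rho)$. Combining this with \eqref{eqn:mod_cont_bound} in the proof of Lemma \ref{lem:exp_tightness} yields
\[
\limsup_{n\to\infty}\frac{1}{K_n}\log\P\bigl(E_n(\rho,\eps)^c\bigr) \;\leq\; -\rho\min_{\pm\lambda}\Lambda^*\!\bigl(\pm \eps/(2\rho)\bigr),
\]
and the quadratic form of $\Lambda^*$ established in Lemma \ref{lem:Verblunkies_good} ensures that $\rho\,\Lambda^*(\eps/(2\rho))$ diverges like $\eps^2/\rho$ as $\rho \downarrow 0$. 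Picking $\rho$ small drives the right side below $-M$, and letting $M \to \infty$ finishes the proof. The only genuine subtlety is bookkeeping — the boundary case $s^* \in \{0,1\}$, which is resolved by the one-sided neighborhood choice above, and trading off $\rho$ against $\eps$, which succeeds thanks to the superlinear growth of $\Lambda^*$.
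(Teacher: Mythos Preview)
Your proof is correct and follows essentially the same strategy as the paper: the trivial upper bound from bounding the integrand by its supremum, then a pathwise Laplace lower bound controlled by the (uniform) modulus of continuity of $Z_n$, with the bad event handled via the estimate \eqref{eqn:mod_cont_bound} from Lemma~\ref{lem:exp_tightness}. The paper packages the Laplace step as a general inequality for bounded measurable $f$ (their display \eqref{eqn:bound4}), while you work directly with an approximate maximizer $s^*$; these amount to the same thing. One small remark: you appeal to the explicit quadratic form of $\Lambda^*$ from Lemma~\ref{lem:Verblunkies_good} to get $\rho\,\Lambda^*(\eps/(2\rho)) \to \infty$, whereas the paper relies only on the steepness of $\Lambda$, which already guarantees this divergence and keeps the argument at the generality of Theorem~\ref{thm:sum_LDP}.
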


\begin{proof}
Throughout we will write $Y_n(s) = s - Z_n(s)$. We will use the continuity properties of $Y_n$ to prove the equivalence, along the following easily derived bounds for Laplace's principle: if $f$ is a bounded, measurable function on $[0,1]$ and if
\[
\sup_{|s-t| < \delta} |f(s) - f(t)| < \epsilon
\]
for some particular $\epsilon > 0$ and $\delta \in (0,1/2)$, then
\begin{align}\label{eqn:bound4}
0 \leq \underset{s \in [0,1]}{\mathrm{ess~sup}} f(s) - \frac{1}{\log n} \log \int_0^1 n^{f(s)} \ds \leq \epsilon - \frac{\log \delta}{\log n}.
\end{align}
Of course the lower bound above is obvious. Now recall the modulus of continuity $\omega'$ from Lemma \ref{lem:exp_tightness}, and note that by its definition and that of $Y_n$ we have $\omega'(Y_n, \delta) \leq \omega'(Z_n) + \delta$. Then for fixed $\epsilon > 0$ and $\delta \in (0, \epsilon/2)$ we have, by \eqref{eqn:bound4}, that
\begin{align*}
\P \left( \underset{s \in [0,1]}{\mathrm{ess~sup}}\, Y_n(s) - \frac{1}{\log n} \log \int_0^1 n^{Y_n(s)} \ds > \epsilon - \frac{\log \delta}{\log n} \right) & \leq \P \left( \sup_{|s-t| < \delta} |Y_n(s) - Y_n(t)| > \epsilon \right) \\
& \leq \P \left( \omega'(Y_n, \delta) > \epsilon \right) \\
& \leq \P \left( \omega'(Z_n, \delta) + \delta > \epsilon \right) \\
& \leq \P \left( \omega'(Z_n, \delta) > \epsilon/2 \right).
\end{align*}
The second inequality follows by definition of $\omega'$. Therefore, for all $n$ sufficiently large such that $-\log \delta/\log n < \epsilon$,
\[
\frac{1}{K_n} \log \P \left( \underset{s \in [0,1]}{\mathrm{ess~sup}}\, Y_n(s) - \frac{1}{\log n} \log \int_0^1 n^{Y_n(s)} \ds > 2\epsilon \right)  \leq \frac{1}{K_n} \log \P(\omega'(Z_n, \delta) > \epsilon/2).
\]
Then take limsup of both sides as $n \to \infty$ and use that
\[
\lim_{\delta \to 0} \limsup_{n \to \infty} \frac{1}{K_n} \log \P(\omega'(Z_n, \delta) > \epsilon/2) = 0,
\]
as proved in \eqref{eqn:mod_cont_bound} of Lemma \ref{lem:exp_tightness} to conclude. 
\end{proof}


\end{document}